\newcommand{\EE}{{\mathbb E}}
\newcommand\norm[1]{\lVert#1\rVert}
\theoremstyle{plain}
\newtheorem{theorem}{Theorem}
\newtheorem{lemma}{Lemma}
\newtheorem{cor}{Corollary}
\theoremstyle{definition}
\begin{document}

\title[Lower error bounds for strong approximation of SDEs with discontinuous drift]{Sharp lower error bounds for strong approximation of SDEs with piecewise Lipschitz continuous drift coefficient}

\author[Ellinger]
{Simon Ellinger}
\address{
Faculty of Computer Science and Mathematics\\
University of Passau\\
Innstrasse 33 \\
94032 Passau\\
Germany} \email{simon.ellinger@uni-passau.de}

\begin{abstract}
We study pathwise approximation of strong solutions of scalar stochastic differential equations (SDEs) at a single time in the presence of discontinuities of the drift coefficient. Recently, it has been shown by Müller-Gronbach and Yaroslavtseva (2022) that for all $p \in [1, \infty)$ a transformed Milstein-type scheme reaches an $L^p$-error rate of at least $3 \slash 4$ when the drift coefficient is a piecewise Lipschitz-continuous function with a piecewise Lipschitz-continuous derivative and the diffusion coefficient is constant. It has been proven by Müller-Gronbach and Yaroslavtseva (2023) that this rate $3 \slash 4$ is optimal if one additionally assumes that the drift coefficient is bounded, increasing and has a point of discontinuity. While boundedness and monotonicity of the drift coefficient are crucial for the proof of the matching lower bound from Müller-Gronbach and Yaroslavtseva (2023), we show that both conditions can be dropped. For the proof we apply a transformation technique which was so far only used to obtain upper bounds.
\end{abstract}
\maketitle

\section{INTRODUCTION}
In this paper we study strong approximation of a scalar autonomous stochastic differential equation (SDE)
\begin{flalign} 
\begin{aligned}
 d X_t &= \mu(X_t) \: dt + \sigma(X_t) \: dW_t, \quad t \in [0,1],
\\ X_0 &= x_0,
\end{aligned} \label{eqn:introduction:SDE}
\end{flalign}
where $x_0 \in \mathbb{R}$, $\mu \colon \mathbb{R} \rightarrow \mathbb{R}$ is the drift coefficient and $\sigma \colon \mathbb{R} \rightarrow \mathbb{R}$ is the diffusion coefficient. We derive sharp lower bounds for the $L^p$-error of any method which approximates the solution $X$ at the time $1$ based on finitely many evaluations of the driving Brownian motion $W$ in the case when $\mu$ is piecewise Lipschitz-continuous with piecewise Lipschitz-continuous derivative and $\sigma = 1$.

$L^p$-error rates for strong approximation of SDEs with a discontinuous drift coefficient are only available since about five years. Mainly, the performance of the Euler-scheme and a tamed version hereof was studied up to now, see~\cite{DG18, GLN17, HalidiasKloeden2008,HuGan2022,  LS16,  LS15b, LS18, MGSY22, MGY20, MGY19b, NS19, NSS19, Tag16, Tag2017b, Tag2017a, PS19, Spendier2022}. Here, the most far going result states that the Euler-scheme still achieves the classical $L^2$-error rate of at least $1/2$ if $\mu$ is measurable and bounded and if $\sigma = 1$, see~\cite{DG18}. Furthermore, this rate can be improved if one allows more regularity in the drift coefficient. For instance, in~\cite{NS19} it is shown that the Euler-scheme converges with an $L^2$-error rate of at least $\min\{3 \slash 4, (1 + \kappa) \slash 2\}$ if $\mu$ has $\kappa$-Sobolev-Slobodeckij regularity with $\kappa \in (0,1)$ and $\sigma = 1$. 

In the present paper, we assume that $\sigma = 1$ and $\mu$ satisfies the piecewise smoothness condition
\begin{flalign*}
&(\mu1) \text{\quad There exist a natural number $k \in \mathbb{N}$ as well as $-\infty = \xi_0 < \xi_1 < \dots < \xi_k < \xi_{k+1}=\infty$} &
\\ &\text{\quad\quad\quad such that $\mu$ is Lipschitz-continuous on $(\xi_{i-1}, \xi_i)$ for all $i \in \{1, \dots, k+1\}$,}&
\\
&(\mu2) \text{\quad $\mu$ is differentiable on the interval $(\xi_{i-1}, \xi_i)$ with Lipschitz-continuous derivative}&
\\ &\text{\quad\quad\quad for all $i \in \{1, \dots, k+1\}$.}&
\end{flalign*}
In this case it is known from~\cite{MGY19b} that an $L^p$-error rate $3/4$ for the approximation of $X_1$ using finitely many evaluations of the Brownian motion can be achieved since there exists a sequence of measurable mappings $(g_n)_{n \in \mathbb{N}}$ with $g_n \colon \mathbb{R} \rightarrow \mathbb{R}$ such that for any $p \geq 1$ there exists a constant $C_p > 0$ such that for all $n \in \mathbb{N}$ it holds
\begin{flalign}
\big[\EE [|X_1 - g_n(W_{1 \slash n}, W_{2 \slash n}, \dots, W_1)|^p] \big]^{1/p} \leq \frac{C_p}{n^{3 \slash 4}}. \label{eqn:rateOfTransformedMilstein}
\end{flalign}
We add that \eqref{eqn:rateOfTransformedMilstein} still holds if the condition $\sigma = 1$ is replaced by 
\begin{flalign*}
&(\sigma 1) \text{\quad $\sigma$ is Lipschitz-continuous on $\mathbb{R}$ and it holds $\sigma(\xi_i) \neq 0$ for all $i \in \{1, \dots, k\}$,}&
\\
&(\sigma 2) \text{\quad $\sigma$ is differentiable on the interval $(\xi_{i-1}, \xi_i)$ with Lipschitz-continuous derivative}&
\\&\text{\quad\quad\quad for all $i \in \{1, \dots, k+1\}$,}&
\end{flalign*}
see \cite{MGY19b}.

It is natural to ask whether the rate $3/4$ can be improved in the setting $\sigma = 1$ and $(\mu 1), (\mu 2)$. This was partially answered to the negative in~\cite{MGY21}: If $\sigma = 1$ and $\mu$ satisfies $(\mu 1), (\mu 2)$ as well as 
\begin{flalign*}
&(\mu 3) \text{\quad there exists an $i \in \{1,\dots, k\}$ with $\mu(\xi_{i} +) \neq \mu(\xi_{i} -)$,}&
\\
&(\mu 4) \text{\quad $\mu$ is bounded,}
\\
&(\mu 5) \text{\quad $\mu$ is increasing,}
\end{flalign*}
then there exists a constant $c > 0$ such that for all $n \in \mathbb{N}$ it holds
\begin{flalign}
\inf_{\substack{t_1, \dots, t_n \in [0,1] \\g \colon \mathbb{R}^n \rightarrow \mathbb{R} \: measurable}} \EE |X_1 - g(W_{t_1}, \dots, W_{t_n})| \geq \frac{c}{n^{3 \slash 4}}. \label{eqn:intro:lowerBoundForError}
\end{flalign}
While it is clear that condition $(\mu 3)$ is needed to obtain the lower bound \eqref{eqn:intro:lowerBoundForError}, it was open up to now whether the conditions $(\mu 4)$ and $(\mu 5)$, which are heavily used in the proof of \eqref{eqn:intro:lowerBoundForError} in~\cite{MGY21}, can be dropped. In the present paper we show that this is in fact the case. We substantially modify the technique of the proof in~\cite{MGY21} to obtain the following result:

\begin{theorem} \label{theorem:lowerBound:SDEs:discDrift}
Let $\mu \colon \mathbb{R} \rightarrow \mathbb{R}$ satisfy $(\mu 1), (\mu 2)$ and $(\mu 3)$. Let $x_0 \in \mathbb{R}$ and let $X: [0,1] \times \Omega \rightarrow \mathbb{R}$ be a strong solution of the SDE 
\begin{flalign}
dX_t = \mu(X_t) \: dt + dW_t \label{eqn:basicSDE}
\end{flalign}
on the time interval $[0,1]$ with initial value $x_0$ and driving Brownian motion $W$. Then there exists a constant $c > 0$ such that for all $n \in \mathbb{N}$,
\begin{flalign*}
\inf_{\substack{t_1, \dots, t_n \in [0,1] \\g \colon \mathbb{R}^n \rightarrow \mathbb{R} \: measurable}} \EE |X_1 - g(W_{t_1}, \dots, W_{t_n})| \geq \frac{c}{n^{3 \slash 4}}.
\end{flalign*} 
\end{theorem}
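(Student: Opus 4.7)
The plan is to adapt the strategy of \cite{MGY21} after first applying a Leobacher--Sz\"olgyenyi-type $C^1$ transformation $G: \R \to \R$ that absorbs the jump of $\mu$ into a jump of $G''$. This pre-processing trades the discontinuity of $\mu$ for an SDE with globally Lipschitz drift, thereby automatically removing the need for the boundedness assumption $(\mu 4)$; the fixed sign of the jump in $G''$ will in turn play the role of the monotonicity assumption $(\mu 5)$ used in \cite{MGY21}.

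For the construction, pick $i \in \{1, \dots, k\}$ as in $(\mu 3)$ and set $\xi := \xi_i$ and $\alpha := \mu(\xi+) - \mu(\xi-) \neq 0$. I would choose $G(x) = x + \phi(x)$ with $\phi \in C^1(\R)$ compactly supported near $\xi$, $\phi(\xi) = \phi'(\xi) = 0$, $\phi'$ Lipschitz, and with $\phi''$ continuous on $\R \setminus \{\xi\}$ satisfying the jump relation $\phi''(\xi-) - \phi''(\xi+) = 2\alpha$. Then $G$ is a bi-Lipschitz $C^1$ diffeomorphism with $G'(\xi) = 1$, and the It\^o--Tanaka formula shows that $Y := G(X)$ satisfies
\begin{flalign*}
dY_t = \tilde\mu(Y_t)\, dt + \tilde\sigma(Y_t)\, dW_t,
\end{flalign*}
where $\tilde\sigma := G' \circ G^{-1}$ is Lipschitz and bounded away from $0$ and $\tilde\mu := [(\mu \cdot G') + \tfrac12 G''] \circ G^{-1}$ is continuous at $y_\xi := G(\xi)$ by the very choice of the jump and is therefore globally Lipschitz under $(\mu 1), (\mu 2)$. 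The essential consequence is that $\tilde\sigma'$ still carries a single signed jump at $y_\xi$, and this is what will produce the $n^{-3/4}$ barrier. Since $G$ is bi-Lipschitz, $|X_1 - g(W_{t_1},\dots,W_{t_n})|$ and $|Y_1 - G\circ g(W_{t_1},\dots,W_{t_n})|$ are equivalent up to a constant for every measurable $g$, so it suffices to prove the lower bound for $Y_1$.

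For the lower bound on $\inf_g \EE|Y_1 - g(W_{t_1},\dots,W_{t_n})|$, let $t^* := \max\{t_i : t_i \le 1\}$ and decompose
\begin{flalign*}
Y_1 = Y_{t^*} + \tilde\sigma(Y_{t^*})(W_1 - W_{t^*}) + R,
\end{flalign*}
where the first two summands are well approximated by $\F := \sigma(W_{t_1},\dots,W_{t_n})$ and $R$ is the higher-order remainder. Applying It\^o--Tanaka to $\tilde\sigma(Y_s) - \tilde\sigma(Y_{t^*})$ on $[t^*,1]$ expresses $R$ as a smooth $\F$-predictable term plus a local-time contribution at $y_\xi$ whose size on the event $\{|Y_{t^*} - y_\xi| \le C/\sqrt n\}$ is of order $1/n$ with a fixed sign (because the jump of $\tilde\sigma'$ is signed). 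Combining this with smooth lower bounds on the density of $Y_{t^*}$ (which follow from the Lipschitz coefficients of the $Y$-equation) and a standard $L^1$ median--variance inequality, the contribution of the final interval $[t^*,1]$ alone produces the desired $c/n^{3/4}$.

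The main technical difficulty I anticipate is this last step: reproducing the conditional-distribution lower bounds of \cite{MGY21} when boundedness of the drift is traded for Lipschitz regularity and monotonicity is traded for the fixed sign of the jump $\alpha$. The density and conditional-variance estimates previously obtained via the explicit structure available under $(\mu 4)$ and $(\mu 5)$ must now be re-derived from general parabolic-type estimates for the $Y$-equation; threading these through the local-time decomposition of $R$ so that the single-signed jump of $\tilde\sigma'$ (which replaces monotonicity in preventing cancellation) yields the $n^{-3/4}$ rate is where I expect the bulk of the work to lie.
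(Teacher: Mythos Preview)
Your opening move---applying a Leobacher--Sz\"olgyenyi transformation $G$ so that $Y=G(X)$ solves an SDE with Lipschitz coefficients $\tilde\mu,\tilde\sigma$---is exactly what the paper does, and your observation that this removes the need for $(\mu4)$ is correct. However, the lower-bound argument you sketch afterwards has a genuine gap.

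The decisive problem is the claim that ``the contribution of the final interval $[t^*,1]$ alone produces the desired $c/n^{3/4}$.'' The infimum is over \emph{all} grids $t_1,\dots,t_n$, so the adversary may take $t_n=1$ (making $[t^*,1]$ empty) or, more generally, make $1-t^*$ arbitrarily small. A single interval of length $h$ contributes at most order $h^{5/2}$ to the $L^2$-error (your local-time heuristic gives exactly this scale), which is \emph{not} bounded below by $c/n^{3/2}$ uniformly in the grid. The rate $n^{-3/4}$ arises only after summing contributions from \emph{all} subintervals in, say, $[1/2,1]$ and using the H\"older-type inequality $\sum_i (t_i-t_{i-1})^{5/2}\ge (1/2)^{5/2}n^{-3/2}$. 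Your proposal contains no mechanism for accumulating across intervals.

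A second, related issue: in your decomposition $Y_1 = Y_{t^*} + \tilde\sigma(Y_{t^*})(W_1-W_{t^*}) + R$, the term $Y_{t^*}$ is \emph{not} $\F$-measurable (it depends on the full path of $W$ on $[0,t^*]$), so the assertion that the first two summands are ``well approximated by $\F$'' already presupposes a version of the very estimate you are trying to prove. The paper handles this by a coupling: it builds a second Brownian motion $\widetilde W$ that agrees with $W$ at the grid points but has independent bridges, sets $\widetilde X$ to be the corresponding solution, and proves $\EE|X_1-\widetilde X_1|^2\ge c n^{-3/2}$ via a recursion $\Delta_i=\EE|G(X_{t_i})-G(\widetilde X_{t_i})|^2$ over \emph{all} $i$. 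The Lipschitz coefficients of the $Y$-equation control the mixed terms of this recursion (this is where the transformation earns its keep), and a \emph{second} transformation $H_{\mu,z}^{-1}(y)=G^{-1}(y+G(z))$ is applied inside each interval to convert the diagonal term back into an expression involving the original $\mu$; the jump of $\mu$ (not of $\tilde\sigma'$) then provides the $(t_i-t_{i-1})^{5/2}$ lower bound per interval. Your plan to work directly with the signed jump of $\tilde\sigma'$ via It\^o--Tanaka is not obviously wrong as a local ingredient, but without the coupling and the summation over intervals it cannot reach the stated conclusion.
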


Above theorem also holds for non-constant diffusion coefficients if the drift coefficient is bounded. As can be seen in the following corollary, it suffices that the diffusion coefficient is elliptic and in the space $C^3_b(\mathbb{R})$ of bounded, three times differentiable functions $\mathbb{R} \rightarrow \mathbb{R}$ with bounded first, second and third derivative.

\begin{cor}\label{cor:GeneralLowerBound}
	Let $\mu \colon \mathbb{R} \rightarrow \mathbb{R}$ be bounded and satisfy $(\mu 1), (\mu 2), (\mu 3)$ and let $\sigma \in C^3_b(\mathbb{R})$ with $\inf_{x \in \mathbb{R}} |\sigma(x)| > 0$. Let $x_0 \in \mathbb{R}$ and let $X: [0,1] \times \Omega \rightarrow \mathbb{R}$ be a strong solution of the SDE 
	\begin{flalign*}
		dX_t = \mu(X_t) \: dt + \sigma(X_t) \:dW_t 
	\end{flalign*}
	on the time interval $[0,1]$ with initial value $x_0$ and driving Brownian motion $W$. Then there exists a constant $c > 0$ such that for all $n \in \mathbb{N}$,
	\begin{flalign*}
		\inf_{\substack{t_1, \dots, t_n \in [0,1] \\g \colon \mathbb{R}^n \rightarrow \mathbb{R} \: measurable}} \EE |X_1 - g(W_{t_1}, \dots, W_{t_n})| \geq \frac{c}{n^{3 \slash 4}}.
	\end{flalign*} 
\end{cor}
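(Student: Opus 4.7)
The plan is to reduce Corollary~\ref{cor:GeneralLowerBound} to Theorem~\ref{theorem:lowerBound:SDEs:discDrift} via the Lamperti transformation. Because $\sigma\in C^3_b(\R)$ is continuous and nowhere zero it has constant sign, so I may assume $\sigma>0$; set $\sigma_{\min}:=\inf_{x\in\R}\sigma(x)>0$ and
\[
G(x)=\int_0^x\frac{1}{\sigma(y)}\,dy,\qquad x\in\R.
\]
Then $G$ is a strictly increasing $C^4$-diffeomorphism of $\R$ onto $\R$ with $1/\|\sigma\|_\infty\le G'(x)\le 1/\sigma_{\min}$, so $G$ and $G^{-1}$ are both globally bi-Lipschitz. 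Since $X$ is a continuous semimartingale and $G\in C^2$, It\^o's formula yields that $Y_t:=G(X_t)$ solves
\[
dY_t=\tilde\mu(Y_t)\,dt+dW_t,\qquad Y_0=G(x_0),
\]
where $\tilde\mu(y)=\mu(G^{-1}(y))/\sigma(G^{-1}(y))-\tfrac12\sigma'(G^{-1}(y))$.

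The heart of the argument is to verify that $\tilde\mu$ satisfies $(\mu1),(\mu2),(\mu3)$ with breakpoints $\tilde\xi_i:=G(\xi_i)$, $i=1,\dots,k$, so that Theorem~\ref{theorem:lowerBound:SDEs:discDrift} applies to $Y$. The summand $-\tfrac12\sigma'\circ G^{-1}$ belongs to $C^2_b(\R)$ and so contributes no discontinuities and no unboundedness issues. For the other summand $(\mu/\sigma)\circ G^{-1}$, on each piece $(\xi_{i-1},\xi_i)$ I use that $\mu$ is bounded and Lipschitz and that $1/\sigma$ is $C^3_b$: then $\mu/\sigma$ is Lipschitz on the piece, with derivative $\mu'/\sigma-\mu\sigma'/\sigma^2$ which is itself Lipschitz on the piece thanks to $(\mu2)$ and the $C^3_b$-regularity of $\sigma$. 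Pre-composition with the bi-Lipschitz $C^3$-map $G^{-1}$ preserves these properties on each image interval $(\tilde\xi_{i-1},\tilde\xi_i)$, giving $(\mu1)$ and $(\mu2)$ for $\tilde\mu$. Since the smooth summand is continuous at $\tilde\xi_i$, the jump of $\tilde\mu$ there equals $(\mu(\xi_i+)-\mu(\xi_i-))/\sigma(\xi_i)$, which is nonzero for some $i$ by $(\mu3)$ and by ellipticity of $\sigma$, giving $(\mu3)$ for $\tilde\mu$.

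The final step is to transfer the lower bound. Theorem~\ref{theorem:lowerBound:SDEs:discDrift} applied to $Y$ yields a constant $c>0$ with $\EE|Y_1-\tilde g(W_{t_1},\dots,W_{t_n})|\ge c/n^{3/4}$ for all $n$, all $t_1,\dots,t_n\in[0,1]$ and all measurable $\tilde g\colon\R^n\to\R$. Given any measurable $g\colon\R^n\to\R$, plugging in $\tilde g=G\circ g$ (which is measurable) and using the Lipschitz bound $\|G'\|_\infty\le\sigma_{\min}^{-1}$ gives
\[
\frac{c}{n^{3/4}}\le\EE|G(X_1)-G(g(W_{t_1},\dots,W_{t_n}))|\le\sigma_{\min}^{-1}\,\EE|X_1-g(W_{t_1},\dots,W_{t_n})|,
\]
which yields the corollary with constant $c\sigma_{\min}$. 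The main technical obstacle is the verification of $(\mu1),(\mu2)$ for $\tilde\mu$ on the unbounded end pieces $(-\infty,\xi_1)$ and $(\xi_k,\infty)$: on a bounded piece, Lipschitzness of $\mu$ already forces boundedness, but on an unbounded piece it only yields linear growth, which would spoil Lipschitzness of $\mu\sigma'/\sigma^2$. This is exactly why boundedness of $\mu$ is imposed in the corollary (but not in Theorem~\ref{theorem:lowerBound:SDEs:discDrift}).
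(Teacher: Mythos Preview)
Your proof is correct and follows essentially the same approach as the paper: apply the Lamperti transform $G(x)=\int_0^x\sigma(y)^{-1}\,dy$, verify that the transformed drift $\tilde\mu=(\mu/\sigma-\tfrac12\sigma')\circ G^{-1}$ satisfies $(\mu1)$--$(\mu3)$, invoke Theorem~\ref{theorem:lowerBound:SDEs:discDrift} for $Y=G(X)$, and pull the lower bound back via the Lipschitz property of $G$. Your write-up is in fact more detailed than the paper's in checking the piecewise regularity of $\tilde\mu$ and in explaining precisely where boundedness of $\mu$ enters (namely to keep $\mu\sigma'/\sigma^2$ Lipschitz on the unbounded end intervals).
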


We note that this lower bound does not hold if one considers methods that are based on sequential evaluations of $W$. In fact, in~\cite{Y21} a Milstein scheme with adaptive step-size control is constructed that achieves an $L^p$-error rate of at least $1$ in terms of the average number of evaluations of $W$ if the conditions $(\mu 1), (\mu 2), (\sigma 1)$ and $(\sigma 2)$ are satisfied.
\\

\section{PROOF OF THE MAIN RESULT}
We briefly present the structure of this section. First, we outline the proof of Theorem \ref{theorem:lowerBound:SDEs:discDrift} in Subsection \ref{subsection:proofIdea}. In Subsection \ref{subsection:transformationAndFurtherTools} we give an overview over preliminary work and we will introduce a transformation which is used in the proof of Theorem \ref{theorem:lowerBound:SDEs:discDrift}. Then we carry out the proof of Theorem \ref{theorem:lowerBound:SDEs:discDrift} in Subsection \ref{subsection:proofOfMainResult}. Finally, we prove Corollary \ref{cor:GeneralLowerBound} in Subsection \ref{subsection:proofOfCorollary}.
\subsection{Idea of the proof}\label{subsection:proofIdea}
The proof of Theorem \ref{theorem:lowerBound:SDEs:discDrift} relies on a lower bound for the investigated $L^2$-error of approximation which is independent of the specific function $g \colon \mathbb{R}^n \rightarrow \mathbb{R}$. This lower bound will be given by the $L^2$-distance of our final time point $X_1$ and the final time point $\widetilde{X}_1$ of another solution of the SDE \eqref{eqn:basicSDE} with driving Brownian motion $\widetilde{W}$. Thereby, the Brownian motion $\widetilde{W}$ is constructed with the original Brownian motion $W$ such that $W$ and $\widetilde{W}$ coincide at the grid points $t_1, \dots, t_n$ and such that $W$ and $\widetilde{W}$ are independent given $W_{t_1}, \dots, W_{t_n}$. To be precise, we use the piecewise linear interpolation $\overline{W}$ of the Brownian motion $W$ in order to define the process $B = W - \overline{W}$. The process $B$ consists of Brownian bridges on each of the intervals $[t_0, t_1], \dots, [t_{n-1}, t_n]$ which are independent of each other. The idea now is to introduce another Brownian bridge process $\widetilde{B}$ with $\mathbb{P}^B = \mathbb{P}^{\widetilde{B}}$ which is independent of $W, B$ and to set
\begin{flalign*}
\widetilde{W} = \overline{W} + \widetilde{B}.
\end{flalign*}
Then we consider the two solutions $X, \widetilde{X}$ of the SDE \eqref{eqn:basicSDE} which are of the form
\begin{flalign*}
X_t = x_0 + \int_0^t \mu(X_s) \: ds + W_t,  \quad \widetilde{X}_t = x_0 + \int_0^t \mu(\widetilde{X}_s) \: ds + \widetilde{W}_t, \quad \quad t \in [0,1].
\end{flalign*}
As already mentioned above, we use the process $\widetilde{X}$ to obtain a lower bound for the approximation error and as we will see, one gets for every measurable function $g \colon \mathbb{R}^n \rightarrow \mathbb{R}$
\begin{flalign*}
\big[ \EE |X_1 - g(W_{t_1}, \dots, W_{t_n})|^2 \big]^{1 \slash 2} \geq \frac{1}{2} \big[ \EE |X_1 - \widetilde{X}_1|^2 \big]^{1 \slash 2}.
\end{flalign*}
For the estimation of the right expression in the above inequality we investigate the distance of the transformations of our solutions. Such transformations were used several times in the literature to obtain upper bounds, see for instance~\cite{LS16},~\cite{LS15b} and~\cite{MGY19b}. Similar to~\cite{MGY19b}, we use a transform $G_{\mu}$ which is Lipschitz-continuous and has a Lipschitz-continuous inverse $G_{\mu}^{-1}$. Since $G_{\mu}$ is Lipschitz-continuous with some Lipschitz-constant $L_{G_{\mu}}$, one sees that
\begin{flalign*}
\EE \big| X_1 - \widetilde{X}_1|^2 \geq (L_{G_{\mu}})^{-2} \EE |G_\mu(X_1) - G_\mu(\widetilde{X}_1)|^2.
\end{flalign*}
Letting
\begin{flalign*}
\widetilde{\mu} := (G_\mu^\prime \cdot \mu + \frac{1}{2}G_{\mu}^{\prime \prime}) \circ G_{\mu}^{-1} \quad \text{and} \quad \widetilde{\sigma} := G_{\mu}^\prime \circ G_{\mu}^{-1},
\end{flalign*}
one can show that $\widetilde{\mu}$ and $\widetilde{\sigma}$ are Lipschitz-continuous and that $G_\mu \circ X$ is a strong solution of the SDE 
\begin{flalign}
dY_t = \widetilde{\mu}(Y_t) \: dt + \widetilde{\sigma}(Y_t) \: dW_t. \label{eqn:basicTildeSDE}
\end{flalign}
The benefit of this transformed process $G_\mu \circ X$ is that the coefficients of the SDE are Lipschitz-continuous which allows us to apply known stability results with respect to the initial values. More precisely, we will set for $i \in \{1, \dots, n\}$ with $t_i \geq \frac{1}{2}$
\begin{flalign*}
\Delta_i := \EE|G_{\mu}(X_{t_i}) - G_{\mu}(\widetilde{X}_{t_i})|^2.
\end{flalign*}
Assume that $t_{i} > 1/2$. Rewriting above definition immediately yields
\begin{flalign*}
\Delta_i &= \EE \big[|G_{\mu}(X_{t_{i-1}}) - G_{\mu}(\widetilde{X}_{t_{i-1}}) + ((G_{\mu}(X_{t_i}) - G_{\mu}(X_{t_{i-1}})) - (G_{\mu}(\widetilde{X}_{t_i}) - G_{\mu}(\widetilde{X}_{t_{i-1}})))|^2 \big] 
\\&= \Delta_{i-1} + 2m_i + d_i
\end{flalign*}
with
\begin{flalign*}
m_i := \EE\big[(G_{\mu}(X_{t_{i-1}}) - G_{\mu}(\widetilde{X}_{t_{i-1}})) \cdot ((G_{\mu}(X_{t_i}) - G_{\mu}(X_{t_{i-1}})) - (G_{\mu}(\widetilde{X}_{t_i}) - G_{\mu}(\widetilde{X}_{t_{i-1}})))\big]
\end{flalign*}
and
\begin{flalign*} 
d_i := \EE|(G_{\mu}(X_{t_i}) - G_{\mu}(X_{t_{i-1}})) - (G_{\mu}(\widetilde{X}_{t_i}) - G_{\mu}(\widetilde{X}_{t_{i-1}}))|^2.
\end{flalign*}
Then we can bound the mixed terms in a suitable way and we will derive a constant $\hat{C}_1 > 0$ with
\begin{flalign*}
|m_i| \leq \frac{\hat{C}_1}{n} \Delta_{i-1}.
\end{flalign*}
Above estimate for the mixed terms $m_i$ can be obtained fast but we have to put more effort into finding a lower bound for the part with the $d_i$-terms. Applying again another transformation to the processes $(G_\mu(X_{s + t_{i-1}}) - G_\mu(X_{t_{i-1}}))_{s \in [0, t_i - t_{i-1}]}$ and $(G_\mu(\widetilde{X}_{ s+ t_{i-1}}) - G_\mu(\widetilde{X}_{t_{i-1}}))_{s \in [0, t_i  - t_{i-1}]}$ conditioned on $(X_{t_{i-1}}, \widetilde{X}_{t_{i-1}})$ will yield the existence of constants $\hat{c}_1, \hat{C}_2 > 0$ such that
\begin{flalign*}
d_i \geq \hat{c}_1(t_i - t_{i-1})^{5 \slash 2} - \frac{\hat{C}_2}{n} \Delta_{i-1} - o(n^{- 5 \slash 2}).
\end{flalign*}
Combining the estimates for the mixed terms $m_i$ and for the diagonal terms $d_i$ shows the existence of constants $c_1, C_1> 0$ such that for all $j \in \{1, \dots, n\}$ with $t_j \geq\frac{1}{2}$ it holds
\begin{flalign*}
\Delta_n &\geq (1- \frac{C_1}{n})^{n-j} \Delta_j + c_1 \sum_{i=j+1}^n(1- \frac{C_1}{n})^{n-i} (t_i - t_{i-1})^{5 \slash 2} - (n-j) \cdot o(n^{- 5 \slash 2})
\\& \geq  c_1 (1- \frac{C_1}{n})^{n} \sum_{i=j+1}^n (t_i - t_{i-1})^{5 \slash 2} -   o(n^{- 3 \slash 2}).
\end{flalign*}
Taking $j = r$ with $t_r = \frac{1}{2}$ yields the claim for the $L^2$-error instead of the $L^1$-error if one notices that $\lim_{n \rightarrow \infty} (1- \frac{C_1}{n})^n = e^{-C_1}$ and that it holds by the H\"older-inequality
\begin{flalign*}
\sum_{i=r+1}^n(t_i - t_{i-1})^{5 \slash 2} \geq \frac{1}{2^{5 \slash 2}n^{3 \slash 2}}.
\end{flalign*}
The lower bound for the $L^1$-error then follows by a standard technique.

\subsection{The transformation and further tools}\label{subsection:transformationAndFurtherTools}
In this section we introduce the already mentioned transformation $G_\mu$ which will allow us to deal with an SDE with Lipschitz-continuous coefficients instead of the original SDE \eqref{eqn:basicSDE}. Such transformations were already applied in~\cite{LS16},~\cite{LS15b} and~\cite{MGY19b} to obtain upper bounds for the Euler-scheme or for a Milstein-type scheme. Subsequently, we present in Section \ref{subsubsection:gernalResults_mu_sde} basic properties of $(\mu 1)$-functions, of SDEs with additive noise and of SDEs in the classical setting. Afterwards, the transformation $G_\mu$ will be introduced in Section \ref{subsubsection:transformation} and we restate some results which are proven in~\cite{MGY21} and which we will use later. Moreover, we give in Section \ref{subsubsection:couplingOfNoise} a short introduction to the lower bound technique presented therein.

\subsubsection{General results for $(\mu 1)$-functions and SDEs}\label{subsubsection:gernalResults_mu_sde}
Firstly, we summarize some basic properties of a $(\mu 1)$-function. In particular, we see in the following lemma that a $(\mu 1)$-function can be written as the sum of a Lipschitz-continuous function and a step function which will be used to drop the monotonicity condition on $\mu$.
\begin{lemma}\label{lemma:functionalsWelldefinedMuOfLinearGrowth}
	Let $\mu \colon \mathbb{R} \rightarrow \mathbb{R}$ be a function satisfying $(\mu1)$. Then it holds:
	\begin{itemize}  
		\item[(i)] The function $\mu$ satisfies the linear growth property and for $i \in \{1, \dots, k\}$ the limits $\mu(\xi_i-) = \lim_{x \uparrow \xi_i} \mu(x)$ and $\mu(\xi_i+) = \lim_{x \downarrow \xi_i} \mu(x)$ exist. Moreover, with $D_i := \{(x,y) \in \mathbb{R}^2 : (x - \xi_i)(y -\xi_i)\leq 0\}$ for $i \in \{1, \dots, k\}$ there exists a constant $C > 0$ such that for all $x,y \in \mathbb{R}$ it holds
		\begin{flalign*}
			|\mu(x) - \mu(y)| \leq C(|x-y| + \sum_{i=1}^k 1_{D_i}(x,y)).
		\end{flalign*}
		\item[(ii)] There exist a Lipschitz-continuous function $\mu_{cont} \colon \mathbb{R} \rightarrow \mathbb{R}$ and real numbers $\alpha_1, \dots, \alpha_k,$ $\gamma_1, \dots, \gamma_k \in \mathbb{R}$ such that
			\begin{flalign*}
				\mu = \mu_{cont} + \sum_{i=1}^k \alpha_i 1_{[\xi_i, \infty)} + \sum_{i=1}^k \gamma_i 1_{\{\xi_i\}}.
		\end{flalign*}
	\end{itemize}
\end{lemma}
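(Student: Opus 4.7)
My plan is to first verify the two preliminary facts in (i) — existence of one-sided limits and linear growth — directly from piecewise Lipschitz continuity, then prove the decomposition (ii) by constructing $\mu_{cont}$ explicitly, and finally use (ii) to obtain the pointwise estimate in (i). The whole argument is essentially bookkeeping, with the only care needed being a uniform choice of Lipschitz constant across the finitely many pieces.

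For each $i \in \{1, \dots, k+1\}$, Lipschitz continuity of $\mu$ on $(\xi_{i-1}, \xi_i)$ gives uniform continuity on that interval, so the one-sided limits $\mu(\xi_{i-1}+)$ and $\mu(\xi_i-)$ (whenever the relevant $\xi_j$ is finite) exist in $\mathbb{R}$. Linear growth follows because on $(-\infty, \xi_1)$ and $(\xi_k, \infty)$ the function $\mu$ is Lipschitz and hence grows at most linearly, while on each bounded piece $\mu$ is bounded by Lipschitz continuity together with the existence of endpoint limits. Now set $\alpha_j := \mu(\xi_j+) - \mu(\xi_j-)$ and $\gamma_j := \mu(\xi_j) - \mu(\xi_j+)$ for $j = 1, \dots, k$, and define
\begin{flalign*}
\mu_{cont}(x) := \mu(x) - \sum_{j=1}^k \alpha_j 1_{[\xi_j, \infty)}(x) - \sum_{j=1}^k \gamma_j 1_{\{\xi_j\}}(x).
\end{flalign*}
A short check at each $\xi_j$ yields $\mu_{cont}(\xi_j -) = \mu_{cont}(\xi_j) = \mu_{cont}(\xi_j +) = \mu(\xi_j-) - \sum_{i<j}\alpha_i$, so $\mu_{cont}$ is continuous on $\mathbb{R}$. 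On each open piece $(\xi_{j-1}, \xi_j)$ it differs from $\mu$ by an additive constant and is thus Lipschitz with the piecewise Lipschitz constant of $\mu$; taking $L$ to be the maximum of those finitely many constants, continuity at the $\xi_j$ together with a chaining argument across adjacent closed intervals upgrades this to Lipschitz continuity of $\mu_{cont}$ on all of $\mathbb{R}$ with constant $L$, which proves (ii).

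For the pointwise estimate in (i), the triangle inequality applied to the decomposition obtained in (ii) gives
\begin{flalign*}
|\mu(x) - \mu(y)| \leq L|x-y| + \sum_{i=1}^k |\alpha_i|\,|1_{[\xi_i,\infty)}(x) - 1_{[\xi_i,\infty)}(y)| + \sum_{i=1}^k |\gamma_i|\,|1_{\{\xi_i\}}(x) - 1_{\{\xi_i\}}(y)|.
\end{flalign*}
Each indicator difference is bounded by $1_{D_i}(x,y)$: if $|1_{[\xi_i,\infty)}(x) - 1_{[\xi_i,\infty)}(y)| = 1$ then $\xi_i$ lies weakly between $x$ and $y$, forcing $(x-\xi_i)(y-\xi_i) \leq 0$, and the same conclusion is immediate if exactly one of $x, y$ equals $\xi_i$. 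Taking $C := \max(L, \max_i(|\alpha_i| + |\gamma_i|))$ then produces the claimed inequality. No step involves a genuinely hard estimate; the main (mild) obstacle is keeping the finitely many bookkeeping constants straight and choosing the sign of the atom corrections $\gamma_j$ correctly so that the continuity check at each $\xi_j$ closes.
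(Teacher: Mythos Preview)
Your proof is correct and follows essentially the same route as the paper: the decomposition in (ii) uses the identical choices $\alpha_j = \mu(\xi_j+) - \mu(\xi_j-)$ and $\gamma_j = \mu(\xi_j) - \mu(\xi_j+)$, though you handle all $k$ jumps at once rather than by induction as the paper does. One pleasant difference is that the paper simply cites the pointwise estimate in (i) from an earlier reference, whereas you derive it cleanly as a corollary of the decomposition (ii), which makes the argument self-contained.
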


\begin{proof}
		The first statement coincides with Lemma 1 in~\cite{MGY21}. The second claim follows by induction over the number of discontinuities of $\mu$. For simplicity, we only show the claim for $k=1$. Set 
		\begin{flalign*}
			\mu_{cont} = 1_{(-\infty, \xi_1)} \cdot \mu + 1_{\{\xi_1\}} \cdot \mu(\xi_1 -) + 1_{(\xi_1, \infty)} \cdot(\mu(x) - \mu(\xi_1 +) + \mu(\xi_1 -)),
		\end{flalign*}
		$\alpha_1 = (\mu(\xi_1+) - \mu(\xi_1 -))$ and $\gamma_1 = (\mu(\xi_1) - \mu(\xi_1+))$. Since $\mu_{cont}$ is continuous and Lipschitz-continuous on $(-\infty, \xi_1)$ and on $(\xi_1, \infty)$, $\mu_{cont}$ is Lipschitz-continuous and the claim is proven in this case.
\end{proof}

Subsequently, we present properties of SDEs with additive noise and a drift satisfying $(\mu 1)$. In the next lemma we show that the probability that a solution of the SDE \eqref{eqn:basicSDE} is in an interval can be bounded up to some constants by the length of the interval.
\begin{lemma} \label{lemma:boundProbability:SolutionIsInInterval}
	Let $\mu \colon \mathbb{R} \rightarrow \mathbb{R}$ be a measurable function satisfying the linear growth property, let $(\Omega, \mathcal{F}, \mathbb{P})$ be a complete probability space, let $W \colon [0,1] \times \Omega \rightarrow \mathbb{R}$ be a Brownian motion, let $x_0 \in \mathbb{R}$ and let $X \colon [0,1] \times \Omega \rightarrow \mathbb{R}$ be a strong solution of the SDE \eqref{eqn:basicSDE} on the time interval $[0,1]$ with driving Brownian motion $W$ and initial value $x_0$. Let $\tau \in (0,1]$ and $M > 0$. Then there exist constants $c,C > 0$ such that for all $t \in [\tau, 1]$ and for all $a,b \in [-M,M]$ with $a \leq b$ it holds
	\begin{flalign*}
		c (b-a) \leq \mathbb{P}(X_t \in [a,b]) \leq C(b - a).
	\end{flalign*}
\end{lemma}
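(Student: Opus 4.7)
The plan is to show that, for every $t \in [\tau, 1]$, $X_t$ has a Lebesgue density $p_t$ which is bounded above and below by positive constants on $[-M, M]$, uniformly in $t$; the claim then follows immediately by integrating $p_t$ over $[a, b] \subseteq [-M, M]$. I would obtain these density bounds via a Girsanov change of measure that turns $X$ into a Brownian motion started at $x_0$. Since $\mu$ has linear growth and the strong solution $X$ satisfies the standard moment estimate $\EE \sup_{s \in [0,1]} |X_s|^p < \infty$ for every $p \geq 1$, Novikov's condition holds and
\[
Z_t \;=\; \exp\Bigl(-\int_0^t \mu(X_s)\,dW_s - \tfrac{1}{2}\int_0^t \mu(X_s)^2\,ds\Bigr)
\]
is a martingale on $[0,1]$. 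Setting $d\mathbb{Q}/d\PP := Z_1$, Girsanov's theorem yields that $\widetilde W_s := W_s + \int_0^s \mu(X_u)\,du$ is a $\mathbb{Q}$-Brownian motion and $X_s = x_0 + \widetilde W_s$ under $\mathbb{Q}$, so $X_t \sim N(x_0, t)$ under $\mathbb{Q}$ with density $\phi_t(x) = (2\pi t)^{-1/2} \exp(-(x - x_0)^2/(2t))$, which is trivially bounded between positive constants on $[-M, M] \times [\tau, 1]$.

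Switching back to $\PP$, for any Borel set $A \subseteq \R$,
\[
\PP(X_t \in A) \;=\; \mathbb{E}_{\mathbb{Q}}\bigl[Z_t^{-1}\, 1_{\{X_t \in A\}}\bigr] \;=\; \int_A \phi_t(x)\, h(t, x)\, dx
\]
with $h(t, x) := \mathbb{E}_{\mathbb{Q}}[Z_t^{-1} \mid X_t = x]$. It therefore suffices to show $c \leq h(t, x) \leq C$ uniformly on $[\tau, 1] \times [-M, M]$. Using $W = \widetilde W - \int_0^\cdot \mu(X_u)\,du$ I would rewrite
\[
Z_t^{-1} \;=\; \exp\Bigl(\int_0^t \mu(X_s)\,d\widetilde W_s - \tfrac{1}{2}\int_0^t \mu(X_s)^2\,ds\Bigr),
\]
so that $Z_t^{-1}$ is itself an exponential martingale in $\widetilde W$ under $\mathbb{Q}$, with finite moments of all orders thanks to the linear growth of $\mu$ and the Gaussianity of $X$ under $\mathbb{Q}$. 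Conditional on $X_t = x$, the process $(X_s)_{s \leq t}$ is under $\mathbb{Q}$ a Brownian bridge from $x_0$ to $x$ on $[0, t]$.

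The main obstacle is turning these unconditional moment bounds into two-sided estimates for the conditional expectation $h(t, x)$. For the upper bound I would use conditional Cauchy--Schwarz, $h(t, x)^2 \leq \mathbb{E}_{\mathbb{Q}}[Z_t^{-2} \mid X_t = x]$, and bound the right-hand side using that the moments of a Brownian bridge with endpoints in the compact set $\{x_0\} \cup [-M, M]$ are uniformly controlled for $t \in [\tau, 1]$. For the lower bound I would apply conditional Jensen, $h(t, x) \geq \exp(\mathbb{E}_{\mathbb{Q}}[\log Z_t^{-1} \mid X_t = x])$, and bound the conditional expectation of $\log Z_t^{-1}$ from below. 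The delicate piece in both steps is the stochastic integral $\int_0^t \mu(X_s)\,d\widetilde W_s$, since the conditional law of $\widetilde W$ is a Brownian bridge with singular drift at its endpoint. A clean workaround is to first take $\mu$ smooth, so that Itô's formula applied to an antiderivative $F$ of $\mu$ rewrites the integral as $F(X_t) - F(x_0) - \tfrac{1}{2}\int_0^t \mu'(X_s)\,ds$; after conditioning on $X_t = x$ the first two terms become the deterministic constant $F(x) - F(x_0)$, bounded on $[-M, M]$, while the remaining Lebesgue integral against the bridge is straightforward to bound. A standard mollification argument, combined with uniform $L^p$-moments of $X$ and stability of Girsanov densities, then extends the bounds to general measurable $\mu$ of linear growth.
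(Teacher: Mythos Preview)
Your Girsanov approach is a genuinely different route from the paper's: the paper simply cites existing heat-kernel estimates --- existence of a density $p_t$ from Theorem~2.1 in~\cite{FP10}, and explicit two-sided Gaussian-type bounds on $p_t$, uniform on $[\tau,1]\times[-M,M]$, from Theorems~3.1 and~3.2 in~\cite{QZ04}. Your plan amounts to rederiving such bounds from scratch; this is possible in principle, but the sketch has two gaps.

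First, Novikov's condition does not follow from finite polynomial moments of $X$: Gronwall gives $\int_0^1\mu(X_s)^2\,ds \le C_1 + C_2\,(\sup_{s\le 1}|W_s|)^2$, and $\EE\bigl[\exp\bigl(\tfrac12 C_2\,(\sup_{s}|W_s|)^2\bigr)\bigr]<\infty$ only when $C_2<1$, which fails once the linear-growth constant of $\mu$ is large. The martingale property of $Z$ is still true (e.g.\ via Bene\v{s}' criterion, or Novikov on short subintervals), but your stated justification is wrong. Second, and more seriously, the It\^o-formula-plus-mollification step does not close: for a merely measurable $\mu$ the mollified $\mu_\varepsilon$ carries no uniform bound on $\mu_\varepsilon'$ (already $\mu=1_{[0,\infty)}$ gives $\|\mu_\varepsilon'\|_\infty\sim\varepsilon^{-1}$), so the conditional bounds you derive for smooth $\mu$ blow up as $\varepsilon\to 0$. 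The Jensen lower bound can in fact be salvaged without any mollification by enlarging the filtration by $\widetilde W_t$ and evaluating $\EE_{\mathbb Q}\bigl[\int_0^t\mu(X_s)\,d\widetilde W_s\,\big|\,X_t=x\bigr]$ through the bridge drift $\tfrac{\widetilde W_t-\widetilde W_s}{t-s}$, whose singularity is integrable. The conditional Cauchy--Schwarz upper bound, however, then requires \emph{exponential} conditional moments of that same singular integral, which is a nontrivial additional step your outline does not address.
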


\begin{proof}
	First of all, we know with Theorem 2.1 in~\cite{FP10} that for $t \in (0,1]$ the distribution $\mathbb{P}^{X_t}$ has a  Lebesgue density $p_t$ since $\mu$ satisfies the linear growth property. Therefore, it suffices to show the inequalities
	\begin{flalign}
		0 < \inf_{t \in [\tau, 1]} \inf_{x \in [-M, M]} p_t(x) \leq \sup_{t \in [\tau, 1]} \sup_{x \in [-M, M]} p_t(x) < \infty. \label{eqn:lemma:boundProbability:SolutionIsInInterval:infSupBound}
	\end{flalign}
	Let $K_\mu > 0$ be such that $|\mu(x)| \leq K_\mu (1 + |x|)$ for all $x \in \mathbb{R}$. Now it holds 
	\begin{flalign*}
		|\mu(x)| \leq K_\mu(1 + |x_0| + |x - x_0|) \quad \quad \text{for all $x \in \mathbb{R}$.}
	\end{flalign*}
	Hence, applying Theorem 3.1 and Theorem 3.2 in~\cite{QZ04} yields the existence of constants $c_i,C_i > 0$ for $i\in \{1,\dots, 6\}$ such that for all $t \in (0, 1]$ and $x \in \mathbb{R}$ we have
	\begin{flalign*}
		p_t(x) &\geq \frac{c_1}{\sqrt{e^{c_2t} - 1}} \cdot  \exp(- c_3  \frac{|x - x_0|^2}{e^{c_2t } - 1} - c_3(1 + |x_0|) (1 + |x -x_0|) - c_4(1 + |x_0|)^2t)
		\intertext{as well as}
		p_t(x) &\leq \frac{C_1}{\sqrt{1- e^{-C_2t}}} + \frac{C_3 (1+ |x_0|)}{\sqrt{1- e^{-C_2 t}}} \cdot (1+ |x - x_0|) \cdot \exp(- \frac{C_4 |x - x_0|^2}{(1- e^{-C_2 t}) e^{C_5 t}} + C_6 (1+|x_0|)^2).
	\end{flalign*}
	Thus, it holds \eqref{eqn:lemma:boundProbability:SolutionIsInInterval:infSupBound} which finishes the proof.
\end{proof}

With the above lemmas we can now investigate how much time a solution of the SDE \eqref{eqn:basicSDE} spends on a different side of some real point than its approximation.
	
\begin{lemma}\label{lemma:lowerBound:dropRecursionIntegralOfSDE:boundJumpPositios}
	Let $\mu \colon \mathbb{R} \rightarrow \mathbb{R}$ be a function satisfying $(\mu 1)$. Let $x_0, \xi \in \mathbb{R}$ and $X \colon [0,1] \times \Omega \rightarrow \mathbb{R}$ be a strong solution of the SDE \eqref{eqn:basicSDE} on the time interval $[0,1]$ with initial value $x_0$ and driving Brownian motion $W$. Then there exists a constant $C > 0$ such that for all $\frac{1}{2} \leq s \leq t\leq 1$ it holds
	\begin{flalign*}
		\EE |\int_s^t 1_{\{(X_{s} + W_u - W_{s} - \xi)(X_u - \xi) \leq 0\}} \: du|^2 \leq  C (t-s)^{5/2 + 1/6}.
	\end{flalign*}
\end{lemma}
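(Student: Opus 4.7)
The plan is to observe that the indicator in the integrand forces $X_u$ into a narrow window around $\xi$, and then to estimate the probability of that window. Writing the SDE in integral form,
\begin{equation*}
X_u - \xi = (X_s + W_u - W_s - \xi) + \int_s^u \mu(X_r)\, dr,
\end{equation*}
one sees that if $(X_s + W_u - W_s - \xi)(X_u - \xi) \leq 0$, the two quantities have opposite signs (or one of them vanishes), which forces
\begin{equation*}
|X_u - \xi| \leq \Big|\int_s^u \mu(X_r)\, dr\Big| \leq K_\mu(t-s)\,Y,
\end{equation*}
where $Y := 1 + \sup_{r \in [s,t]}|X_r|$ and $K_\mu$ is a linear-growth constant of $\mu$, whose existence is granted by Lemma~\ref{lemma:functionalsWelldefinedMuOfLinearGrowth}(i).

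Denote the event in the indicator by $A_u$. Since $1_{A_u} = 1_{A_u}^2$, Jensen's inequality and Fubini yield
\begin{equation*}
\EE\Big|\int_s^t 1_{A_u}\,du\Big|^2 \leq (t-s)\int_s^t \PP\big(|X_u - \xi| \leq K_\mu(t-s)Y\big)\,du.
\end{equation*}
For a truncation level $N > 0$ to be chosen, I split
\begin{equation*}
\PP\big(|X_u - \xi| \leq K_\mu(t-s)Y\big) \leq \PP\big(|X_u - \xi| \leq K_\mu(t-s)N\big) + \PP(Y > N).
\end{equation*}
The first probability is bounded by $C_1(t-s)N$ using an upper bound on the Lebesgue density of $X_u$ that is uniform in $\xi$ and in $u \in [1/2,1]$. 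The second probability is bounded by $\EE[Y^2]\,N^{-2}$ via Markov's inequality, and $\EE[Y^2] < \infty$ by standard moment estimates for SDEs with drift of linear growth and additive noise (Gronwall applied to $\sup_{r\leq 1}|X_r|$).

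Choosing $N := (t-s)^{-1/3}$ balances the two contributions at order $(t-s)^{2/3}$, giving $\PP(|X_u-\xi| \leq K_\mu(t-s)Y) \leq C(t-s)^{2/3}$, and substituting back produces
\begin{equation*}
\EE\Big|\int_s^t 1_{A_u}\,du\Big|^2 \leq (t-s)^2 \cdot C(t-s)^{2/3} = C(t-s)^{5/2 + 1/6},
\end{equation*}
since $2+2/3 = 5/2 + 1/6$. The delicate step, and the one I would expect to require the most care in writing up, is the $\xi$-uniform density bound: Lemma~\ref{lemma:boundProbability:SolutionIsInInterval} as stated only provides constants depending on a prescribed window $[-M,M]$, while my truncation level $N$ depends on $t-s$ and the narrow interval is centered at $\xi$. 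The required uniformity is nevertheless immediate from the explicit Gaussian-type density estimate of~\cite{QZ04} that already drives the proof of Lemma~\ref{lemma:boundProbability:SolutionIsInInterval}: the density $p_u$ of $X_u$ satisfies $\sup_{u \in [1/2,1]}\sup_{x \in \mathbb{R}} p_u(x) < \infty$, and this yields the $\xi$-free bound $C_1(t-s)N$ used above.
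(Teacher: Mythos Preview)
Your argument is correct and essentially matches the paper's proof: both rewrite the indicator condition as $|X_u-\xi|\le|\int_s^u\mu(X_v)\,dv|$, bound the squared integral by $(t-s)\int_s^t\PP(\cdot)\,du$, split the probability at a threshold, control the small-window part via the density estimate behind Lemma~\ref{lemma:boundProbability:SolutionIsInInterval} and the tail part via Markov's inequality, and balance the two contributions at order $(t-s)^{2/3}$ (the paper parameterises by $\varepsilon=1/6$, which is exactly your choice $N=(t-s)^{-1/3}$). Your closing concern is in fact unnecessary: since $K_\mu(t-s)N=K_\mu(t-s)^{2/3}\le K_\mu$, the window $[\xi-K_\mu(t-s)N,\,\xi+K_\mu(t-s)N]$ always lies inside the fixed interval $[-(|\xi|+K_\mu),\,|\xi|+K_\mu]$, so Lemma~\ref{lemma:boundProbability:SolutionIsInInterval} applies directly with $M=|\xi|+K_\mu$ and no global density bound is needed.
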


\begin{proof}
	Let $\frac{1}{2} \leq s \leq t\leq 1$. Using the definition of a strong solution we see that for $u \in [s,t]$ it holds 
	\begin{flalign*}
		X_s + W_u - W_s = X_u - \int_{s}^{u} \mu(X_v) \: dv.
	\end{flalign*}
	Therewith, we obtain
	\begin{flalign*}
		\EE |\int_{s}^{t} 1_{\{(X_s + W_u - W_s - \xi)(X_u - \xi) \leq 0\}} \: du|^2 
		&\leq (t - s) \EE \int_{s}^{t} 1_{\{( X_u - \int_{s}^u \mu(X_v) \: dv - \xi)(X_u - \xi) \leq 0\}} \: du
		\\& \leq(t - s) \int_{s}^{t} \mathbb{P}(| X_u - \xi| \leq |\int_{s}^{u} \mu(X_v) \: dv|) \: du.
	\end{flalign*}
	Now fix $\varepsilon \in (0, \infty)$, let $u \in [s,t]$ and let us proceed similar to the proof of Lemma 14 in~\cite{MGY21}. It holds
	\begin{flalign*}
		\mathbb{P}(| X_u - \xi| \leq |\int_{s}^{u} \mu(X_v) \: dv|) \leq \mathbb{P}(| X_u - \xi| \leq (t-s)^{1/2 + \varepsilon}) + \mathbb{P}((t-s)^{ 1/2 + \varepsilon} \leq |\int_{s}^{u} \mu(X_v) \: dv|)
	\end{flalign*}
	and due to Lemma \ref{lemma:functionalsWelldefinedMuOfLinearGrowth} and Lemma \ref{lemma:boundProbability:SolutionIsInInterval} there exists a constant $C_1 > 0$ such that 
		\begin{flalign*}
			\mathbb{P}(| X_u - \xi| \leq  (t-s)^{1/2 + \varepsilon}) \leq C_1 (t-s)^{1/2 + \varepsilon}.
		\end{flalign*}
		Later we show the existence of some constant $C_2 > 0$ which is independent of $s,t,u$ and which satisfies
	\begin{flalign}
		\mathbb{P}( (t-s)^{1/2 + \varepsilon}\leq |\int_{s}^{u} \mu(X_v) \: dv|) \leq C_2 (t-s)^{1 - 2 \varepsilon}. \label{eqn:lowerBound:dropRecursionIntegralOfSDE:boundJumpPositios:boundIntegralPart}
	\end{flalign}
	Then the claim follows with the choice $\varepsilon = \frac{1}{6}$.
	\\Let us start with the derivation of the above $C_2$. We see with an application of Markov's inequality that it holds 
	\begin{flalign*}
		\mathbb{P}((t-s)^{1/2 + \varepsilon} \leq |\int_{s}^{u} \mu(X_v) \: dv|) \leq (t-s)^{- 1 - 2\varepsilon} \EE|\int_{s}^{u} \mu(X_v) \: dv|^2 \leq (t-s)^{- 2\varepsilon} \int_{s}^{t} \EE|\mu(X_v)|^2 \: dv.
	\end{flalign*}
	Since $\mu$ satisfies the linear growth property according to Lemma \ref{lemma:functionalsWelldefinedMuOfLinearGrowth} there exists a constant $C_2 > 0$ such that $\EE [\sup_{v \in [0,1]} |\mu(X_v)|^2] \leq  C_2$. Hence, we obtain the validity of
	\begin{flalign*}
		\mathbb{P}((t-s)^{1/2 + \varepsilon} \leq |\int_{s}^{u} \mu(X_v) \: dv|) \leq   C_2(t-s)^{1 - 2\varepsilon}.
	\end{flalign*}
	Altogether, the existence of $C_2 > 0$ in \eqref{eqn:lowerBound:dropRecursionIntegralOfSDE:boundJumpPositios:boundIntegralPart} is shown which finishes the proof.
\end{proof}

Besides the above estimation for the approximation of a strong solution, it will be helpful to have a functional relation between a solution of the SDE \eqref{eqn:basicSDE} and the initial value together with the driving Brownian motion. This functional relation is presented in the next lemma which one obtains similar to Lemma 9 in~\cite{MGY21}. 

\begin{lemma} \label{lemma:existenceOfFunctionF}
	Let $\mu$ be a function satisfying $(\mu 1)$. Then for every $T \in (0, \infty)$ there exists a Borel-measurable function
	\begin{flalign*}
		F \colon \mathbb{R} \times C([0,T], \mathbb{R}) \rightarrow C([0,T], \mathbb{R})
	\end{flalign*}
	such that for every complete probability space $(\Omega, \mathcal{F}, \mathbb{P})$, every Brownian motion $W \colon [0,T] \times \Omega \rightarrow \mathbb{R}$ and every random variable $\eta \colon \Omega \rightarrow \mathbb{R}$ such that $W, \eta$ are independent it holds:
	\begin{itemize}
		\item[(i)] if $X \colon [0,T] \times \Omega \rightarrow \mathbb{R}$ is a strong solution of the SDE \eqref{eqn:basicSDE} on the time interval $[0,T]$ with driving Brownian motion $W$ and initial value $\eta$, then $\mathbb{P}$-almost surely it holds $X = F(\eta, W)$,
		
		\item[(ii)] $F(\eta, W)$ is a strong solution of the SDE \eqref{eqn:basicSDE} on the time interval $[0,T]$ with driving Brownian motion $W$ and initial value $\eta$.
	\end{itemize}
\end{lemma}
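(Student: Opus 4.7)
The plan is to follow the construction of Lemma~9 in~\cite{MGY21} with modifications needed once the monotonicity of $\mu$ is dropped. The route proceeds in three steps: strong existence and pathwise uniqueness, construction of a solution functional for each fixed deterministic initial value, and finally an upgrade to joint Borel measurability in $(y,w)$.

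First I would use the fact that by Lemma~\ref{lemma:functionalsWelldefinedMuOfLinearGrowth}(i), $\mu$ satisfies the linear growth property. Strong existence for the SDE~\eqref{eqn:basicSDE} on $[0,T]$ with any deterministic initial value $y\in\R$ is then standard, for instance via the transformation $G_\mu$ to be introduced in Subsection~\ref{subsubsection:transformation}, which reduces the problem to an SDE with Lipschitz coefficients $\widetilde{\mu},\widetilde{\sigma}$. Pathwise uniqueness holds by the same device: if $X,X'$ are two strong solutions on the same probability space driven by the same $W$ with common initial value $\eta$, then $G_\mu\circ X$ and $G_\mu\circ X'$ are two strong solutions of the Lipschitz SDE~\eqref{eqn:basicTildeSDE} with common initial value $G_\mu(\eta)$, hence indistinguishable; applying $G_\mu^{-1}$ yields $X=X'$ a.s.

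Next I would invoke the Yamada--Watanabe mechanism (exactly as in Lemma~9 of~\cite{MGY21}) to obtain, for each deterministic $y\in\R$, a Borel-measurable functional $F_y\colon C([0,T],\R)\to C([0,T],\R)$ such that $F_y(W)$ is the unique strong solution with initial value $y$ on any complete probability space carrying a Brownian motion $W$. The remaining task is joint measurability of $(y,w)\mapsto F(y,w)$. The cleanest route is to construct the solution to the transformed SDE~\eqref{eqn:basicTildeSDE} by Picard iteration: setting $Y^{(0)}(z,w)_t := z$ and
\begin{flalign*}
Y^{(k+1)}(z,w)_t := z + \int_0^t \widetilde{\mu}(Y^{(k)}(z,w)_s)\,ds + \int_0^t \widetilde{\sigma}(Y^{(k)}(z,w)_s)\,dw_s,
\end{flalign*}
one obtains, thanks to the Lipschitz property of $\widetilde{\mu},\widetilde{\sigma}$, maps $Y^{(k)}\colon \R\times C([0,T],\R)\to C([0,T],\R)$ that are jointly Borel measurable (the stochastic integrals are defined pathwise on a full-measure set via the usual dyadic approximation, yielding a Borel version). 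Picard's contraction argument yields a.s.\ uniform convergence of $Y^{(k)}$ along some deterministic subsequence to a jointly Borel-measurable limit $\widetilde{F}\colon \R\times C([0,T],\R)\to C([0,T],\R)$. I would then define
\begin{flalign*}
F(y,w) := G_\mu^{-1}\circ \widetilde{F}(G_\mu(y),w),
\end{flalign*}
which is jointly Borel measurable as a composition of Borel maps (using that $G_\mu^{-1}$ is continuous).

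Finally I would verify (i) and (ii). For (ii), for deterministic $y$ the process $\widetilde{F}(G_\mu(y),W)$ solves~\eqref{eqn:basicTildeSDE} with initial value $G_\mu(y)$, hence $F(y,W)$ solves~\eqref{eqn:basicSDE} with initial value $y$. For $\eta$ independent of $W$, conditioning on $\eta$ and applying Fubini yields that $F(\eta,W)$ is a strong solution with initial value $\eta$. Claim (i) then follows from the pathwise uniqueness established in the first step. The main obstacle is the joint measurability in $(y,w)$: fixing each coordinate gives measurability for free from the Yamada--Watanabe construction, but upgrading to joint Borel measurability requires a deterministic measurable approximation procedure, which is why the detour through the Lipschitz SDE~\eqref{eqn:basicTildeSDE} and Picard iteration is essential.
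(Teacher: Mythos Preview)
Your proposal is correct and follows essentially the same route as the paper, which does not give a proof but simply states that the lemma ``can be obtained similar to Lemma~9 in~\cite{MGY21}''; your plan spells out precisely this route via the transformation $G_\mu$ to the Lipschitz SDE~\eqref{eqn:basicTildeSDE}. One small caution: the phrase ``stochastic integrals are defined pathwise on a full-measure set via the usual dyadic approximation'' is imprecise---It\^o integrals are not pathwise objects---so the joint measurability of the Picard iterates should instead be argued on the canonical Wiener space, where each iterate $Y^{(k)}(z,\cdot)$ is a Borel function of $\omega$ and the dependence on $z$ is continuous (indeed Lipschitz, by the estimate of Lemma~\ref{lemma:L1DistnaceOfStrongSolutions:differenceOfInitialValue}), which yields joint Borel measurability without any pathwise interpretation.
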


We will later transform a solution of the SDE \eqref{eqn:basicSDE} to a solution of an SDE which satisfies classical assumptions. In the classical setting we can then use that the distance of two solutions can be controlled by the distance of the initial values as can be seen in the next lemma.

\begin{lemma} \label{lemma:L1DistnaceOfStrongSolutions:differenceOfInitialValue}
	Let $\widetilde{\mu}, \widetilde{\sigma} \colon \mathbb{R} \rightarrow \mathbb{R}$ be Lipschitz-continuous functions. Then there exists a constant $C > 0$ such that for all complete probability spaces $(\Omega, \mathcal{F}, \mathbb{P})$, all $\tau \in (0,1]$, all Brownian motions $V \colon [0, \tau] \times \Omega \rightarrow \mathbb{R}$, all $y, \tilde{y} \in \mathbb{R}$ and all strong solutions $Y^y, Y^{\tilde{y}} \colon \Omega \times [0,\tau] \rightarrow \mathbb{R}$ of the SDE
	\begin{flalign*}
		dY_t = \widetilde{\mu}(Y_t) \: dt + \widetilde{\sigma}(Y_t) \: dV_t
	\end{flalign*}
	with initial values $Y^y_0 = y$ and $Y^{\tilde{y}}_0 = \tilde{y}$ it holds
	\begin{flalign*}
		\EE |Y_s^y - Y_s^{\tilde{y}}| \leq C |y - \tilde{y}|, \quad s \in [0, \tau].
	\end{flalign*}
\end{lemma}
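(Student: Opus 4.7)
The plan is to follow the classical Gronwall argument for pathwise stability of SDEs with Lipschitz coefficients, working first in $L^2$ and then passing to $L^1$ via Jensen's inequality. Let $L_{\widetilde{\mu}}, L_{\widetilde{\sigma}} > 0$ be Lipschitz constants for $\widetilde{\mu}$ and $\widetilde{\sigma}$. For $s \in [0, \tau]$ I would write
\begin{flalign*}
Y_s^y - Y_s^{\tilde{y}} = (y - \tilde{y}) + \int_0^s \big(\widetilde{\mu}(Y_r^y) - \widetilde{\mu}(Y_r^{\tilde{y}})\big)\,dr + \int_0^s \big(\widetilde{\sigma}(Y_r^y) - \widetilde{\sigma}(Y_r^{\tilde{y}})\big)\,dV_r
\end{flalign*}
and bound the square using $(a+b+c)^2 \leq 3(a^2+b^2+c^2)$.

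Next I would take expectations. To the drift term I apply the Cauchy--Schwarz inequality in time together with the Lipschitz property of $\widetilde{\mu}$, obtaining a bound of the form $s L_{\widetilde{\mu}}^2 \int_0^s \EE|Y_r^y - Y_r^{\tilde{y}}|^2\,dr$. To the diffusion term I apply the It\^o isometry and the Lipschitz property of $\widetilde{\sigma}$, obtaining $L_{\widetilde{\sigma}}^2 \int_0^s \EE|Y_r^y - Y_r^{\tilde{y}}|^2\,dr$. Using $s \leq \tau \leq 1$ and setting $\phi(s) := \EE|Y_s^y - Y_s^{\tilde{y}}|^2$, this yields an inequality of the form
\begin{flalign*}
\phi(s) \leq 3|y - \tilde{y}|^2 + 3\big(L_{\widetilde{\mu}}^2 + L_{\widetilde{\sigma}}^2\big) \int_0^s \phi(r)\,dr, \quad s \in [0, \tau].
\end{flalign*}
Standard linear-growth moment estimates for SDEs with Lipschitz coefficients guarantee that $\phi$ is finite and bounded on $[0, \tau]$, so Gronwall's inequality gives $\phi(s) \leq 3 e^{3(L_{\widetilde{\mu}}^2 + L_{\widetilde{\sigma}}^2)} |y - \tilde{y}|^2$ for all $s \in [0, \tau]$. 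Jensen's inequality then produces $\EE|Y_s^y - Y_s^{\tilde{y}}| \leq C |y - \tilde{y}|$ with $C := \sqrt{3}\, e^{3(L_{\widetilde{\mu}}^2 + L_{\widetilde{\sigma}}^2)/2}$.

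The key point to check carefully is that this constant $C$ depends \emph{only} on the Lipschitz constants of $\widetilde{\mu}, \widetilde{\sigma}$, and not on $\tau \in (0,1]$, on the probability space $(\Omega, \F, \PP)$, on the Brownian motion $V$, or on the initial values. This uniformity is automatic from the argument above since $\tau$ enters only through $s \leq \tau \leq 1$, which we absorbed into the constant. The only real obstacle is verifying the a priori finiteness of $\phi(s)$ needed to apply Gronwall, but this is standard: the Lipschitz assumption on $\widetilde{\mu}, \widetilde{\sigma}$ implies linear growth, from which the usual moment estimate $\EE[\sup_{r \in [0,\tau]} |Y_r^y|^2] < \infty$ follows, and similarly for $Y^{\tilde{y}}$.
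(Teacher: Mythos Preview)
Your proposal is correct and follows essentially the same approach as the paper: the paper's proof simply invokes the inequality $\EE|X| \leq (\EE|X|^2)^{1/2}$ together with the standard $L^2$-stability argument for Lipschitz SDEs (referenced there as Theorem~9.2.4 in Revuz--Yor), which is precisely the Gronwall computation you have written out in detail.
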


\begin{proof}
	Let $(\Omega, \mathcal{F}, \mathbb{P})$ be a complete probability space. The claim follows with the basic inequality $\EE|X| \leq [\EE |X|^2]^{1 \slash 2}$ for a random variable $X$ and with standard arguments as in the proof of Theorem 9.2.4 in~\cite{RevuzYor1999}.
\end{proof}

\subsubsection{On the transformation}\label{subsubsection:transformation}
Let us continue with the introduction of the transformation $G_\mu$ for a $(\mu 1)$-function $\mu$. The next lemma is a straightforward generalization of \mbox{Lemma 1} in~\cite{MGY19b} as well as Lemma 3 in~\cite{MGSY22} and its proof.

\begin{lemma} \label{lemma:basicPropertiesOfG}
Let $\mu$ be a function satisfying $(\mu1)$. Then there exists a function $G_{\mu}$ which is a strictly monotonically increasing Lipschitz-continuous function with the following properties:
\begin{itemize}
\item[(i)] $G_{\mu}$ is differentiable and has a bounded Lipschitz-continuous derivative $G_{\mu}^\prime$ with 
		 \\ $\inf_{x\in \mathbb{R}} G_{\mu}^\prime(x)> 0$,
\item[(ii)] $G_{\mu}$ has a Lipschitz-continuous inverse $G_{\mu}^{-1} \colon \mathbb{R} \rightarrow \mathbb{R}$,
\item[(iii)] for all $i \in \{1, \dots, k+1\}$ the function $G_{\mu}^\prime$ restricted to $(\xi_{i-1}, \xi_i)$ is differentiable and has a bounded and Lipschitz-continuous derivative $G_{\mu}^{\prime \prime}$,
\item[(iv)] for all $i \in \{1, \dots, k\}$ the limits $G_\mu^{\prime \prime}(\xi_i -) = \lim_{x \uparrow \xi_i} G_\mu^{\prime \prime}(x)$ and $G_\mu^{\prime \prime}(\xi_i +) = \lim_{x \downarrow \xi_i} G_\mu^{\prime \prime}(x)$ exist and it holds
\begin{flalign*}
G_\mu^{\prime \prime}(\xi_i -) = - (\mu(\xi_i -) - \mu(\xi_i +)), \quad G_\mu^{\prime \prime}(\xi_i +) =  \mu(\xi_i -) - \mu(\xi_i +),
\end{flalign*}
\item[(v)] $G_{\mu}^{-1}$ is differentiable and has a Lipschitz-continuous derivative $(G_{\mu}^{-1})^\prime$ and it holds
\begin{flalign*}
(G_{\mu}^{-1})^\prime(x) = \frac{1}{G_{\mu}^\prime(G_{\mu}^{-1}(x))}, \quad x \in \mathbb{R},
\end{flalign*}
\item[(vi)] for all $x \in \mathbb{R} \setminus \{G_\mu(\xi_i) : i \in \{1, \dots, k\}\}$ the derivative $(G_{\mu}^{-1})^\prime$ is differentiable in $x$ and its derivative is given by 
\begin{flalign*}
(G_{\mu}^{-1})^{\prime \prime}(x) = -\frac{G_{\mu}^{\prime \prime}(G_{\mu}^{-1}(x))}{(G_{\mu}^\prime(G_{\mu}^{-1}(x)))^3},  \quad x \in \mathbb{R}.
\end{flalign*} 
\end{itemize}
\end{lemma}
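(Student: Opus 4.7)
The plan is to construct $G_\mu$ explicitly as a piecewise smooth perturbation of the identity, localized around each discontinuity $\xi_i$, that absorbs precisely the second-derivative jumps prescribed in (iv). By Lemma~\ref{lemma:functionalsWelldefinedMuOfLinearGrowth}(i) the one-sided limits $\mu(\xi_i\pm)$ exist, so $\alpha_i := \mu(\xi_i-)-\mu(\xi_i+)$ is a finite real number for each $i\in\{1,\dots,k\}$.

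The building block is $g(x)=\tfrac12\,\sgn(x)\,x^2$, which satisfies $g'(x)=|x|$ on $\mathbb{R}$ and, restricted to each open half-line, $g''(x)=\sgn(x)$, so $g''(0-)=-1$ and $g''(0+)=1$. To localize this correction and prevent interaction between different discontinuities, I would fix a cutoff $\phi\in C_c^\infty(\mathbb{R})$ with $\phi\equiv 1$ on $[-1/2,1/2]$ and $\operatorname{supp}(\phi)\subset[-1,1]$, set $g_h(x)=\tfrac12\,\sgn(x)\,x^2\,\phi(x/h)$ for a small parameter $h>0$, and define
\begin{flalign*}
G_\mu(x) := x + \sum_{i=1}^k \alpha_i\, g_h(x-\xi_i).
\end{flalign*}
The parameter $h$ is then chosen small enough that (a) the intervals $[\xi_i-h,\xi_i+h]$ are pairwise disjoint and (b) $\sum_{i=1}^k|\alpha_i|\,\|g_h'\|_\infty \le 1/2$; a direct expansion shows $\|g_h'\|_\infty=O(h)$ as $h\downarrow 0$, so both conditions can be met simultaneously since $k$ is finite.

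With this $G_\mu$ the verification proceeds piece by piece. Condition (b) gives $G_\mu'(x)\ge 1/2$ for all $x\in\mathbb{R}$, which yields strict monotonicity, the uniform lower bound in (i), the Lipschitz inverse in (ii), and, via the inverse function theorem, both (v) and (vi). On each interval $(\xi_{i-1},\xi_i)$ the function $G_\mu$ agrees with the identity plus at most one smooth summand, so (iii) holds together with boundedness and Lipschitz-continuity of $G_\mu'$ on $\mathbb{R}$. The main point is (iv): since the supports are disjoint, near $\xi_i$ only the $i$-th summand contributes, and $g_h(0)=g_h'(0)=0$ with $g_h''(0\pm)=\pm1$, so
\begin{flalign*}
G_\mu''(\xi_i\pm) = \alpha_i\,g_h''(0\pm) = \pm\alpha_i = \pm\bigl(\mu(\xi_i-)-\mu(\xi_i+)\bigr),
\end{flalign*}
which is exactly (iv). As a side effect one also obtains $G_\mu'(\xi_i)=1$, which is the reason this normalization of the jump is the right one for rendering the transformed drift $\widetilde\mu$ continuous across $\xi_i$.

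The only step that is not a routine derivative computation is verifying that $g_h'$ is Lipschitz \emph{across} the origin: from the formula $g_h'(x)=|x|\phi(x/h)+\tfrac12\sgn(x)x^2 h^{-1}\phi'(x/h)$ one sees that both summands vanish at $0$ and have uniformly bounded difference quotients on a neighborhood of $0$, so $g_h'$ is globally Lipschitz; everything else reduces to standard calculus on pieces where the function is $C^\infty$. This matches the paper's description of the lemma as a straightforward generalization of Lemma~1 in~\cite{MGY19b} and Lemma~3 in~\cite{MGSY22}.
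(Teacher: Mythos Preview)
Your construction is correct and is precisely the standard Leobacher--Sz\"olgyenyi / M\"uller-Gronbach--Yaroslavtseva transform that the paper cites (indeed $\tfrac12\sgn(x)x^2=\tfrac12 x|x|$, which is the form used in \cite{MGY19b} and \cite{MGSY22}); the paper does not spell out a proof but defers to those references, so your explicit argument fills in exactly what is intended. One minor wording slip: on $(\xi_{i-1},\xi_i)$ up to \emph{two} summands can be nonzero (one near each endpoint), not ``at most one'', but since the supports are disjoint only one is nonzero at any given point and both are $C^\infty$ on the open interval, so your verification of (iii) goes through unchanged.
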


Subsequently, the function $G_\mu$ is the function of Lemma \ref{lemma:basicPropertiesOfG} for a $(\mu 1)$-function $\mu$. Moreover, for a $(\mu 1)$-function $\mu$ we extend in consideration of Lemma \ref{lemma:basicPropertiesOfG} the second derivatives
\\$G_{\mu}^{\prime \prime} \colon \cup_{i=1}^{k+1} (\xi_{i-1}, \xi_i) \rightarrow \mathbb{R}$ and $(G_{\mu}^{-1})^{\prime \prime} \colon \mathbb{R} \setminus \{G(\xi_i) : i \in \{1, \dots, k\}\} \rightarrow \mathbb{R}$ to the whole line as in~\cite{MGY19b} via 
\begin{flalign*}
&G_{\mu}^{\prime \prime}(\xi_i) := (\mu(\xi_i -) - \mu(\xi_i +)) + 2(\mu(\xi_i +) - \mu(\xi_i)), \quad i \in \{1, \dots, k\},
\\&(G_{\mu}^{-1})^{\prime \prime}(G_\mu(\xi_i)) := -\frac{G_{\mu}^{\prime \prime}(\xi_i)}{(G_{\mu}^\prime(\xi_i))^3}, \quad i \in \{1, \dots, k\}.
\end{flalign*}

As already mentioned, it is of key importance that the mapping $G_\mu$ transforms a solution of the SDE \eqref{eqn:basicSDE} into a solution of an SDE with Lipschitz-continuous coefficients. The corresponding statement and the exact form of the transformed coefficients can be seen in the following lemma. The next lemma follows from Lemma 2 in~\cite{MGY19b} and the proof of Lemma 9 in~\cite{MGY21}.

\begin{lemma} \label{lemma:GTransformsStrongSolutions}
Let $\mu$ be a function satisfying $(\mu 1)$ and let
\begin{flalign*}
\widetilde{\mu} := (G_\mu^\prime \cdot \mu + \frac{1}{2}G_{\mu}^{\prime \prime}) \circ G_{\mu}^{-1} \quad \text{and} \quad \widetilde{\sigma} := G_{\mu}^\prime \circ G_{\mu}^{-1}.
\end{flalign*}
Then $\widetilde{\mu}$ and $\widetilde{\sigma}$ are Lipschitz-continuous and we have for every $T \in (0, \infty)$, every complete probability space $(\Omega, \mathcal{F}, \mathbb{P})$, every Brownian motion $W \colon [0,T]\times \Omega \rightarrow \mathbb{R}$ and every random variable $\eta: \Omega \rightarrow \mathbb{R}$ such that $W,\eta$ are independent:
\begin{itemize}
\item[(i)] if $X \colon [0,T] \times \Omega \rightarrow \mathbb{R}$ is a strong solution of the SDE
\begin{flalign*}
dX_t = \widetilde{\mu}(X_t) \: dt + \widetilde{\sigma}(X_t) \: dW_t 
\end{flalign*}
on the time interval $[0,T]$ with driving Brownian motion $W$ and initial value $\eta$, then $G_{\mu}^{-1} \circ X$ is a strong solution of the SDE \eqref{eqn:basicSDE} on the time interval $[0,T]$ with driving Brownian motion $W$ and initial value $G_{\mu}^{-1}(\eta)$,

\item[(ii)] if $X \colon [0,T] \times \Omega \rightarrow \mathbb{R}$ is a strong solution of the SDE \eqref{eqn:basicSDE} on the time interval $[0,T]$ with driving Brownian motion $W$ and initial value $\eta$, then $G_{\mu} \circ X$ is a strong solution of the SDE \eqref{eqn:basicTildeSDE} on the time interval $[0,T]$ with driving Brownian motion $W$ and initial value $G_{\mu}(\eta)$. 
\end{itemize}
\end{lemma}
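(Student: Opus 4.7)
The plan is to prove the Lipschitz continuity of $\widetilde{\mu}$ and $\widetilde{\sigma}$ first, and then derive (i) and (ii) from Itô-Meyer type formulas applied to $G_\mu$ and $G_\mu^{-1}$ respectively, exploiting the regularity collected in Lemma \ref{lemma:basicPropertiesOfG}. For $\widetilde{\sigma}=G_\mu'\circ G_\mu^{-1}$, both components are globally Lipschitz (parts (i),(ii)), so the composition is. For $\widetilde{\mu}$, write $h:=G_\mu'\cdot\mu+\tfrac{1}{2}G_\mu''$. On each piece $(\xi_{i-1},\xi_i)$, $\mu$ is Lipschitz by $(\mu 1)$, $G_\mu'$ is bounded Lipschitz, and $G_\mu''$ is bounded Lipschitz, so $h$ is Lipschitz on that piece. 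At each $\xi_i$, Lemma \ref{lemma:basicPropertiesOfG}(iv) arranges that the jump of $\tfrac{1}{2}G_\mu''$ cancels the jump of $G_\mu'\cdot\mu$ (the construction of $G_\mu$ enforces $G_\mu'(\xi_i)=1$), so $h$ is continuous at $\xi_i$. Since $G_\mu=\mathrm{id}$ outside a compact neighborhood of $\{\xi_1,\dots,\xi_k\}$, one then patches the finitely many pieces to conclude that $h$, and hence $\widetilde{\mu}=h\circ G_\mu^{-1}$, is globally Lipschitz.

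For direction (ii), let $X$ be a strong solution of \eqref{eqn:basicSDE} and apply the Itô-Meyer formula to $G_\mu(X_\cdot)$, which is legitimate because $G_\mu\in C^1$ with Lipschitz derivative $G_\mu'$. The quadratic-variation term reads $\tfrac{1}{2}\int_0^t G_\mu''(X_s)\,ds$, and is unambiguous despite $G_\mu''$ being discontinuous at the finitely many points $\xi_i$, because $X_s$ admits a Lebesgue density for $s>0$ (as already used in Lemma \ref{lemma:boundProbability:SolutionIsInInterval}), so $\{s\in(0,t]:X_s\in\{\xi_1,\dots,\xi_k\}\}$ has Lebesgue measure zero almost surely. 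Substituting $dX_s=\mu(X_s)\,ds+dW_s$ and rewriting the integrands through $X_s=G_\mu^{-1}(G_\mu(X_s))$ exhibits $G_\mu\circ X$ as a strong solution of \eqref{eqn:basicTildeSDE}. Direction (i) is obtained symmetrically: apply the same formula to $G_\mu^{-1}(X_\cdot)$ for $X$ a strong solution of \eqref{eqn:basicTildeSDE}, and use the explicit expressions for $(G_\mu^{-1})'$ and $(G_\mu^{-1})''$ in Lemma \ref{lemma:basicPropertiesOfG}(v),(vi) to verify by direct algebra that the resulting coefficients reduce to $\mu(G_\mu^{-1}(X_t))$ and $1$.

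The main obstacle is justifying the Itô-Meyer step for $G_\mu$: the classical Itô formula does not apply directly since $G_\mu''$ is merely defined almost everywhere and has jumps at the $\xi_i$. This can be handled either by mollification $G_\mu^{(n)}\to G_\mu$ with $(G_\mu^{(n)})''$ uniformly bounded and convergent a.e., as in Lemma 2 of \cite{MGY19b}, or by invoking the Itô-Tanaka formula and using the Lebesgue-null argument above to discard the contribution at the $\xi_i$'s. Either route mirrors the proof of Lemma 2 of \cite{MGY19b} together with Lemma 9 of \cite{MGY21}; crucially, no part of it exploits boundedness of $\mu$, only the linear growth and piecewise-Lipschitz structure guaranteed by $(\mu 1)$, which is what makes the present generalization go through.
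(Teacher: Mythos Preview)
Your proposal is correct and takes essentially the same approach as the paper, which simply defers to Lemma~2 in \cite{MGY19b} and the proof of Lemma~9 in \cite{MGY21}; you have spelled out exactly the argument those references contain (Lipschitz continuity via the jump-cancellation built into $G_\mu$, then the It\^o--Meyer/mollification step). One small remark: when you invoke Lemma~\ref{lemma:boundProbability:SolutionIsInInterval} to justify that $\{s:X_s\in\{\xi_i\}\}$ is Lebesgue-null, that lemma is stated for the additive-noise SDE \eqref{eqn:basicSDE}, so it covers direction (ii) directly; for direction (i) you need the analogous density statement for solutions of \eqref{eqn:basicTildeSDE}, which follows since $\widetilde{\sigma}$ is bounded away from zero by Lemma~\ref{lemma:basicPropertiesOfG}(i).
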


Afterwards, we will always use the expressions $\widetilde{\mu}$ and $\widetilde{\sigma}$ for the transformed coefficients from Lemma \ref{lemma:GTransformsStrongSolutions} when we deal with a function $\mu$ satisfying $(\mu 1)$. In order to come back from the transformed SDE with Lipschitz-continuous coefficients to the original SDE with $\sigma = 1$, we will later apply the next lemma. The lemma follows from Lemma \ref{lemma:basicPropertiesOfG} with elementary computations.

\begin{lemma} \label{lemma:fromTildeToMu}
Let $\mu \colon \mathbb{R} \rightarrow \mathbb{R}$ be a function satisfying $(\mu 1)$ and let $\widetilde{\mu}, \widetilde{\sigma}$ be given as in Lemma \ref{lemma:GTransformsStrongSolutions}. Then for all $x \in \mathbb{R}$ it holds
\begin{flalign*}
(G_{\mu}^{-1})^\prime(x) \cdot \widetilde{\mu}(x) + \frac{1}{2}(G_{\mu}^{-1})^{\prime \prime}(x) \cdot
\widetilde{\sigma}^2(x) = \mu(G_{\mu}^{-1}(x)) \quad \text{and} \quad (G_{\mu}^{-1})^\prime(x) \cdot \widetilde{\sigma}(x) = 1.
\end{flalign*}
\end{lemma}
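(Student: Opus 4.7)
The lemma is a direct consequence of the chain rule / inverse function formulas collected in Lemma \ref{lemma:basicPropertiesOfG}, combined with the definitions of $\widetilde{\mu}$ and $\widetilde{\sigma}$ from Lemma \ref{lemma:GTransformsStrongSolutions}. My plan is to substitute all four formulas into the left-hand side of the first identity and verify that the $G_\mu^{\prime\prime}$-contributions cancel.

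The second identity is essentially immediate. Since $\widetilde{\sigma}(x) = G_\mu^\prime(G_\mu^{-1}(x))$ by definition and $(G_\mu^{-1})^\prime(x) = 1/G_\mu^\prime(G_\mu^{-1}(x))$ by Lemma \ref{lemma:basicPropertiesOfG}(v), their product equals $1$. I would dispatch this in a single line.

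For the first identity, I would expand
\[
(G_\mu^{-1})^\prime(x)\,\widetilde{\mu}(x) = \frac{1}{G_\mu^\prime(G_\mu^{-1}(x))}\Bigl( G_\mu^\prime(G_\mu^{-1}(x))\,\mu(G_\mu^{-1}(x)) + \tfrac{1}{2} G_\mu^{\prime\prime}(G_\mu^{-1}(x))\Bigr) = \mu(G_\mu^{-1}(x)) + \tfrac{1}{2}\frac{G_\mu^{\prime\prime}(G_\mu^{-1}(x))}{G_\mu^\prime(G_\mu^{-1}(x))},
\]
and then add the second contribution, using Lemma \ref{lemma:basicPropertiesOfG}(vi) for $(G_\mu^{-1})^{\prime\prime}$ and $\widetilde{\sigma}^2(x) = (G_\mu^\prime(G_\mu^{-1}(x)))^2$:
\[
\tfrac{1}{2}(G_\mu^{-1})^{\prime\prime}(x)\,\widetilde{\sigma}^2(x) = -\tfrac{1}{2}\frac{G_\mu^{\prime\prime}(G_\mu^{-1}(x))}{(G_\mu^\prime(G_\mu^{-1}(x)))^3}\cdot (G_\mu^\prime(G_\mu^{-1}(x)))^2 = -\tfrac{1}{2}\frac{G_\mu^{\prime\prime}(G_\mu^{-1}(x))}{G_\mu^\prime(G_\mu^{-1}(x))}.
\]
The two $G_\mu^{\prime\prime}$-terms cancel, leaving exactly $\mu(G_\mu^{-1}(x))$, as required.

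The one point where I would pause, and which I view as the main (though still minor) obstacle, is the behaviour at the exceptional points $x = G_\mu(\xi_i)$, where Lemma \ref{lemma:basicPropertiesOfG}(vi) does not directly apply and both $G_\mu^{\prime\prime}$ and $(G_\mu^{-1})^{\prime\prime}$ are only defined via the extension formulas displayed immediately after Lemma \ref{lemma:basicPropertiesOfG}. These extensions were chosen precisely so that the algebraic relation $(G_\mu^{-1})^{\prime\prime}(G_\mu(\xi_i)) = -G_\mu^{\prime\prime}(\xi_i)/(G_\mu^\prime(\xi_i))^3$ persists at these points; the cancellation above therefore goes through verbatim, and the identity holds for every $x \in \mathbb{R}$.
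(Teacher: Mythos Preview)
Your proposal is correct and matches the paper's approach: the paper simply states that the lemma follows from Lemma \ref{lemma:basicPropertiesOfG} with elementary computations, which is exactly the substitution-and-cancellation argument you spell out. Your explicit treatment of the exceptional points $x = G_\mu(\xi_i)$ via the extension formulas is a welcome clarification that the paper leaves implicit.
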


\subsubsection{On the coupling of noise}\label{subsubsection:couplingOfNoise}
The transformation $G_\mu$ comes into play together with the lower bound technique from~\cite{MGY21} which we will briefly introduce now. First of all, it holds for $m \in \mathbb{N}$ with $m \geq 2$
\begin{flalign*}
	\inf_{\substack{t_1, \dots, t_{m-1} \in [0,1] \\g \colon \mathbb{R}^{m-1} \rightarrow \mathbb{R} \: measurable}} &\EE |X_1 - g(W_{t_1}, \dots, W_{t_{m-1}})|
	\\&\geq \inf_{\substack{t_1, \dots, t_m \in [0,1] \\g \colon \mathbb{R}^m \rightarrow \mathbb{R} \: measurable}} \EE |X_1 - g(W_{t_1}, \dots, W_{t_m})|
	\\&\geq \inf_{\substack{t_1, \dots, t_m \in [0,1] \\g \colon \mathbb{R}^{2m} \rightarrow \mathbb{R} \: measurable}} \EE |X_1 - g(W_{t_1}, \dots, W_{t_m}, W_{1/m}, \dots, W_{(m-1)/m}, W_1)|.
\end{flalign*}

Therefore, it suffices for a proof of Theorem \ref{theorem:lowerBound:SDEs:discDrift} to compute with an $n \in 2\mathbb{N}$ such that $n \geq 16$ and to show the existence of some constant $c > 0$ such that $c$ is independent of $n$ and such that for all
\begin{flalign}
0 < t_1 < t_2 < \dots < t_n = 1 \label{eqn:thmLowerBoundOccupationTime:tiIn0To1}
\end{flalign}
with
\begin{flalign}
\{ \frac{2}{n}, \frac{4}{n}, \dots , \frac{n-2}{n}, 1\} \subset \{t_1, \dots, t_n\} \label{eqn:thmLowerBoundOccupationTime:2OverNInti}
\end{flalign}
it holds
\begin{flalign*}
\inf_{g \colon \mathbb{R}^n \rightarrow \mathbb{R} \: measurable} \EE|X_1 - g(W_{t_1}, \dots, W_{t_n})|\geq  \frac{c}{n^{3 \slash 4}}.
\end{flalign*}

So let us fix $t_1, \dots, t_n \in [0,1]$ with \eqref{eqn:thmLowerBoundOccupationTime:tiIn0To1} as well as \eqref{eqn:thmLowerBoundOccupationTime:2OverNInti} and put $t_0 := 0$. Every constant that we will derive afterwards will be independent from $n$ and from the specific choice of $t_1,\dots, t_n$ if not stated otherwise. We will use the linear interpolation $\overline{W}$ of the Brownian motion $W$ with sample positions at $t_1, \dots, t_n$ which is given for $t \in [t_{i-1}, t_i]$ with $i \in \{1, \dots, n\}$ by
\begin{flalign*}
\overline{W}_t = \frac{t - t_{i-1}}{t_i - t_{i-1}} W_{t_i} + \frac{t_i - t}{t_i - t_{i-1}} W_{t_{i-1}}.
\end{flalign*}
Obviously, $t_1, \dots, t_n$ are coincident points of the processes $W$ and $\overline{W}$. Between the sample points  $t_1, \dots, t_n$ the process $W - \overline{W}$ behaves like a Brownian bridge. To be more formal, let us define
\begin{flalign*}
B := W - \overline{W}.
\end{flalign*}
Now the process $(B_t)_{t \in [t_{i-1}, t_i]}$ is a Brownian bridge on $[t_{i-1}, t_i]$ for all $i \in \{1, \dots, n\}$. Moreover, the processes $(B_t)_{t \in [t_0, t_1]}, (B_t)_{t \in [t_1, t_2]}, \dots, (B_t)_{t \in [t_{n-1},t_n]}, \overline{W}$ are independent. With this in mind, we replace the Brownian bridges with new independent ones. Therefore, let $(\widetilde{B}_t)_{t \in [t_{i-1}, t_i]}$ be a Brownian bridge on $[t_{i-1}, t_i]$ for $i \in \{1, \dots, n\}$ such that
\\\centerline{$(\widetilde{B}_t)_{t \in [t_0, t_1]}, (\widetilde{B}_t)_{t \in [t_1, t_2]}, \dots,$ $ (\widetilde{B}_t)_{t \in [t_{n-1},t_n]}, W$}
are independent. Let us put $\widetilde{B} = (\widetilde{B}_t)_{t \in [0,1]}$ and define because of $W = \overline{W} + B$ a new Brownian motion $\widetilde{W}$ by
\begin{flalign*}
\widetilde{W} := \overline{W} + \widetilde{B}.
\end{flalign*}

The following lemma shows us why the new Brownian motion $\widetilde{W}$ is of interest for the study of lower bounds. The next statement corresponds to Lemma 11 in~\cite{MGY21}.

\begin{lemma}\label{lemma:lowerBound:infBoundedByTildeDistance}
Let $\mu \colon \mathbb{R} \rightarrow \mathbb{R}$ be a function satisfying $(\mu 1)$. Let $x_0 \in \mathbb{R}$ and $X,\widetilde{X} \colon [0,1] \times \Omega \rightarrow \mathbb{R}$ be strong solutions of the SDE \eqref{eqn:basicSDE} on the time interval $[0,1]$ with initial value $x_0$ and driving Brownian motion $W$ and $\widetilde{W}$, respectively. Then for every measurable function $g \colon \mathbb{R}^n \rightarrow \mathbb{R}$ and for every $p \in [1, \infty)$ it holds
\begin{flalign*}
\big( \EE [|X_1 - g(W_{t_1}, \dots, W_{t_n})|^p] \big)^{1 \slash p} \geq \frac{1}{2} (\EE[|X_1 - \widetilde{X}_1|^p])^{1 \slash p}.
\end{flalign*}
\end{lemma}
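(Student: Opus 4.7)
The plan is to show that $g(W_{t_1},\dots,W_{t_n})$ lies at the same $L^p$-distance from $X_1$ as from $\widetilde{X}_1$; the triangle inequality will then immediately yield the claim with factor $1/2$. The heart of the argument is thus an exchangeability statement: the joint distributions of $(X_1, W_{t_1},\dots,W_{t_n})$ and $(\widetilde{X}_1, W_{t_1},\dots,W_{t_n})$ agree.

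I would first verify that $\widetilde{W}$ is again a Brownian motion. The standard Brownian-bridge decomposition of $W$ relative to the sample points $t_0,\dots,t_n$ says that $\overline{W}$ and $B$ are independent, and by construction $\widetilde{B}$ is independent of $W$ (in particular of $\overline{W}$) and has the same law as $B$. Therefore $(\overline{W},B)$ and $(\overline{W},\widetilde{B})$ coincide in distribution, and hence $\widetilde{W} = \overline{W}+\widetilde{B}$ has the same law as $\overline{W}+B=W$; in particular, $\widetilde{W}$ is a Brownian motion.

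Next I would apply Lemma \ref{lemma:existenceOfFunctionF} to obtain a Borel map $F$ such that almost surely $X = F(x_0,W)$ and $\widetilde{X} = F(x_0,\widetilde{W})$. Since $W$ and $\widetilde{W}$ share the same piecewise-linear interpolation $\overline{W}$, we have $W_{t_i}=\widetilde{W}_{t_i}$ for every $i \in \{1,\dots,n\}$, and therefore
\[
g(W_{t_1},\dots,W_{t_n}) \;=\; g(\widetilde{W}_{t_1},\dots,\widetilde{W}_{t_n}).
\]
Consequently $|\widetilde{X}_1 - g(W_{t_1},\dots,W_{t_n})|^p$ is a fixed measurable functional of $\widetilde{W}$ alone, while $|X_1 - g(W_{t_1},\dots,W_{t_n})|^p$ is the same measurable functional applied to $W$. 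Because $W \stackrel{d}{=} \widetilde{W}$, taking expectations yields
\[
\EE |X_1 - g(W_{t_1},\dots,W_{t_n})|^p \;=\; \EE |\widetilde{X}_1 - g(W_{t_1},\dots,W_{t_n})|^p.
\]

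Finally, the $L^p$-triangle inequality combined with this identity gives
\[
\bigl(\EE |X_1 - \widetilde{X}_1|^p\bigr)^{1/p} \;\leq\; 2\bigl(\EE |X_1 - g(W_{t_1},\dots,W_{t_n})|^p\bigr)^{1/p},
\]
which is the desired bound. I do not foresee any genuine obstacle here; the only mildly delicate point is ensuring that the independence of $\widetilde{B}$ from $W$ (rather than merely from $\overline{W}$) really forces $(\overline{W},\widetilde{B}) \stackrel{d}{=} (\overline{W},B)$, and this is precisely the content of the Brownian-bridge decomposition recalled just before the lemma.
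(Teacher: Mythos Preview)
Your argument is correct and is precisely the standard coupling argument underlying this lemma: the paper does not supply its own proof here but simply cites Lemma~11 in~\cite{MGY21}, where the reasoning is essentially what you wrote---use the functional representation from Lemma~\ref{lemma:existenceOfFunctionF}, the identity $W_{t_i}=\widetilde W_{t_i}$, and $\mathbb{P}^W=\mathbb{P}^{\widetilde W}$ to get $\EE|X_1-g(W_{t_1},\dots,W_{t_n})|^p=\EE|\widetilde X_1-g(W_{t_1},\dots,W_{t_n})|^p$, then apply the $L^p$-triangle inequality.
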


We will apply the measurable mapping $F$ from Lemma \ref{lemma:existenceOfFunctionF} in the context of strong solutions of the SDE \eqref{eqn:basicSDE} with driving Brownian motion $W$ and $\widetilde{W}$, respectively. This will be done using the next statement which also says that the future behavior of the Brownian motion is independent of the strong solution of the SDE at the current point of time. The following lemma can be shown with the same arguments used in the proof of Lemma 13 in~\cite{MGY21}.

\begin{lemma} \label{lemma:representationOfXWithF}
Let $\mu \colon \mathbb{R} \rightarrow \mathbb{R}$ be a function satisfying $(\mu 1)$. Let $x_0 \in \mathbb{R}$ and $X,\widetilde{X} \colon [0,1] \times \Omega \rightarrow \mathbb{R}$ be strong solutions of the SDE \eqref{eqn:basicSDE} on the time interval $[0,1]$ with initial value $x_0$ and driving Brownian motion $W$ and $\widetilde{W}$, respectively. Let $i \in \{1, \dots, n\}$, $V := (V_t = W_{t_{i-1} + t} - W_{t_{i-1}})_{t \in [0, t_i - t_{i-1}]}$ and $\widetilde{V} := (\widetilde{V}_t = \widetilde{W}_{t_{i-1} + t} - \widetilde{W}_{t_{i-1}})_{t \in [0, t_i - t_{i-1}]}$. Then the processes $(X_{t_{i-1}}, \widetilde{X}_{t_{i-1}})$ and $(V, \widetilde{V})$ are independent. Moreover,  with $F \colon \mathbb{R} \times C([0, t_i - t_{i-1}], \mathbb{R}) \rightarrow C([0, t_i - t_{i-1}], \mathbb{R})$ as introduced in Lemma \ref{lemma:existenceOfFunctionF} it holds $\mathbb{P}$-almost surely
\begin{flalign*}
(X_{t_{i-1} + t})_{t \in [0, t_i - t_{i-1}]} &= F(X_{t_{i-1}}, V),
\\
(\widetilde{X}_{t_{i-1} + t})_{t \in [0, t_i - t_{i-1}]} &= F(\widetilde{X}_{t_{i-1}}, \widetilde{V}).
\end{flalign*}
\end{lemma}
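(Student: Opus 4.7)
The plan is to prove the two claims of the lemma in sequence: first the joint independence of $(X_{t_{i-1}}, \widetilde{X}_{t_{i-1}})$ from $(V, \widetilde{V})$, then the functional representations, which will follow from the independence together with Lemma \ref{lemma:existenceOfFunctionF}.

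For the independence, I would introduce a ``past'' and a ``future'' $\sigma$-algebra,
\[
\mathcal{G}_{i-1} := \sigma\bigl((W_s)_{s \leq t_{i-1}},\, (\widetilde{B}_s)_{s \leq t_{i-1}}\bigr), \qquad \mathcal{H}_i := \sigma\bigl((W_{t_{i-1}+t} - W_{t_{i-1}})_{t \in [0, t_i - t_{i-1}]},\, (\widetilde{B}_s)_{s \in [t_{i-1}, t_i]}\bigr),
\]
and argue that $\mathcal{G}_{i-1}$ and $\mathcal{H}_i$ are independent: this uses the Markov property of $W$ together with the construction of $\widetilde{B}$ as a family of Brownian bridges on the grid intervals that are mutually independent and jointly independent of $W$. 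Now $X_{t_{i-1}}$ is $\sigma((W_s)_{s\le t_{i-1}})$-measurable, and for $s \le t_{i-1}$ the identity $\widetilde{W}_s = \overline{W}_s + \widetilde{B}_s$ expresses $\widetilde{W}_s$ as a measurable function of $(W_{t_j})_{j \le i-1}$ and $(\widetilde{B}_r)_{r \le t_{i-1}}$, so $\widetilde{X}_{t_{i-1}}$ is also $\mathcal{G}_{i-1}$-measurable. On the other hand $V \in \mathcal{H}_i$ is immediate, and using $\widetilde{B}_{t_{i-1}} = 0$ together with the explicit form of $\overline{W}$ on $[t_{i-1}, t_i]$ one computes, for $t \in [0, t_i - t_{i-1}]$,
\[
\widetilde{V}_t = \widetilde{W}_{t_{i-1}+t} - \widetilde{W}_{t_{i-1}} = \frac{t}{t_i - t_{i-1}}\bigl(W_{t_i} - W_{t_{i-1}}\bigr) + \widetilde{B}_{t_{i-1}+t},
\]
so $\widetilde{V}$ is $\mathcal{H}_i$-measurable as well. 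The claimed independence then follows from independence of $\mathcal{G}_{i-1}$ and $\mathcal{H}_i$.

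For the functional representation, restricting the SDE \eqref{eqn:basicSDE} to the interval $[t_{i-1}, t_i]$ and shifting time by $t_{i-1}$ shows that $(X_{t_{i-1}+t})_{t \in [0, t_i - t_{i-1}]}$ is a strong solution of \eqref{eqn:basicSDE} on $[0, t_i - t_{i-1}]$ with driving Brownian motion $V$ and (random) initial value $X_{t_{i-1}}$. Projecting the independence just established onto the relevant coordinates, $X_{t_{i-1}}$ is independent of $V$, so Lemma \ref{lemma:existenceOfFunctionF}(i) applied with $T = t_i - t_{i-1}$ and $\eta = X_{t_{i-1}}$ yields $\mathbb{P}$-a.s.\ the identity $(X_{t_{i-1}+t})_{t \in [0, t_i - t_{i-1}]} = F(X_{t_{i-1}}, V)$. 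The same reasoning applied to $\widetilde{X}$ with driving Brownian motion $\widetilde{W}$—which is a Brownian motion by construction, so that $\widetilde{V}$ is a Brownian motion on $[0, t_i - t_{i-1}]$—gives the second equality.

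The main obstacle I expect is the bookkeeping in the independence step: $\widetilde{V}$ is an increment of $\widetilde{W}$, and $\widetilde{W}$ globally depends on all grid values of $W$ through $\overline{W}$. The key observation that makes the argument work is that only the single increment $W_{t_i} - W_{t_{i-1}}$ (together with the bridge $(\widetilde{B}_s)_{s \in [t_{i-1}, t_i]}$) actually enters $\widetilde{V}_t$, placing it inside $\mathcal{H}_i$ rather than inside $\mathcal{G}_{i-1}$. Once this is verified, the rest of the proof is routine.
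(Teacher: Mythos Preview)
Your proposal is correct and takes essentially the same approach the paper intends: the paper simply refers to the proof of Lemma~13 in \cite{MGY21}, which proceeds by exactly this kind of past/future $\sigma$-algebra decomposition together with the functional $F$ from Lemma~\ref{lemma:existenceOfFunctionF}. The key observation you single out --- that $\widetilde{V}_t$ depends only on the increment $W_{t_i}-W_{t_{i-1}}$ and the bridge $(\widetilde{B}_s)_{s\in[t_{i-1},t_i]}$, hence lies in $\mathcal{H}_i$ --- is precisely the point that makes the argument go through.
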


For the estimation of the diagonal terms $d_i$ we use the following bound of the maximum distance of two solutions of the SDE \eqref{eqn:basicSDE} with driving Brownian motion $W$ and $\widetilde{W}$, respectively, at the time points $t_1, \dots, t_n$.

\begin{lemma} \label{lemma:boundLPDistanceOfSolutionsAtInterpolation}
Let $\mu \colon \mathbb{R} \rightarrow \mathbb{R}$ be a function satisfying $(\mu 1)$ and $(\mu 2)$. Let $x_0 \in \mathbb{R}$ and $X, \widetilde{X}\colon [0,1] \times \Omega \rightarrow \mathbb{R}$ be strong solutions of the SDE \eqref{eqn:basicSDE} with initial value $x_0$ and driving Brownian motion $W$ and $\widetilde{W}$, respectively. Then for every $p \in [1, \infty)$ there exists a constant $C>0$ such that
\begin{flalign*}
\max_{i \in \{0, \dots, n\}} \big[ \EE |X_{t_i} - \widetilde{X}_{t_i}|^p \big]^{1 \slash p} \leq \frac{C}{n^{3 \slash 4}}.
\end{flalign*}
\end{lemma}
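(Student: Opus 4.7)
The plan is to apply the transformation $G_{\mu}$ from Lemma \ref{lemma:basicPropertiesOfG} to both solutions and to compare the resulting processes $Y := G_{\mu} \circ X$ and $\widetilde{Y} := G_{\mu} \circ \widetilde{X}$ against a common discrete approximation that depends on the driving Brownian motion only through its values at the grid points $t_0, t_1, \dots, t_n$. By Lemma \ref{lemma:GTransformsStrongSolutions}, $Y$ and $\widetilde{Y}$ are strong solutions of the SDE \eqref{eqn:basicTildeSDE} with the Lipschitz-continuous coefficients $\widetilde{\mu}, \widetilde{\sigma}$, driven respectively by $W$ and $\widetilde{W}$ and both starting from $G_{\mu}(x_0)$. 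Since $G_{\mu}^{-1}$ is globally Lipschitz by Lemma \ref{lemma:basicPropertiesOfG}(ii), the lemma reduces to the corresponding estimate $\max_{i \in \{0, \dots, n\}} [\EE |Y_{t_i} - \widetilde{Y}_{t_i}|^p]^{1/p} \leq C n^{-3/4}$ for the transformed processes.

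The key observation is that the one-dimensional Milstein scheme for \eqref{eqn:basicTildeSDE} on the partition $t_0 < t_1 < \dots < t_n$ is a deterministic function of the increments $W_{t_j} - W_{t_{j-1}}$, because the only iterated It\^o integral that arises collapses to $\tfrac{1}{2}((W_{t_j} - W_{t_{j-1}})^2 - (t_j - t_{j-1}))$. Denote by $\widehat{Y}^{W}$ and $\widehat{Y}^{\widetilde{W}}$ the Milstein approximations driven by $W$ and $\widetilde{W}$ respectively, where $\widetilde{\sigma}'$ is extended to $\mathbb{R}$ in accordance with the convention used after Lemma \ref{lemma:basicPropertiesOfG}. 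The identities $W_{t_j} = \widetilde{W}_{t_j}$, $j \in \{0, \dots, n\}$, which hold by the construction of $\widetilde{W}$, then force $\widehat{Y}^{W}_{t_i} = \widehat{Y}^{\widetilde{W}}_{t_i} =: \widehat{Y}_{t_i}$ at every grid point.

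The final step is to invoke the upper bound of~\cite{MGY19b}, which gives $\max_i \|Y_{t_i} - \widehat{Y}_{t_i}\|_p \leq C n^{-3/4}$ together with the analogous estimate for $\widetilde{Y}$, after which the triangle inequality $\|Y_{t_i} - \widetilde{Y}_{t_i}\|_p \leq \|Y_{t_i} - \widehat{Y}_{t_i}\|_p + \|\widetilde{Y}_{t_i} - \widehat{Y}_{t_i}\|_p$ produces the claim. The main technical point is that \eqref{eqn:rateOfTransformedMilstein} is phrased in~\cite{MGY19b} only at the terminal time and for an equidistant mesh, whereas the present argument requires the rate uniformly at every $t_i$ for partitions satisfying \eqref{eqn:thmLowerBoundOccupationTime:2OverNInti}. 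Since the mesh size is bounded by $2/n$ and the local Milstein error at step $j$ is of order $h_j^{5/4}$ in $L^p$ (the degradation from the classical order $h_j^{3/2}$ being caused by the piecewise Lipschitz-continuous derivative of $\widetilde{\sigma}$), a discrete Gr\"onwall recursion combined with $\sum_{j} h_j^{5/2} \leq (\max_j h_j)^{3/2} \leq C n^{-3/2}$ delivers the required uniform bound.
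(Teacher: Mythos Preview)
Your approach is essentially the same as the paper's: both compare $X$ and $\widetilde X$ to a common numerical approximation that depends on the driving noise only through the grid values $W_{t_0},\dots,W_{t_n}$ (which coincide with $\widetilde W_{t_0},\dots,\widetilde W_{t_n}$), and then use the triangle inequality together with the upper bound from~\cite{MGY19b}. The paper's proof simply refers to Lemma~12 in~\cite{MGY21} for this argument, replacing the boundedness of $\mu$ used there by the linear-growth estimate it spells out.

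The only substantive difference is how the gap between the setting of~\cite{MGY19b} (equidistant mesh, terminal time) and the present one (arbitrary $t_i$ with $t_i-t_{i-1}\le 2/n$) is bridged. You sketch a direct extension via a local error of order $h_j^{5/4}$ and a discrete Gr\"onwall recursion. The paper instead reduces to the equidistant subgrid $\{2j/n:j=0,\dots,n/2\}\subset\{t_0,\dots,t_n\}$ by the elementary bound
\[
\EE\bigl|X_{t_i}-X_{\underline{t_i}}-(\widetilde X_{t_i}-\widetilde X_{\underline{t_i}})\bigr|^p
=\EE\Bigl|\int_{\underline{t_i}}^{t_i}(\mu(X_s)-\mu(\widetilde X_s))\,ds\Bigr|^p
\le C\,n^{-p},
\]
where $\underline{t_i}$ is the nearest point of the equidistant subgrid below $t_i$; after this, the result of~\cite{MGY19b} applies verbatim at the points $\underline{t_i}$. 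The paper's reduction is cleaner in that it avoids having to justify the local Milstein error order $h_j^{5/4}$ for piecewise Lipschitz $\widetilde\sigma'$, which is plausible but not a standard fact and would itself require a careful argument along the lines of~\cite{MGY19b}.
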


\begin{proof}
The statement can be shown analogously to Lemma 12 in~\cite{MGY21} if one uses instead of the boundedness of $\mu$ the fact that there exists a constant $C > 0$ with
\begin{flalign}
\EE |X_{t_i} - X_{\underline{t_i}} - \widetilde{X}_{t_i} + \widetilde{X}_{\underline{t_i}}|^p \leq C n^{-p}, \quad i \in \{0,\dots, n\}, \label{eqn:lemma:boundLPDistanceOfSolutionsAtInterpolation:boundErrorPart}
\end{flalign}
where we set
\begin{flalign*}
\underline{t_i} := \max\{ \tau \in \{2j \slash n: j = 0, \dots, n \slash 2\} : t_i \geq \tau\}, \quad i \in \{0, \dots, n\}.
\end{flalign*}
Let $i \in \{0, \dots, n\}$. We will only show \eqref{eqn:lemma:boundLPDistanceOfSolutionsAtInterpolation:boundErrorPart}. It holds
\begin{flalign*}
\EE |X_{t_i} - X_{\underline{t_i}} - \widetilde{X}_{t_i} + \widetilde{X}_{\underline{t_i}}|^p &= \EE |\int_{\underline{t_i}}^{t_i} (\mu(X_s) - \mu(\widetilde{X}_s)) \: ds|^p \leq (2 / n)^{p-1}  \EE \int_{\underline{t_i}}^{t_i} |\mu(X_s) - \mu(\widetilde{X}_s)|^p \: ds.
\end{flalign*}
Since $\mu$ satisfies the linear growth property according to Lemma \ref{lemma:functionalsWelldefinedMuOfLinearGrowth} and since we have $\mathbb{P}^{X_s} = \mathbb{P}^{\widetilde{X}_s}$ for $s \in [0,1]$, there exist constants $C_1, C_2, C_3 > 0$ which are independent of $i$ such that
\begin{flalign*}
\EE |X_{t_i} - X_{\underline{t_i}} - \widetilde{X}_{t_i} + \widetilde{X}_{\underline{t_i}}|^p &\leq C_ 1 \cdot (2/n)^{p-1}  \int_{\underline{t_i}}^{t_i} \EE [(1 + |X_s|)^p] \: ds
\\& \leq C_2 \cdot (2/n)^p \cdot \EE[1 + \sup_{s \in [0,1]}|X_s|^p]
\\& \leq C_3 \cdot (1/n)^p.
\end{flalign*}
Hence, we have shown \eqref{eqn:lemma:boundLPDistanceOfSolutionsAtInterpolation:boundErrorPart}.
\end{proof}


\subsection{Proof of Theorem \ref{theorem:lowerBound:SDEs:discDrift}} \label{subsection:proofOfMainResult}
In this section we start with preparatory work for the proof of Theorem \ref{theorem:lowerBound:SDEs:discDrift} and afterwards we prove the theorem. Our strategy is to derive a statement which holds in the classical setting with Lipschitz-continuous coefficients. Thereafter, we can bound for $i \in \{1, \dots, n\}$ the mixed terms
\begin{flalign*}
m_i = \EE\big[(G_{\mu}(X_{t_{i-1}}) - G_{\mu}(\widetilde{X}_{t_{i-1}})) \cdot ((G_{\mu}(X_{t_i}) - G_{\mu}(X_{t_{i-1}})) - (G_{\mu}(\widetilde{X}_{t_i}) - G_{\mu}(\widetilde{X}_{t_{i-1}})))\big]
\end{flalign*}
of a function $\mu$ satisfying $(\mu 1)$ in a suitable fashion. Afterwards, we will successively derive lower bounds for the diagonal terms
\begin{flalign*} 
d_i = \EE|(G_{\mu}(X_{t_i}) - G_{\mu}(X_{t_{i-1}})) - (G_{\mu}(\widetilde{X}_{t_i}) - G_{\mu}(\widetilde{X}_{t_{i-1}}))|^2.
\end{flalign*}

So, let us start with the derivation of the bounds for the mixed terms. We will later condition on $(X_{t_{i-1}}, \widetilde{X}_{t_{i-1}}) = (z, \tilde{z}) \in \mathbb{R}^2$ in order to obtain with Lemma \ref{lemma:representationOfXWithF} two strong solutions of the SDE with drift coefficient $\widetilde{\mu}$ and diffusion coefficient $\widetilde{\sigma}$ and with driving Brownian motion $V$ as well as deterministic initial values $G_\mu(z)$ and $G_\mu(\tilde{z})$, respectively.

\begin{lemma} \label{lemma:boundForMixedTerms}
Let $\mu \colon \mathbb{R} \rightarrow \mathbb{R}$ be a function satisfying $(\mu 1)$. Let $x_0 \in \mathbb{R}$ and $X,\widetilde{X} \colon [0,1] \times \Omega \rightarrow \mathbb{R}$ be strong solutions of the SDE \eqref{eqn:basicSDE} on the time interval $[0,1]$ with initial value $x_0$ and with driving Brownian motion $W$ and $\widetilde{W}$, respectively. Then there exists a constant $C > 0$ such that for all $i \in \{1, \dots, n\}$ it holds
\begin{flalign*}
|\EE &[(G_{\mu}(X_{t_{i-1}}) - G_{\mu}(\widetilde{X}_{t_{i-1}})) \cdot  (G_{\mu}(X_{t_i}) - G_{\mu}(X_{t_{i-1}}) - (G_{\mu}(\widetilde{X}_{t_i}) - G_{\mu}(\widetilde{X}_{t_{i-1}})))]| 
\\&\leq \frac{C}{n} \EE|G_{\mu}(X_{t_{i-1}}) - G_{\mu}(\widetilde{X}_{t_{i-1}})|^2.
\end{flalign*}
\end{lemma}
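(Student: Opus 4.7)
The plan is to condition on $(X_{t_{i-1}}, \widetilde{X}_{t_{i-1}})$ and exploit the fact that after the transformation $G_\mu$ the processes satisfy an SDE with Lipschitz-continuous coefficients, for which the stochastic integral part has mean zero and the drift part can be controlled by the initial gap. Throughout I write $\tau_i := t_i - t_{i-1}$ and note that since $\{2/n, 4/n, \dots, 1\} \subset \{t_1, \dots, t_n\}$ we have $\tau_i \leq 2/n$ for every $i$.

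First I would set $A := G_\mu(X_{t_{i-1}}) - G_\mu(\widetilde{X}_{t_{i-1}})$ and
$B := (G_\mu(X_{t_i}) - G_\mu(X_{t_{i-1}})) - (G_\mu(\widetilde{X}_{t_i}) - G_\mu(\widetilde{X}_{t_{i-1}}))$, and observe that $A$ is $\sigma(X_{t_{i-1}}, \widetilde{X}_{t_{i-1}})$-measurable, so by the tower property
\begin{flalign*}
\EE[A\cdot B] = \EE\bigl[A \cdot \EE[B \mid X_{t_{i-1}}, \widetilde{X}_{t_{i-1}}]\bigr].
\end{flalign*}
By Lemma \ref{lemma:representationOfXWithF}, conditionally on $(X_{t_{i-1}}, \widetilde{X}_{t_{i-1}}) = (z, \tilde z)$ the increment processes $(X_{t_{i-1}+s} - z)_{s \in [0,\tau_i]}$ and $(\widetilde{X}_{t_{i-1}+s} - \tilde z)_{s \in [0,\tau_i]}$ are obtained by applying $F$ to $(z, V)$ and $(\tilde z, \widetilde V)$, where $V, \widetilde V$ are Brownian motions independent of $(X_{t_{i-1}}, \widetilde{X}_{t_{i-1}})$.

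Next, by Lemma \ref{lemma:GTransformsStrongSolutions}, the processes $Y^y_s := G_\mu(F(G_\mu^{-1}(y), V)_s)$ and $Y^{\tilde y}_s := G_\mu(F(G_\mu^{-1}(\tilde y), \widetilde V)_s)$ are strong solutions of the transformed SDE $dY_s = \widetilde\mu(Y_s)\,ds + \widetilde\sigma(Y_s)\,dV_s$ (respectively with $\widetilde V$) on $[0,\tau_i]$ with Lipschitz coefficients and initial values $y = G_\mu(z)$ and $\tilde y = G_\mu(\tilde z)$. Since the stochastic integrals have zero expectation, for the conditional expectation evaluated at $(z,\tilde z)$ I would write
\begin{flalign*}
\EE[B \mid X_{t_{i-1}} = z, \widetilde{X}_{t_{i-1}} = \tilde z] = \EE\bigl[(Y^{y}_{\tau_i} - y) - (Y^{\tilde y}_{\tau_i} - \tilde y)\bigr] = \EE \int_0^{\tau_i} \bigl(\widetilde\mu(Y^{y}_s) - \widetilde\mu(Y^{\tilde y}_s)\bigr)\,ds.
\end{flalign*}
Using the Lipschitz constant $L_{\widetilde\mu}$ of $\widetilde\mu$ and Lemma \ref{lemma:L1DistnaceOfStrongSolutions:differenceOfInitialValue} (applied with the Brownian motion $V$ on $[0,\tau_i]$) yields $\EE|Y^{y}_s - Y^{\tilde y}_s| \leq C_1 |y-\tilde y|$, so
\begin{flalign*}
\bigl|\EE[B \mid X_{t_{i-1}} = z, \widetilde{X}_{t_{i-1}} = \tilde z]\bigr| \leq L_{\widetilde\mu} C_1 \tau_i |G_\mu(z) - G_\mu(\tilde z)| \leq \frac{2 L_{\widetilde\mu} C_1}{n}\,|G_\mu(z) - G_\mu(\tilde z)|.
\end{flalign*}

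Finally, plugging this pointwise bound on $|\EE[B \mid X_{t_{i-1}}, \widetilde X_{t_{i-1}}]|$ back into the tower identity gives
\begin{flalign*}
|\EE[A\cdot B]| \leq \EE\bigl[|A| \cdot |\EE[B \mid X_{t_{i-1}}, \widetilde X_{t_{i-1}}]|\bigr] \leq \frac{2 L_{\widetilde\mu} C_1}{n}\,\EE|A|^2,
\end{flalign*}
which is the claim with $C := 2 L_{\widetilde\mu} C_1$. The only subtle point—the step I would write out most carefully—is the conditioning argument: one must verify that after conditioning on $(X_{t_{i-1}}, \widetilde{X}_{t_{i-1}})$ the pair $(V, \widetilde V)$ remains a pair of Brownian motions (with respect to an appropriate filtration) so that the stochastic-integral representation applies and its expectation vanishes; this is exactly what Lemma \ref{lemma:representationOfXWithF} provides. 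Everything else is a direct application of Lipschitz continuity of $\widetilde\mu$ and the stability bound in Lemma \ref{lemma:L1DistnaceOfStrongSolutions:differenceOfInitialValue}.
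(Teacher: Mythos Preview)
Your overall strategy matches the paper's, but there is a genuine gap at the step where you invoke Lemma \ref{lemma:L1DistnaceOfStrongSolutions:differenceOfInitialValue}. You define $Y^{y}$ as the transformed solution driven by $V$ and $Y^{\tilde y}$ as the transformed solution driven by $\widetilde V$, and then claim $\EE|Y^{y}_s - Y^{\tilde y}_s| \leq C_1|y-\tilde y|$ ``applied with the Brownian motion $V$''. But Lemma \ref{lemma:L1DistnaceOfStrongSolutions:differenceOfInitialValue} compares two solutions of the \emph{same} SDE driven by the \emph{same} Brownian motion with different initial values. Your $Y^{y}$ and $Y^{\tilde y}$ are driven by different Brownian motions, and in that case the inequality is simply false: take $y=\tilde y$, then the right-hand side is $0$ while $Y^{y}_s \neq Y^{\tilde y}_s$ in general because $V \neq \widetilde V$.

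The paper fixes exactly this point by inserting one line you are missing: before bounding, it uses that $V$ and $\widetilde V$ have the same law (and are independent of $(X_{t_{i-1}},\widetilde X_{t_{i-1}})$) to replace $\widetilde V$ by $V$ inside the expectation, i.e.\ $\EE[G_\mu(F(\tilde z,\widetilde V)(\tau_i))-G_\mu(\tilde z)] = \EE[G_\mu(F(\tilde z,V)(\tau_i))-G_\mu(\tilde z)]$. After this swap both transformed processes $Y^{G_\mu(z)}=G_\mu(F(z,V))$ and $Y^{G_\mu(\tilde z)}=G_\mu(F(\tilde z,V))$ are driven by the common Brownian motion $V$, and now Lemma \ref{lemma:L1DistnaceOfStrongSolutions:differenceOfInitialValue} legitimately gives $\EE|Y^{G_\mu(z)}_s - Y^{G_\mu(\tilde z)}_s| \leq C|G_\mu(z)-G_\mu(\tilde z)|$. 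With this correction your argument goes through and coincides with the paper's.
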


\begin{proof}
Let $i \in \{1, \dots, n\}$. In this proof we use $\EE_{z, \tilde{z}}[Y] := \EE[Y | (X_{t_{i-1}}, \widetilde{X}_{t_{i-1}}) = (z, \tilde{z})]$ for an $L^1$-random variable $Y$ and for $z, \tilde{z} \in \mathbb{R}$. Let $F,V$ and $\widetilde{V}$ be given as in Lemma \ref{lemma:representationOfXWithF}. Then the same lemma yields that $(X_{t_{i-1}}, \widetilde{X}_{t_{i-1}})$ and $(V, \widetilde{V})$ are independent and that it holds $\mathbb{P}$-almost surely
\begin{flalign}
\begin{aligned}
(X_{t_{i-1} + t})_{t \in [0, t_i - t_{i-1}]} &= F(X_{t_{i-1}}, V),
\\
(\widetilde{X}_{t_{i-1} + t})_{t \in [0, t_i - t_{i-1}]} &= F(\widetilde{X}_{t_{i-1}}, \widetilde{V}). 
\end{aligned} \label{eqn:representationOfXWithF}
\end{flalign}
Consequently, it holds for $\mathbb{P}^{(X_{t_{i-1}}, \widetilde{X}_{t_{i-1}})}$-almost all $(z, \tilde{z}) \in \mathbb{R}^2$ 
\begin{flalign*}
&\EE_{z, \tilde{z}} [(G_{\mu}(X_{t_{i-1}}) - G_{\mu}(\widetilde{X}_{t_{i-1}})) \cdot (G_{\mu}(X_{t_i}) - G_{\mu}(X_{t_{i-1}}) - (G_{\mu}(\widetilde{X}_{t_i}) - G_{\mu}(\widetilde{X}_{t_{i-1}})))]
\\&\quad= (G_{\mu}(z) - G_{\mu}(\tilde{z})) \cdot \EE[(G_{\mu}(F(z, V)(t_i - t_{i-1})) - G_{\mu}(z)) - (G_{\mu}(F(\tilde{z}, \widetilde{V})(t_i - t_{i-1})) - G_{\mu}(\tilde{z}))]
\\&\quad= (G_{\mu}(z) - G_{\mu}(\tilde{z})) \cdot \EE[(G_{\mu}(F(z, V)(t_i - t_{i-1})) - G_{\mu}(z)) - (G_{\mu}(F(\tilde{z}, V)(t_i - t_{i-1})) - G_{\mu}(\tilde{z}))]
\end{flalign*}
According to Lemma \ref{lemma:existenceOfFunctionF}, now $F(z, V)$ and $F(\tilde{z}, V)$ are strong solutions of the SDE \eqref{eqn:basicSDE} on the time interval $[0,t_i - t_{i-1}]$ with driving Brownian motion $V$ and initial value $z$ and $\tilde{z}$, respectively. Therefore, by Lemma \ref{lemma:GTransformsStrongSolutions} the processes $Y^{G_{\mu}(z)} := G_{\mu}(F(z, V))$ and $Y^{G_{\mu}(\tilde{z})}:=G_{\mu}(F(\tilde{z}, V))$ are strong solutions of the SDE \eqref{eqn:basicTildeSDE} on the time interval $[0,t_i - t_{i-1}]$ with driving Brownian motion $V$ and initial value $G_{\mu}(z)$ and $G_{\mu}(\tilde{z})$, respectively. Therewith, it hods 
\begin{flalign*}
\EE&[(G_{\mu}(F(z, V)(t_i - t_{i-1})) - G_{\mu}(z)) - (G_{\mu}(F(\tilde{z}, V)(t_i - t_{i-1})) - G_{\mu}(\tilde{z}))] 
\\&= \int_0^{t_i - t_{i-1}} \EE [\widetilde{\mu}(Y^{G_{\mu}(z)}_s) - \widetilde{\mu}(Y^{G_{\mu}(\tilde{z})}_s)] \: ds + \EE \int_0^{t_i - t_{i-1}}(\widetilde{\sigma}(Y^{G_{\mu}(z)}_s) - \widetilde{\sigma}(Y^{G_{\mu}(\tilde{z})}_s)) \: dV_s.
\end{flalign*}
Let $L_{\widetilde{\mu}} > 0$ denote the Lipschitz-constant of $\widetilde{\mu}$. Since $(\int_0^t \widetilde{\sigma}(Y_u^{G(z)}) \: dV_u)_{t \in [0, t_i - t_{i-1}]}$ and 
\\$(\int_0^t \widetilde{\sigma}(Y_u^{G(\tilde{z})}) \: dV_u)_{t \in [0, t_i - t_{i-1}]}$ are martingales, it holds for $\mathbb{P}^{(X_{t_{i-1}}, \widetilde{X}_{t_{i-1}})}$-almost all $(z, \tilde{z}) \in \mathbb{R}^2$ 
\begin{flalign*}
|\EE_{z, \tilde{z}} &[(G_{\mu}(X_{t_{i-1}}) - G_{\mu}(\widetilde{X}_{t_{i-1}})) \cdot (G_{\mu}(X_{t_i}) - G_{\mu}(X_{t_{i-1}}) - (G_{\mu}(\widetilde{X}_{t_i}) - G_{\mu}(\widetilde{X}_{t_{i-1}})))]|
\\&\leq |G_{\mu}(z) - G_{\mu}(\tilde{z})| \cdot L_{\widetilde{\mu}} \int_0^{t_i - t_{i-1}} \EE |Y^{G_{\mu}(z)}_s - Y^{G_{\mu}(\tilde{z})}_s| \: ds.
\end{flalign*}
Hence, an application of Lemma \ref{lemma:L1DistnaceOfStrongSolutions:differenceOfInitialValue} finishes the proof.
\end{proof}

Now we estimate the diagonal terms $d_i$. Thereby, we will step by step derive lower bounds that depend on the expression $\frac{1}{n} \Delta_i$ and some terms which are in $o(n^{-5 \slash 2})$. For this, we will again condition on $(X_{t_{i-1}}, \widetilde{X}_{t_{i-1}}) = (z, \tilde{z})$ and then we will transform the occurring processes $Y^{G_\mu(z)} - G_\mu(z)$ and $\widetilde{Y}^{G_\mu(\tilde{z})} - G_\mu(\tilde{z})$ which satisfy
\begin{flalign*}
	\mathbb{P}&^{(Y^{G_\mu(z)} - G_\mu(z), \widetilde{Y}^{G_\mu(\tilde{z})} - G_\mu(\tilde{z}))} 
	\\&= \mathbb{P}^{((G_\mu(X_{t_{i-1} + s}) - G_\mu(X_{t_{i-1}}), G_\mu(\widetilde{X}_{t_{i-1} + s}) -G_\mu(\widetilde{X}_{t_{i-1}})))_{s \in [0, t_i - t_{i-1}]} | (X_{t_{i-1}}, \widetilde{X}_{t_{i-1}}) = (z, \tilde{z})}.
\end{flalign*}
This will yield a new expression which can be bounded in a suitable way.

\begin{lemma}\label{lemma:lowerBound:increments}
Let $\mu \colon \mathbb{R} \rightarrow \mathbb{R}$ be a function satisfying $(\mu 1)$. Let $x_0 \in \mathbb{R}$ and $X,\widetilde{X} \colon [0,1] \times \Omega \rightarrow \mathbb{R}$ be strong solutions of the SDE \eqref{eqn:basicSDE} on the time interval $[0,1]$ with initial value $x_0$ and driving Brownian motion $W$ and $\widetilde{W}$, respectively. Then there exist constants $c, C  > 0$ such that for all $i \in \{1, \dots, n\}$ it holds
\begin{flalign}
\EE& |G_{\mu}(X_{t_i}) - G_{\mu}(X_{t_{i-1}}) - (G_{\mu}(\widetilde{X}_{t_i}) - G_{\mu}(\widetilde{X}_{t_{i-1}}))|^2 \notag
\\&\geq c \EE |\int_{t_{i-1}}^{t_i} \mu(X_s) - \mu(G_{\mu}^{-1}(G_{\mu}(\widetilde{X}_s) - G_{\mu}(\widetilde{X}_{t_{i-1}}) + G_{\mu}(X_{t_{i-1}}))) \: ds|^2  \label{eqn:lowerBound:increments:statement}
\\& \quad- \frac{C}{n}\EE|G_{\mu}(X_{t_{i-1}}) - G_{\mu}(\widetilde{X}_{t_{i-1}})|^2. \notag
\end{flalign}
\end{lemma}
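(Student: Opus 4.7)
My plan is to condition on $(X_{t_{i-1}},\widetilde X_{t_{i-1}}) = (z,\tilde z)$. By Lemma \ref{lemma:representationOfXWithF}, on this event $(X_{t_{i-1}+s})_{s\in[0,h]}$ and $(\widetilde X_{t_{i-1}+s})_{s\in[0,h]}$, with $h := t_i-t_{i-1}$, are strong solutions of \eqref{eqn:basicSDE} started at $z$ resp.\ $\tilde z$ and driven by the Brownian motions $V$ resp.\ $\widetilde V$. Two features of this construction are crucial: $V_h = \widetilde V_h$, since $W$ and $\widetilde W$ coincide at the grid points $t_{i-1}$ and $t_i$; and the constraint \eqref{eqn:thmLowerBoundOccupationTime:2OverNInti} forces $h \le 2/n$. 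Setting $\delta := G_\mu(z) - G_\mu(\tilde z)$ and
\begin{flalign*}
X^z_s := X_{t_{i-1}+s}, \qquad U^z_s := G_\mu^{-1}\bigl(G_\mu(\widetilde X_{t_{i-1}+s}) + \delta\bigr), \qquad s \in [0,h],
\end{flalign*}
one has $X^z_0 = U^z_0 = z$ and, conditionally on $(z,\tilde z)$,
\begin{flalign*}
\bigl|(G_\mu(X_{t_i}) - G_\mu(X_{t_{i-1}})) - (G_\mu(\widetilde X_{t_i}) - G_\mu(\widetilde X_{t_{i-1}}))\bigr| = |G_\mu(X^z_h) - G_\mu(U^z_h)| \ge L^{-1} |X^z_h - U^z_h|,
\end{flalign*}
where $L$ denotes the Lipschitz constant of $G_\mu^{-1}$ from Lemma \ref{lemma:basicPropertiesOfG}(ii). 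Hence the task reduces to lower-bounding $\EE|X^z_h - U^z_h|^2$ conditionally.

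To handle $U^z$, observe that $\phi_s := G_\mu(\widetilde X_{t_{i-1}+s}) + \delta$ is, by Lemma \ref{lemma:GTransformsStrongSolutions}(ii), a semimartingale with $d\phi_s = \widetilde\mu(\phi_s - \delta)\,ds + \widetilde\sigma(\phi_s - \delta)\,d\widetilde V_s$. Applying the generalized It\^o formula to $U^z_s = G_\mu^{-1}(\phi_s)$—legitimate because $(G_\mu^{-1})'$ is Lipschitz by Lemma \ref{lemma:basicPropertiesOfG}(v), so its weak derivative $(G_\mu^{-1})''$ (extended as in Section \ref{subsubsection:transformation}) is bounded—and then invoking the two identities of Lemma \ref{lemma:fromTildeToMu} at the base point $\phi_s = G_\mu(U^z_s)$ to collapse the principal part of the expansion into $\mu(U^z_s)\,ds + d\widetilde V_s$, with Lipschitz-controlled remainders coming from the $\delta$-shift in the coefficients of $\phi$, yields
\begin{flalign*}
dU^z_s = \mu(U^z_s)\,ds + d\widetilde V_s + E_1(s)\,ds + E_2(s)\,d\widetilde V_s,
\end{flalign*}
with deterministic bounds $|E_1(s)|,\,|E_2(s)| \le C|\delta|$ resulting from the Lipschitz continuity of $\widetilde\mu,\widetilde\sigma,\widetilde\sigma^2$ (note $\widetilde\sigma = G_\mu' \circ G_\mu^{-1}$ is bounded since $G_\mu'$ is) and the boundedness of $(G_\mu^{-1})'$ and $(G_\mu^{-1})''$.

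Subtracting the identity $X^z_h = z + \int_0^h \mu(X^z_s)\,ds + V_h$ from the corresponding expression for $U^z_h$ and using $V_h = \widetilde V_h$ gives
\begin{flalign*}
X^z_h - U^z_h = \int_0^h [\mu(X^z_s) - \mu(U^z_s)]\,ds - \int_0^h E_1(s)\,ds - \int_0^h E_2(s)\,d\widetilde V_s.
\end{flalign*}
The elementary inequality $(a - b - c)^2 \ge \tfrac{1}{2}a^2 - 2b^2 - 2c^2$, the It\^o isometry, and the bounds on $E_1,E_2$ then produce, conditionally,
\begin{flalign*}
\EE|X^z_h - U^z_h|^2 \ge \tfrac{1}{2}\,\EE\Bigl|\int_0^h [\mu(X^z_s) - \mu(U^z_s)]\,ds\Bigr|^2 - C' h |\delta|^2.
\end{flalign*}
Integrating over $(z,\tilde z)\sim(X_{t_{i-1}},\widetilde X_{t_{i-1}})$, re-expressing the inner integral and $|\delta|^2$ in their original form, and using $h\le 2/n$ yields \eqref{eqn:lowerBound:increments:statement}. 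The main obstacle is the generalized It\^o step: one must verify that Lemma \ref{lemma:fromTildeToMu} transfers across the $\delta$-shift into remainders that are genuinely $O(|\delta|)$ uniformly, and that the piecewise definition of $(G_\mu^{-1})''$ at the finite exceptional set $\{G_\mu(\xi_j)\}$ causes no issue because $\phi$ has non-degenerate quadratic variation and therefore spends Lebesgue-null time at that set.
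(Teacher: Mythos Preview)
Your proposal is correct and follows essentially the same route as the paper. Both arguments condition on $(X_{t_{i-1}},\widetilde X_{t_{i-1}})=(z,\tilde z)$, pass from the $G_\mu$-level to the original level via the Lipschitz property of $G_\mu^{-1}$, apply the generalized It\^o formula to $G_\mu^{-1}$ acting on the $\delta$-shifted transformed tilde process (your $\phi_s$ is the paper's $\widetilde U^{z,\tilde z}_s$, and your $U^z_s$ is the paper's $H_{\mu,z}^{-1}(\widetilde Y^{G_\mu(\tilde z)}_s-G_\mu(\tilde z))$), invoke Lemma~\ref{lemma:fromTildeToMu} to collapse the principal part and isolate three $O(|\delta|)$ remainders, and finally exploit $V_h=\widetilde V_h$ together with $h\le 2/n$; your error terms $E_1,E_2$ are precisely the integrands of the paper's $A_{z,\tilde z}+B_{z,\tilde z}$ and $C_{z,\tilde z}$.
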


\begin{proof}
Let $i \in \{1, \dots, n\}$ and $F,V$ as well as $\widetilde{V}$ be given as in Lemma \ref{lemma:representationOfXWithF}. Arguing analogously to the proof of Lemma \ref{lemma:boundForMixedTerms} shows that for $(z, \tilde{z}) \in \mathbb{R}^2$ the process $Y^{G_{\mu}(z)} := G_{\mu}(F(z, V))$ is a strong solution of the SDE \eqref{eqn:basicTildeSDE} on the time interval $[0,t_i - t_{i-1}]$ with driving Brownian motion $V$ and initial value $G_{\mu}(z)$ and that the process $\widetilde{Y}^{G_{\mu}(\tilde{z})}:=G_{\mu}(F(\tilde{z}, \widetilde{V}))$ is a strong solution of the SDE \eqref{eqn:basicTildeSDE} on the time interval $[0,t_i - t_{i-1}]$ with driving Brownian motion $\widetilde{V}$ and initial value $G_{\mu}(\tilde{z})$. Since $(X_{t_{i-1}}, \widetilde{X}_{t_{i-1}})$ and $(V, \widetilde{V})$ are independent according to Lemma \ref{lemma:representationOfXWithF} and because of \eqref{eqn:representationOfXWithF}, it holds for $\mathbb{P}^{(X_{t_{i-1}}, \widetilde{X}_{t_{i-1}})}$-almost all $(z, \tilde{z}) \in \mathbb{R}^2$ 
\begin{flalign}
\begin{aligned}
\EE &[|G_{\mu}(X_{t_i}) - G_{\mu}(X_{t_{i-1}}) - (G_{\mu}(\widetilde{X}_{t_i}) - G_{\mu}(\widetilde{X}_{t_{i-1}}))|^2 | (X_{t_{i-1}}, \widetilde{X}_{t_{i-1}}) = (z, \tilde{z})] 
\\&= \EE|(Y^{G_{\mu}(z)}_{t_i - t_{i-1}} - G_{\mu}(z)) - (\widetilde{Y}^{G_{\mu}(\tilde{z})}_{t_i - t_{i-1}} - G_{\mu}(\tilde{z}))|^2. 
\end{aligned}\label{eqn:lowerBound:increments:transformedIncrements}
\end{flalign}
The main step is now to apply the It\^{o}-formula with a suitable function. The function which we will use for the It\^{o}-formula is the inverse of
\begin{flalign*}
H_{\mu, z}\colon \mathbb{R} \rightarrow \mathbb{R}, \quad y \mapsto G_{\mu}(y) - G_{\mu}(z).
\end{flalign*}
Since $G_{\mu}$ is a bijection, we immediately see that the inverse of $H_{\mu, z}$ exists and is given by 
\begin{flalign*}
H_{\mu, z}^{-1}(y) = G_{\mu}^{-1}(y + G_{\mu}(z)), \quad y \in \mathbb{R}.
\end{flalign*}
Our aim is now to apply the It\^{o}-formula with $H_{\mu, z}^{-1}$ on $(Y_s^{G_{\mu}(z)} - G_{\mu}(z))_{s \in [0, t_i - t_{i-1}]}$ and $(\widetilde{Y}_s^{G_{\mu}(\tilde{z})} - G_{\mu}(\tilde{z}))_{s \in [0, t_i - t_{i-1}]}$ in \eqref{eqn:lowerBound:increments:transformedIncrements}. Therefore, let $L_{G_{\mu}^{-1}}$ be the Lipschitz-constant of $G_{\mu}^{-1}$. Then it holds
\begin{flalign}
\begin{aligned}
 \EE& |(Y^{G_{\mu}(z)}_{t_i - t_{i-1}} - G_{\mu}(z)) - (\widetilde{Y}^{G_{\mu}(\tilde{z})}_{t_i - t_{i-1}} - G_{\mu}(\tilde{z}))|^2 
\\&=  \EE|H_{\mu, z}(H_{\mu, z}^{-1}(Y^{G_{\mu}(z)}_{t_i - t_{i-1}} - G_{\mu}(z))) - H_{\mu, z}(H_{\mu, z}^{-1}(\widetilde{Y}^{G_{\mu}(\tilde{z})}_{t_i - t_{i-1}} - G_{\mu}(\tilde{z})))|^2 
\\&= \EE|G_{\mu}(H_{\mu, z}^{-1}(Y^{G_{\mu}(z)}_{t_i - t_{i-1}} - G_{\mu}(z))) - G_{\mu}(H_{\mu, z}^{-1}(\widetilde{Y}^{G_{\mu}(\tilde{z})}_{t_i - t_{i-1}} - G_{\mu}(\tilde{z})))|^2 
\\& \geq (L_{G_{\mu}^{-1}})^{-2} \EE|(H_{\mu, z}^{-1}(Y^{G_{\mu}(z)}_{t_i - t_{i-1}} - G_{\mu}(z))) - (H_{\mu, z}^{-1}(\widetilde{Y}^{G_{\mu}(\tilde{z})}_{t_i - t_{i-1}} - G_{\mu}(\tilde{z})))|^2. 
\end{aligned}\label{eqn:lowerBound:increment:transformed}
\end{flalign}
Now the It\^{o}-formula of Problem 3.7.3 in~\cite{ks91} can be applied since $H_{\mu,z}^{-1}$ is differentiable due to Lemma \ref{lemma:basicPropertiesOfG} and has a Lipschitz-continuous derivative $(H_{\mu,z}^{-1})^\prime$ which is given by
\begin{flalign}
(H_{\mu,z}^{-1})^\prime(y) = (G_{\mu}^{-1})^\prime (y + G_{\mu}(z)), \quad y \in \mathbb{R}.  \label{eqn:firstDerivative:H}
\end{flalign}
In particular, $(H_{\mu,z}^{-1})^\prime$ is absolutely continuous and with Lemma \ref{lemma:basicPropertiesOfG} it holds for $\lambda$-almost all $y \in \mathbb{R}$
\begin{flalign}
(H_{\mu,z}^{-1})^{\prime \prime} (y) = (G_{\mu}^{-1})^{\prime \prime} (y + G_{\mu}(z)). \label{eqn:secondDerivative:H}
\end{flalign}
Applying the It\^{o}-formula in consideration of \eqref{eqn:firstDerivative:H} as well as \eqref{eqn:secondDerivative:H} yields that it holds $\mathbb{P}$-almost surely
\begin{flalign*}
H_{\mu, z}^{-1}&(Y^{G_{\mu}(z)}_{t_i - t_{i-1}} - G_{\mu}(z))
\\ &= H_{\mu, z}^{-1}(0) + \int_0^{t_i - t_{i-1}} (H_{\mu, z}^{-1})^\prime(Y^{G_{\mu}(z)}_s - G_{\mu}(z)) \cdot \widetilde{\mu}(Y^{G_{\mu}(z)}_s)  \: ds 
\\&\quad+ \frac{1}{2}\int_0^{t_i - t_{i-1}}(H_{\mu,z}^{-1})^{\prime \prime}(Y^{G_{\mu}(z)}_s - G_{\mu}(z)) \cdot
\widetilde{\sigma}^2(Y^{G_{\mu}(z)}_s) \: ds 
\\& \quad+ \int_0^{t_i - t_{i-1}} (H_{\mu, z}^{-1})^\prime(Y^{G_{\mu}(z)}_s - G_{\mu}(z)) \cdot \widetilde{\sigma}(Y^{G_{\mu}(z)}_s) \: dV_s
\\&= H_{\mu, z}^{-1}(0) + \int_0^{t_i - t_{i-1}} (G_{\mu}^{-1})^\prime(Y^{G_{\mu}(z)}_s) \cdot \widetilde{\mu}(Y^{G_{\mu}(z)}_s) + \frac{1}{2}(G_{\mu}^{-1})^{\prime \prime}(Y^{G_{\mu}(z)}_s) \cdot
\widetilde{\sigma}^2(Y^{G_{\mu}(z)}_s) \: ds 
\\& \quad+ \int_0^{t_i - t_{i-1}} (G_{\mu}^{-1})^\prime(Y^{G_{\mu}(z)}_s) \cdot \widetilde{\sigma}(Y^{G_{\mu}(z)}_s) \: dV_s. 
\end{flalign*}
Lemma \ref{lemma:fromTildeToMu} now shows that it holds $\mathbb{P}$-almost surely
\begin{flalign}
H_{\mu, z}^{-1}&(Y^{G_{\mu}(z)}_{t_i - t_{i-1}} - G_{\mu}(z)) = H_{\mu, z}^{-1}(0) + \int_0^{t_i - t_{i-1}} \mu(G_{\mu}^{-1}(Y^{G_{\mu}(z)}_s)) \: ds + W_{t_i} - W_{t_{i-1}}. \label{eqn:nonTildeProcess:transformed}
\end{flalign}
Analogously, we obtain with the It\^{o}-formula applied to $(\widetilde{Y}_s^{G_{\mu}(\tilde{z})} - G_{\mu}(\tilde{z}))_{s \in [0, t_i - t_{i-1}]}$ and $H_{\mu, z}^{-1}$ that it holds $\mathbb{P}$-almost surely
\begin{flalign*}
H_{\mu, z}^{-1}(\widetilde{Y}^{G_{\mu}(\tilde{z})}_{t_i - t_{i-1}} - G_{\mu}(\tilde{z}))
&=  H_{\mu, z}^{-1}(0) + \int_0^{t_i - t_{i-1}} (G_{\mu}^{-1})^\prime(\widetilde{Y}_s^{G_{\mu}(\tilde{z})} - G_{\mu}(\tilde{z}) + G_{\mu}(z)) \cdot \widetilde{\mu}(\widetilde{Y}^{G_{\mu}(\tilde{z})}_s) \: ds
\\& \quad + \frac{1}{2}\int_0^{t_i - t_{i-1}}(G_{\mu}^{-1})^{\prime \prime}(\widetilde{Y}^{G_{\mu}(\tilde{z})}_s - G_{\mu}(\tilde{z}) + G_{\mu}(z)) \cdot \widetilde{\sigma}^2(\widetilde{Y}^{G_{\mu}(\tilde{z})}_s) \: ds
\\& \quad + \int_0^{t_i - t_{i-1}} (G_{\mu}^{-1})^\prime(\widetilde{Y}^{G_{\mu}(\tilde{z})}_s - G_{\mu}(\tilde{z}) + G_{\mu}(z)) \cdot \widetilde{\sigma}(\widetilde{Y}^{G_{\mu}(\tilde{z})}_s) \: d\widetilde{V}_s.
\end{flalign*}
For simplicity, let us set for the shifted process $\widetilde{U}^{z,\tilde{z}} := \widetilde{Y}^{G_{\mu}(\tilde{z})} - G_{\mu}(\tilde{z}) + G_{\mu}(z)$. Another application of Lemma \ref{lemma:fromTildeToMu} yields due to $W_{t_i} = \widetilde{W}_{t_i}$ for $i \in \{1, \dots, n\}$ that it holds $\mathbb{P}$-almost surely
\begin{equation}\label{eqn:tildeProcess:transformed}
\begin{aligned}
H_{\mu, z}^{-1}(\widetilde{Y}^{G_{\mu}(\tilde{z})}_{t_i - t_{i-1}} - G_{\mu}(\tilde{z})) 
&= H_{\mu, z}^{-1}(0) + \int_0^{t_i - t_{i-1}} \mu(G_{\mu}^{-1}(\widetilde{Y}^{G_{\mu}(\tilde{z})}_s - G_{\mu}(\tilde{z}) + G_{\mu}(z))) \: ds + W_{t_i} - W_{t_{i-1}} 
\\& \quad+ A_{z, \tilde{z}} + B_{z, \tilde{z}} + C_{z, \tilde{z}} 
\end{aligned}
\end{equation}
with 
\begin{flalign*}
A_{z, \tilde{z}} &:= \int_0^{t_i - t_{i-1}} (G_{\mu}^{-1})^\prime(\widetilde{U}^{z,\tilde{z}}_s) \cdot (\widetilde{\mu}(\widetilde{Y}^{G_{\mu}(\tilde{z})}_s) - \widetilde{\mu}(\widetilde{U}^{z,\tilde{z}}_s)) \: ds,
\\
B_{z,\tilde{z}} &:= \int_0^{t_i - t_{i-1}}\frac{1}{2}(G_{\mu}^{-1})^{\prime \prime}(\widetilde{U}^{z,\tilde{z}}_s) \cdot(\widetilde{\sigma}^2(\widetilde{Y}^{G_{\mu}(\tilde{z})}_s) - \widetilde{\sigma}^2(\widetilde{U}^{z,\tilde{z}}_s))\: ds,
\\
C_{z, \tilde{z}} &:= \int_0^{t_i - t_{i-1}} (G_{\mu}^{-1})^\prime(\widetilde{U}^{z,\tilde{z}}_s) \cdot(\widetilde{\sigma}(\widetilde{Y}^{G_{\mu}(\tilde{z})}_s) - \widetilde{\sigma}(\widetilde{U}^{z,\tilde{z}}_s)) \: d\widetilde{V}_s.
\end{flalign*}

Using $\frac{1}{2}(a+b)^2 - b^2 \leq a^2$ for $a,b \in \mathbb{R}$ and plugging \eqref{eqn:nonTildeProcess:transformed} and \eqref{eqn:tildeProcess:transformed} into \eqref{eqn:lowerBound:increment:transformed} shows that

\begin{flalign}
\begin{aligned}
 \EE& |(Y^{G_{\mu}(z)}_{t_i - t_{i-1}} - G_{\mu}(z)) - (\widetilde{Y}^{G_{\mu}(\tilde{z})}_{t_i - t_{i-1}} - G_{\mu}(\tilde{z}))|^2 
\\& \geq (L_{G_{\mu}^{-1}})^{-2}[\frac{1}{8} \EE|\int_0^{t_i - t_{i-1}} \mu(G_{\mu}^{-1}(Y^{G_{\mu}(z)}_s)) - \mu(G_{\mu}^{-1}(\widetilde{Y}^{G_{\mu}(\tilde{z})}_s - G_{\mu}(\tilde{z}) + G_{\mu}(z)))\: ds|^2 
\\&  \quad - \EE |A_{z,\tilde{z}}|^2 - \EE|B_{z,\tilde{z}}|^2 - \EE|C_{z,\tilde{z}}|^2 ]. 
\end{aligned}\label{eqn:increment:lowerBound:conditioned2}
\end{flalign}

We will later derive a constant $C_1 > 0$ which is independent of $z, \tilde{z}$ and which satisfies
\begin{flalign}
\max\{ \EE |A_{z,\tilde{z}}|^2, \EE|B_{z,\tilde{z}}|^2, \EE|C_{z,\tilde{z}}|^2\} \leq \frac{C_1}{n}\cdot |G_{\mu}(z) - G_{\mu}(\tilde{z})|^2. \label{eqn:boundForABC}
\end{flalign}

Now it holds due to the definition of $Y^{G_{\mu}(X_{t_{i-1}})}$ and due to \eqref{eqn:representationOfXWithF} $\mathbb{P}$-almost surely
\begin{flalign*}
Y^{G_{\mu}(X_{t_{i-1}})} &=  G_{\mu}(F(X_{t_{i-1}}, V)) = (G_{\mu}(X_{t_{i-1} + t}))_{t \in [0, t_i - t_{i-1}]},
\\\widetilde{Y}^{G_{\mu}(\widetilde{X}_{t_{i-1}})} &=  G_{\mu}(F(\widetilde{X}_{t_{i-1}}, \widetilde{V})) =  (G_{\mu}(\widetilde{X}_{t_{i-1} + t}))_{t \in [0, t_i - t_{i-1}]}.
\end{flalign*}

Recalling that $(X_{t_{i-1}}, \widetilde{X}_{t_{i-1}})$ and $(V, \widetilde{V})$ are independent, the claim follows with \eqref{eqn:lowerBound:increments:transformedIncrements}, \eqref{eqn:increment:lowerBound:conditioned2} and \eqref{eqn:boundForABC}.

It remains to show \eqref{eqn:boundForABC}. First of all, we note that with Lemma \ref{lemma:basicPropertiesOfG} and with the definition of $\widetilde{\sigma}$ it holds
\begin{flalign*}
\norm{\widetilde{\sigma}}_{\infty}, \norm{(G_{\mu}^{-1})^\prime}_\infty, \norm{(G_{\mu}^{-1})^{\prime \prime}}_{\infty} < \infty.
\end{flalign*}
Moreover, we will often use $t_i - t_{i-1} \leq \frac{2}{n}$ in the following. Let $L_{\widetilde{\mu}} > 0$ be the Lipschitz-constant of $\widetilde{\mu}$ and $L_{\widetilde{\sigma}}> 0$ be the Lipschitz-constant of $\widetilde{\sigma}$. We start wih the estimation of $\EE|A_{z, \tilde{z}}|^2$. It holds
\begin{flalign*}
\EE|A_{z, \tilde{z}}|^2 &\leq \frac{2}{n} \cdot \norm{(G_{\mu}^{-1})^\prime}_\infty^2 \EE\int_0^{t_i - t_{i-1}} |\widetilde{\mu}(\widetilde{Y}^{G_{\mu}(\tilde{z})}_s) - \widetilde{\mu}(\widetilde{Y}_s^{G_{\mu}(\tilde{z})} - G_{\mu}(\tilde{z}) + G_{\mu}(z))|^2 \: ds
\\& \leq \frac{4}{n^2} \cdot \norm{(G_{\mu}^{-1})^\prime}_\infty^2 \cdot (L_{\widetilde{\mu}})^2 | G_{\mu}(z) - G_{\mu}(\tilde{z})|^2.
\end{flalign*}
In a similar fashion, we derive for $\EE|B_{z, \tilde{z}}|^2$ 
\begin{flalign*}
\EE|B_{z, \tilde{z}}|^2 & \leq  \frac{1}{2n} \norm{(G_{\mu}^{-1})^{\prime \prime}}_\infty^2\cdot   \EE\int_0^{t_i - t_{i-1}} |\widetilde{\sigma}(\widetilde{Y}^{G_{\mu}(\tilde{z})}_s) + \widetilde{\sigma}(\widetilde{U}^{z,\tilde{z}}_s)|^2 \cdot |\widetilde{\sigma}(\widetilde{Y}^{G_{\mu}(\tilde{z})}_s) - \widetilde{\sigma}(\widetilde{U}^{z,\tilde{z}}_s)|^2 \: ds
\\& \leq \frac{4}{n^2} \cdot  \norm{(G_{\mu}^{-1})^{\prime \prime}}_\infty^2 \cdot \norm{\widetilde{\sigma}}_{\infty}^2 \cdot (L_{\widetilde{\sigma}})^2 |G(z) - G(\tilde{z})|^2.
\end{flalign*}
Also the expression $\EE|C_{z, \tilde{z}}|^2$ can be estimated in a similar manner after an application of the It\^{o}-isometry by
\begin{flalign*}
\EE |C_{z,\tilde{z}}|^2 &\leq \norm{(G_{\mu}^{-1})^\prime}_\infty^2 \cdot \EE \int_0^{t_i - t_{i-1}} |\widetilde{\sigma}(\widetilde{Y}^{G_{\mu}(\tilde{z})}_s) - \widetilde{\sigma}(\widetilde{Y}_s^{G_{\mu}(\tilde{z})} - G_{\mu}(\tilde{z}) + G_{\mu}(z))|^2 \: ds 
\\& \leq \frac{2}{n} \cdot \norm{(G_{\mu}^{-1})^\prime}_\infty^2 \cdot (L_{\widetilde{\sigma}})^2 |G_{\mu}(z) - G_{\mu}(\tilde{z})|^2.
\end{flalign*}
\end{proof}

We continue with the derivation of a lower bound for \eqref{eqn:lowerBound:increments:statement}. Since we will therefore apply Lemma \ref{lemma:boundProbability:SolutionIsInInterval} to estimate the probability that $X_{t_{i-1}}$ stays in some small interval, we will bound expression \eqref{eqn:lowerBound:increments:statement} in the following lemma only for such $i \in \{1, \dots, n\}$ which satisfy $t_i > 1/2$.

\begin{lemma}\label{lemma:lowerBound:problemReductionToDistanceOfTildeProcesses}
Let $\mu \colon \mathbb{R} \rightarrow \mathbb{R}$ be a function satisfying $(\mu 1)$ and $(\mu 2)$. Let $x_0 \in \mathbb{R}$ and $X, \widetilde{X}\colon [0,1] \times \Omega \rightarrow \mathbb{R}$ be strong solutions of the SDE \eqref{eqn:basicSDE} with initial value $x_0$ and driving Brownian motion $W$ and $\widetilde{W}$, respectively. Then there exist constants $c,C_1, C_2 > 0$ such that for all $i \in \{1, \dots, n\}$ with $t_i > \frac{1}{2}$ it holds
\begin{flalign*}
\EE &|\int_{t_{i-1}}^{t_i} \mu(X_s) - \mu(G_{\mu}^{-1}(G_{\mu}(\widetilde{X}_s) - G_{\mu}(\widetilde{X}_{t_{i-1}}) + G_{\mu}(X_{t_{i-1}}))) \: ds|^2 
\\&\geq c \EE |\int_{t_{i-1}}^{t_i} \mu(X_s) - \mu(\widetilde{X}_{s}) \: ds|^2 - \frac{C_1}{n} \EE |G_{\mu}(X_{t_{i-1}}) - G_{\mu}(\widetilde{X}_{t_{i-1}})|^2 - \frac{C_2}{n^{5 \slash 2 + 1 \slash 16}}.
\end{flalign*}
\end{lemma}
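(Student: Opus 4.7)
The plan is to reduce the problem to an upper bound for the ``shift error'' $\mu(\widetilde X_s) - \mu(Z_s)$, where I set
\[ Z_s := G_\mu^{-1}\bigl(G_\mu(\widetilde X_s) - G_\mu(\widetilde X_{t_{i-1}}) + G_\mu(X_{t_{i-1}})\bigr) \]
for $s \in [t_{i-1}, t_i]$. Decomposing $\mu(X_s) - \mu(Z_s) = (\mu(X_s) - \mu(\widetilde X_s)) + (\mu(\widetilde X_s) - \mu(Z_s))$ and applying the pointwise inequality $(a+b)^2 \geq \frac12 a^2 - b^2$ to the integrated quantity yields
\[ \EE\Big|\int_{t_{i-1}}^{t_i}\!\! (\mu(X_s) - \mu(Z_s))\, ds\Big|^2 \geq \tfrac12 \EE\Big|\int_{t_{i-1}}^{t_i}\!\! (\mu(X_s) - \mu(\widetilde X_s))\, ds\Big|^2 - \EE\Big|\int_{t_{i-1}}^{t_i}\!\!(\mu(\widetilde X_s) - \mu(Z_s))\, ds\Big|^2, \]
so it suffices to bound the last term by $\frac{C_1}{n}\EE|G_\mu(X_{t_{i-1}}) - G_\mu(\widetilde X_{t_{i-1}})|^2 + \frac{C_2}{n^{5/2+1/16}}$, which will give the claim with $c = 1/2$.

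To establish this bound, I apply Cauchy--Schwarz in time together with Lemma \ref{lemma:functionalsWelldefinedMuOfLinearGrowth}(i) to obtain
\[ \EE\Big|\int_{t_{i-1}}^{t_i}\!\!(\mu(\widetilde X_s) - \mu(Z_s))\, ds\Big|^2 \leq \hat C\, (t_i - t_{i-1}) \int_{t_{i-1}}^{t_i} \EE\Big[|\widetilde X_s - Z_s|^2 + \sum_{j=1}^k 1_{D_j}(\widetilde X_s, Z_s)\Big] ds, \]
where $D_j = \{(x,y) \in \R^2 : (x-\xi_j)(y-\xi_j) \leq 0\}$. The Lipschitz-continuous part is immediate: since $G_\mu^{-1}$ is Lipschitz by Lemma \ref{lemma:basicPropertiesOfG} and the shift $G_\mu(X_{t_{i-1}}) - G_\mu(\widetilde X_{t_{i-1}})$ is independent of $s$ on $[t_{i-1}, t_i]$, one has $|\widetilde X_s - Z_s| \leq L_{G_\mu^{-1}} |G_\mu(X_{t_{i-1}}) - G_\mu(\widetilde X_{t_{i-1}})|$, and combining this with $t_i - t_{i-1} \leq 2/n$ absorbs the Lipschitz contribution into $\frac{C_1}{n}\EE|G_\mu(X_{t_{i-1}}) - G_\mu(\widetilde X_{t_{i-1}})|^2$.

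The main obstacle is the indicator part: the event $(\widetilde X_s - \xi_j)(Z_s - \xi_j) \leq 0$ forces $|\widetilde X_s - \xi_j| \leq |\widetilde X_s - Z_s| \leq R$, where $R := L_{G_\mu^{-1}} |G_\mu(X_{t_{i-1}}) - G_\mu(\widetilde X_{t_{i-1}})|$ is random. To decouple the random radius from $\widetilde X_s$ I introduce a deterministic threshold $\delta > 0$ and split $\PP(|\widetilde X_s - \xi_j| \leq R) \leq \PP(|\widetilde X_s - \xi_j| \leq \delta) + \PP(R > \delta)$. The assumption $t_i > 1/2$ guarantees $s \geq 1/4$ for $n$ large, so Lemma \ref{lemma:boundProbability:SolutionIsInInterval} yields $\PP(|\widetilde X_s - \xi_j| \leq \delta) \leq C'\delta$, while Markov's inequality at order $p = 3$ combined with Lemma \ref{lemma:boundLPDistanceOfSolutionsAtInterpolation} and the Lipschitz continuity of $G_\mu$ gives $\PP(R > \delta) \leq C'' \delta^{-3} n^{-9/4}$. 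Optimizing the sum $C'\delta + C''\delta^{-3} n^{-9/4}$ in $\delta \sim n^{-9/16}$ produces $\EE[1_{D_j}(\widetilde X_s, Z_s)] \leq C'''/n^{9/16}$, and an extra factor $(t_i - t_{i-1})^2 \leq 4/n^2$ delivers a contribution of exactly the required order $n^{-5/2 - 1/16}$.

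The delicate point is the exponent bookkeeping: $p = 3$ is the smallest integer Markov moment that produces $9/16 \geq 1/2 + 1/16$, so Lemma \ref{lemma:boundLPDistanceOfSolutionsAtInterpolation} must be invoked at $p = 3$ (not merely $p = 2$), and the particular exponent $5/2 + 1/16$ in the statement---sharper than $5/2$ yet weaker than the $5/2 + 1/6$ of Lemma \ref{lemma:lowerBound:dropRecursionIntegralOfSDE:boundJumpPositios}---is precisely what this optimization delivers. Assembling the Lipschitz contribution and the indicator contribution completes the upper bound on the shift-error term and hence the lower bound.
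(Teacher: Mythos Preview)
Your proof is correct and follows essentially the same strategy as the paper: the same $(a+b)^2 \geq \tfrac12 a^2 - b^2$ reduction to a shift-error term, the same Lipschitz estimate for the continuous contribution, the same splitting $\PP(|\widetilde X_s-\xi_j|\le R)\le \PP(|\widetilde X_s-\xi_j|\le\delta)+\PP(R>\delta)$ handled via the density bound of Lemma~\ref{lemma:boundProbability:SolutionIsInInterval} and a third-moment Markov estimate using Lemma~\ref{lemma:boundLPDistanceOfSolutionsAtInterpolation}, and the same balancing $\delta\sim n^{-9/16}$ (the paper writes it as $\varepsilon=1/16$ with $\delta=n^{-1/2-\varepsilon}$). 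The only cosmetic difference is that you invoke Lemma~\ref{lemma:functionalsWelldefinedMuOfLinearGrowth}(i) directly, whereas the paper uses part~(ii) to decompose $\mu=\mu_{\mathrm{cont}}+\sum_j\alpha_j 1_{[\xi_j,\infty)}+\sum_j\gamma_j 1_{\{\xi_j\}}$ and treats the three pieces separately---your route is slightly more economical since it avoids the extra density argument needed for the point-mass indicators.
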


\begin{proof}
Let $i \in \{1, \dots, n\}$ with $t_i > \frac{1}{2}$. First of all, we use $\frac{1}{2}(a+b)^2 - b^2 \leq a^2$ for $a,b \in \mathbb{R}$ to obtain
\begin{flalign*}
\EE &|\int_{t_{i-1}}^{t_i} \mu(X_s) - \mu(G_{\mu}^{-1}(G_{\mu}(\widetilde{X}_s) - G_{\mu}(\widetilde{X}_{t_{i-1}}) + G_{\mu}(X_{t_{i-1}}))) \: ds|^2 
\\&\geq \frac{1}{2}\EE |\int_{t_{i-1}}^{t_i} \mu(X_s) - \mu(\widetilde{X}_{s}) \: ds|^2
\\& \quad  - \EE |\int_{t_{i-1}}^{t_i} \mu(\widetilde{X}_{s}) - \mu(G_{\mu}^{-1}(G_{\mu}(\widetilde{X}_s) - G_{\mu}(\widetilde{X}_{t_{i-1}}) + G_{\mu}(X_{t_{i-1}}))) \: ds|^2
\end{flalign*}
Hence, it is sufficient to show the existence of constants $C_1, C_2 > 0$ with
\begin{flalign*}
\EE &|\int_{t_{i-1}}^{t_i} \mu(\widetilde{X}_{s}) - \mu(G_{\mu}^{-1}(G_{\mu}(\widetilde{X}_s) - G_{\mu}(\widetilde{X}_{t_{i-1}}) + G_{\mu}(X_{t_{i-1}}))) \: ds|^2 
\\&\leq \frac{C_1}{n} \EE |G_{\mu}(X_{t_{i-1}}) - G_{\mu}(\widetilde{X}_{t_{i-1}})|^2 + \frac{C_2}{n^{5 \slash 2 + 1 \slash 16}}.
\end{flalign*}
Therefore, we note that according to Lemma \ref{lemma:functionalsWelldefinedMuOfLinearGrowth} we can fix a Lipschitz-continuous function $\mu_{cont} \colon \mathbb{R} \rightarrow \mathbb{R}$ and real numbers $\alpha_1, \dots, \alpha_k, \gamma_1, \dots, \gamma_k \in \mathbb{R}$ such that
\begin{flalign*}
\mu = \mu_{cont} + \sum_{i=1}^k \alpha_i 1_{[\xi_i, \infty)} + \sum_{i=1}^k \gamma_i 1_{\{\xi_i\}}.
\end{flalign*}
In the following we will derive for $\xi \in \mathbb{R}$ constants $C_1^{cont}, C_2^{\xi} > 0$ such that 
\begin{flalign}
\begin{aligned}
\EE &|\int_{t_{i-1}}^{t_i} \mu_{cont}(\widetilde{X}_{s}) - \mu_{cont}(G_{\mu}^{-1}(G_{\mu}(\widetilde{X}_s) - G_{\mu}(\widetilde{X}_{t_{i-1}}) + G_{\mu}(X_{t_{i-1}}))) \: ds|^2
\\& \leq \frac{C_1^{cont}}{n^2} \EE |G_{\mu}(X_{t_{i-1}}) - G_{\mu}(\widetilde{X}_{t_{i-1}})|^2
\end{aligned} \label{eqn:upperBound:increments:continuous}
\end{flalign} 
and
\begin{flalign}
\EE &|\int_{t_{i-1}}^{t_i} 1_{[\xi, \infty)}(\widetilde{X}_{s}) - 1_{[\xi, \infty)}(G_{\mu}^{-1}(G_{\mu}(\widetilde{X}_s) - G_{\mu}(\widetilde{X}_{t_{i-1}}) + G_{\mu}(X_{t_{i-1}}))) \: ds|^2 \leq \frac{C_2^{\xi}}{n^{5 \slash 2 + 1 \slash 16}} \label{eqn:upperBound:increments:discontinuous}
\end{flalign}
as well as 
\begin{flalign}
\EE &|\int_{t_{i-1}}^{t_i} 1_{\{\xi\}}(\widetilde{X}_{s}) - 1_{\{\xi\}}(G_{\mu}^{-1}(G_{\mu}(\widetilde{X}_s) - G_{\mu}(\widetilde{X}_{t_{i-1}}) + G_{\mu}(X_{t_{i-1}}))) \: ds|^2 = 0. \label{eqn:upperBound:increments:discontinuous:jumpPoints}
\end{flalign}
Having done so, the claim follows immediately.

We now turn our attention to the derivation of \eqref{eqn:upperBound:increments:continuous}. Let $L_{\mu_{cont}} > 0$ be the Lipschitz-constant of $\mu_{cont}$ and let $L_{G_{\mu}^{-1}} > 0$ be the Lipschitz-constant of $G_{\mu}^{-1}$ which exists due to Lemma \ref{lemma:basicPropertiesOfG}. Then it follows from the H\"older-inequality and in consideration of $t_i - t_{i-1} \leq \frac{2}{n}$
\begin{flalign*}
\EE &|\int_{t_{i-1}}^{t_i} \mu_{cont}(\widetilde{X}_{s}) - \mu_{cont}(G_{\mu}^{-1}(G_{\mu}(\widetilde{X}_s) - G_{\mu}(\widetilde{X}_{t_{i-1}}) + G_{\mu}(X_{t_{i-1}}))) \: ds|^2 
\\&\leq (t_i - t_{i-1}) \EE \int_{t_{i-1}}^{t_i} |\mu_{cont}(G_{\mu}^{-1}(G_{\mu}(\widetilde{X}_s))) - \mu_{cont}(G_{\mu}^{-1}(G_{\mu}(\widetilde{X}_s) - G_{\mu}(\widetilde{X}_{t_{i-1}}) + G_{\mu}(X_{t_{i-1}}))) |^2 \: ds 
\\&\leq \frac{4}{n^2} \cdot (L_{\mu_{cont}})^2 \cdot (L_{G_{\mu}^{-1}})^2 \cdot \EE |G_{\mu}(X_{t_{i-1}}) - G_{\mu}(\widetilde{X}_{t_{i-1}})|^2.
\end{flalign*}
Hence, we have found the desired upper bound in \eqref{eqn:upperBound:increments:continuous} and we will now derive the existence of $C_2^{\xi} > 0$ from \eqref{eqn:upperBound:increments:discontinuous} with $\xi \in \mathbb{R}$. Let $\xi \in \mathbb{R}$. We proceed similar to the proof of Lemma 14 in~\cite{MGY21}. Since $G_{\mu}$ is strictly monotonically increasing due to Lemma \ref{lemma:basicPropertiesOfG}, it follows with the H\"older-inequality that
\begin{flalign*}
\EE &|\int_{t_{i-1}}^{t_i} 1_{[\xi, \infty)}(\widetilde{X}_{s}) - 1_{[\xi, \infty)}(G_{\mu}^{-1}(G_{\mu}(\widetilde{X}_s) - G_{\mu}(\widetilde{X}_{t_{i-1}}) + G_{\mu}(X_{t_{i-1}}))) \: ds|^2
\\&\leq (t_i - t_{i-1}) \EE \int_{t_{i-1}}^{t_i} |1_{[G_{\mu}(\xi), \infty)}(G_{\mu}(\widetilde{X}_{s})) - 1_{[G_{\mu}(\xi), \infty)}(G_{\mu}(\widetilde{X}_s) - G_{\mu}(\widetilde{X}_{t_{i-1}}) + G_{\mu}(X_{t_{i-1}}))|^2 \: ds
\end{flalign*}
Since for all $x,z \in \mathbb{R}$ there holds the estimation
\begin{flalign*}
|1_{[G_{\mu}(\xi), \infty)}(x) - 1_{[G_{\mu}(\xi), \infty)}(x+z)| &= 1_{\{x < G_{\mu}(\xi) \leq x + z\}} + 1_{\{x + z < G_{\mu}(\xi) \leq x\}} 
\\&= 1_{\{0 < G_{\mu}(\xi) - x \leq z\}} + 1_{\{z < G_{\mu}(\xi) - x \leq 0\}} \leq 1_{\{|G_{\mu}(\xi) - x| \leq |z|\}}, 
\end{flalign*}
it follows that
\begin{flalign}
\begin{aligned}
\EE &|\int_{t_{i-1}}^{t_i} 1_{[\xi, \infty)}(\widetilde{X}_{s}) - 1_{[\xi, \infty)}(G_{\mu}^{-1}(G_{\mu}(\widetilde{X}_s) - G_{\mu}(\widetilde{X}_{t_{i-1}}) + G_{\mu}(X_{t_{i-1}}))) \: ds|^2 
\\&\leq (t_i - t_{i-1}) \int_{t_{i-1}}^{t_i} \mathbb{P}(|G_{\mu}(\widetilde{X}_{s}) - G_{\mu}(\xi)| \leq | G_{\mu}(X_{t_{i-1}}) - G_{\mu}(\widetilde{X}_{t_{i-1}}) |) \: ds. 
\end{aligned}\label{eqn:upperBound:increments:discontinuous:indicator}
\end{flalign}
Fix $\varepsilon \in (0, \infty)$. Then we have  for $s \in [t_{i-1}, t_i]$
\begin{flalign}
\begin{aligned}
\mathbb{P}&(|G_{\mu}(\widetilde{X}_{s}) - G_{\mu}(\xi)| \leq | G_{\mu}(X_{t_{i-1}}) - G_{\mu}(\widetilde{X}_{t_{i-1}}) |) 
\\&\leq \mathbb{P}(|G_{\mu}(\widetilde{X}_{s}) - G_{\mu}(\xi)| \leq n^{-1/2 - \varepsilon}) + \mathbb{P}(n^{-1/2 - \varepsilon} \leq | G_{\mu}(X_{t_{i-1}}) - G_{\mu}(\widetilde{X}_{t_{i-1}}) |). \
\end{aligned}\label{eqn:upperBound:increments:discontinuous:indicator:separateProb}
\end{flalign}
Further, we will investigate the last-mentioned probabilities in more detail and we will derive constants $D_1^{\xi}, D_2^{\xi} > 0$ with
\begin{flalign}
\mathbb{P}(|G_{\mu}(\widetilde{X}_{s}) - G_{\mu}(\xi)| \leq n^{-1/2 - \varepsilon}) \leq \frac{D_1^{\xi}}{n^{1/2 + \varepsilon}}, \quad s \in [t_{i-1}, t_i], \label{eqn:upperBound:increments:discontinuous:indicator:prob1}
\end{flalign}
as well as
\begin{flalign}
\mathbb{P}(n^{-1/2 - \varepsilon} \leq | G_{\mu}(X_{t_{i-1}}) - G_{\mu}(\widetilde{X}_{t_{i-1}}) |) \leq \frac{D_2^{\xi}}{n^{3/4 - 3\varepsilon}} \label{eqn:upperBound:increments:discontinuous:indicator:prob2}.
\end{flalign}
Plugging \eqref{eqn:upperBound:increments:discontinuous:indicator:prob1} and \eqref{eqn:upperBound:increments:discontinuous:indicator:prob2} into \eqref{eqn:upperBound:increments:discontinuous:indicator:separateProb} together with  \eqref{eqn:upperBound:increments:discontinuous:indicator}, $t_i - t_{i-1} \leq \frac{2}{n}$ and the choice $\varepsilon = 1/16$ provides the desired estimation in \eqref{eqn:upperBound:increments:discontinuous}. Let us start now with the derivation of \eqref{eqn:upperBound:increments:discontinuous:indicator:prob1}. Since $G_{\mu}^{-1}$ is strictly monotonically increasing due to Lemma \ref{lemma:basicPropertiesOfG}, it holds for $s \in [t_{i-1}, t_i]$
\begin{flalign*}
\mathbb{P}(|G_{\mu}(\widetilde{X}_{s}) - G_{\mu}(\xi)| \leq n^{-1/2 - \varepsilon})  &= \mathbb{P}(-n^{-1/2 - \varepsilon} \leq G_{\mu}(\widetilde{X}_{s}) - G_{\mu}(\xi) \leq n^{-1/2 - \varepsilon})
\\& = \mathbb{P}(G_{\mu}^{-1}(-n^{-1/2 - \varepsilon} + G_{\mu}(\xi)) \leq \widetilde{X}_{s} \leq G_{\mu}^{-1}(n^{-1/2 - \varepsilon} + G_{\mu}(\xi)) ).
\end{flalign*}
Because of $t_i > 1/2$, Lemma \ref{lemma:functionalsWelldefinedMuOfLinearGrowth} and Lemma \ref{lemma:boundProbability:SolutionIsInInterval} show the existence of a constant $d_1^{\xi} > 0$ which is independent of $n$ and which satisfies
\begin{flalign*}
\mathbb{P}&(G_{\mu}^{-1}(-n^{-1/2 - \varepsilon} + G_{\mu}(\xi)) \leq \widetilde{X}_{s} \leq G_{\mu}^{-1}(n^{-1/2 - \varepsilon} + G_{\mu}(\xi)) ) 
\\&\leq d_1^{\xi} (G_{\mu}^{-1}(n^{-1/2 - \varepsilon} + G_{\mu}(\xi))  - G_{\mu}^{-1}(-n^{-1/2 - \varepsilon} + G_{\mu}(\xi)) ).
\end{flalign*}
Since $G_{\mu}^{-1}$ is Lipschitz-continuous due to Lemma \ref{lemma:basicPropertiesOfG}, we obtain the existence of the constant $D_1^{\xi}$ in \eqref{eqn:upperBound:increments:discontinuous:indicator:prob1}. Next, we derive the constant $D_2^{\xi}$ in \eqref{eqn:upperBound:increments:discontinuous:indicator:prob2}. Therefore, let $L_{G_{\mu}} > 0$ be the Lipschitz-constant of $G_{\mu}$ which exists due to Lemma \ref{lemma:basicPropertiesOfG}. Then we obtain with an application of the Markov-inequality
\begin{flalign*}
\mathbb{P}(n^{-1/2 - \varepsilon} &\leq | G_{\mu}(X_{t_{i-1}}) - G_{\mu}(\widetilde{X}_{t_{i-1}}) |) 
\\&\leq \frac{1}{n^{-3/2 - 3\varepsilon}} \EE |G_{\mu}(X_{t_{i-1}}) - G_{\mu}(\widetilde{X}_{t_{i-1}}) |^3 \leq (L_{G_{\mu}})^3 n^{3/2 + 3\varepsilon} \cdot \EE |X_{t_{i-1}} - \widetilde{X}_{t_{i-1}} |^3.
\end{flalign*}
The existence of $D_2^{\xi}$ immediately follows with Lemma \ref{lemma:boundLPDistanceOfSolutionsAtInterpolation} since $(\mu 1)$ and ($\mu 2$) hold.

Now let us show \eqref{eqn:upperBound:increments:discontinuous:jumpPoints} for some $\xi \in \mathbb{R}$. It holds due to the bijectivity of $G_\mu$
\begin{flalign*}
\EE &|\int_{t_{i-1}}^{t_i} 1_{\{\xi\}}(\widetilde{X}_{s}) - 1_{\{\xi\}}(G_{\mu}^{-1}(G_{\mu}(\widetilde{X}_s) - G_{\mu}(\widetilde{X}_{t_{i-1}}) + G_{\mu}(X_{t_{i-1}}))) \: ds|^2
\\& \leq 2 \int_{t_{i-1}}^{t_i}  \mathbb{P}(\widetilde{X}_s = \xi) + \mathbb{P}(G_{\mu}(\widetilde{X}_s) = G_\mu(\xi) + G_{\mu}(\widetilde{X}_{t_{i-1}}) - G_{\mu}(X_{t_{i-1}})) \: ds.
\end{flalign*}
An application of Lemma \ref{lemma:functionalsWelldefinedMuOfLinearGrowth} and Lemma \ref{lemma:boundProbability:SolutionIsInInterval} shows that $\mathbb{P}(\widetilde{X}_s = \xi)  = 0$ for all $s \in (0,1]$. Hence, it suffices to prove that it also holds $\mathbb{P}(G_{\mu}(\widetilde{X}_s) = G_\mu(\xi) + G_{\mu}(\widetilde{X}_{t_{i-1}}) - G_{\mu}(X_{t_{i-1}})) = 0$ for all $s \in (t_{i-1}, t_i]$. Let $s \in (t_{i-1}, t_i]$ and let $F$ as well as $\widetilde{V}$ be given as in Lemma \ref{lemma:representationOfXWithF}. Then we have
\begin{flalign*}
\mathbb{P}&(G_{\mu}(\widetilde{X}_s) = G_\mu(\xi) + G_{\mu}(\widetilde{X}_{t_{i-1}}) - G_{\mu}(X_{t_{i-1}}))
\\ &= \mathbb{P}(G_{\mu}(F(\widetilde{X}_{t_{i-1}}, \widetilde{V})(s - t_{i-1})) = G_\mu(\xi) + G_{\mu}(\widetilde{X}_{t_{i-1}}) - G_{\mu}(X_{t_{i-1}})).
\end{flalign*}
Since $(X_{t_{i-1}}, \widetilde{X}_{t_{i-1}})$ and $\widetilde{V}$ are independent it thus suffices to show that it holds for $\mathbb{P}^{(X_{t_{i-1}}, \widetilde{X}_{t_{i-1}})}$-almost all $(z, \tilde{z}) \in \mathbb{R}^2$
\begin{flalign}
\mathbb{P}(G_{\mu}(F(\tilde{z}, \widetilde{V})(s - t_{i-1})) = G_\mu(\xi) + G_{\mu}(\tilde{z}) - G_{\mu}(z)) = 0. \label{eqn:upperBound:increments:discontinuous:jumpPoints:conditoned}
\end{flalign}
By Lemma \ref{lemma:existenceOfFunctionF} the process $F(\tilde{z}, \widetilde{V})$ is a strong solution of the SDE \eqref{eqn:basicSDE} with initial value $\tilde{z}$ and driving Brownian motion $\widetilde{V}$. Since $\mu$ satisfies the linear growth property according to \mbox{Lemma \ref{lemma:functionalsWelldefinedMuOfLinearGrowth},} the random variable $F(\tilde{z}, \widetilde{V})(s - t_{i-1})$ thus has a Lebesgue-density according to Theorem 2.1 in~\cite{FP10}. Hence, we obtain \eqref{eqn:upperBound:increments:discontinuous:jumpPoints:conditoned} which concludes the proof.
\end{proof}

\begin{lemma}\label{lemma:lowerBound:dropRecursionIntegralOfSDE}
Let $\mu \colon \mathbb{R} \rightarrow \mathbb{R}$ be a function satisfying $(\mu 1)$ and $(\mu 2)$. Let $x_0 \in \mathbb{R}$ and $X,\widetilde{X} \colon [0,1] \times \Omega \rightarrow \mathbb{R}$ be strong solutions of the SDE \eqref{eqn:basicSDE} on the time interval $[0,1]$ with initial value $x_0$ and driving Brownian motion $W$ and $\widetilde{W}$, respectively. Then there exists a constant $c > 0$ such that for all $i \in \{1, \dots, n\}$ with $t_{i} > \frac{1}{2}$ it holds
\begin{flalign*}
\EE &|\int_{t_{i-1}}^{t_i} (\mu(X_s) - \mu(\widetilde{X}_s)) \: ds|^2
\\& \geq \frac{1}{16} \EE | \int_{t_{i-1}}^{t_i} (\mu(X_{t_{i-1}} + W_s - W_{t_{i-1}}) - \mu(X_{t_{i-1}} + \widetilde{W}_s - \widetilde{W}_{t_{i-1}})) \: ds |^2 - \frac{c}{n^{5 \slash 2 + 1 \slash 16}}.
\end{flalign*}
\end{lemma}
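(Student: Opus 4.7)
The idea is to insert the two ``frozen-drift'' processes
\[
\hat X_s := X_{t_{i-1}} + W_s - W_{t_{i-1}}, \qquad \hat{\widetilde X}_s := X_{t_{i-1}} + \widetilde W_s - \widetilde W_{t_{i-1}}, \qquad s \in [t_{i-1}, t_i],
\]
both starting from the same initial value $X_{t_{i-1}}$, so that the right-hand side of the lemma equals $\tfrac{1}{16}\EE A^2$ with $A := \int_{t_{i-1}}^{t_i}(\mu(\hat X_s) - \mu(\hat{\widetilde X}_s))\,ds$. Splitting the left-hand integrand as
\[
\mu(X_s) - \mu(\widetilde X_s) = (\mu(\hat X_s) - \mu(\hat{\widetilde X}_s)) + R_1(s) - R_2(s),
\]
with $R_1(s) := \mu(X_s) - \mu(\hat X_s)$ and $R_2(s) := \mu(\widetilde X_s) - \mu(\hat{\widetilde X}_s)$, and using the elementary inequality $(a+b)^2 \geq \tfrac{1}{16}a^2 - \tfrac{1}{15}b^2$ together with $(R_1+R_2)^2 \leq 2R_1^2 + 2R_2^2$, the claim reduces to
$\EE\bigl|\int_{t_{i-1}}^{t_i} R_j(s)\,ds\bigr|^2 \leq C/n^{5/2+1/16}$ for $j=1,2$.

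\textbf{The $R_1$-term.} By Lemma \ref{lemma:functionalsWelldefinedMuOfLinearGrowth}(i), $|R_1(s)| \leq C_1(|X_s - \hat X_s| + \sum_{j=1}^k 1_{D_j}(X_s,\hat X_s))$. The Lipschitz contribution is bounded by $(\int |X_s - \hat X_s|\,ds)^2$; since $|X_s - \hat X_s| = |\int_{t_{i-1}}^s \mu(X_u)\,du|$ and $\mu$ has linear growth, standard moment estimates for $X$ yield a bound of order $n^{-4}$. Each of the $k$ indicator contributions is precisely of the form treated by Lemma \ref{lemma:lowerBound:dropRecursionIntegralOfSDE:boundJumpPositios} applied to $(X,W)$, giving $C(t_i-t_{i-1})^{5/2+1/6} \leq Cn^{-5/2-1/6}$. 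Both are $o(n^{-5/2-1/16})$.

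\textbf{The $R_2$-term.} This is the main obstacle: Lemma \ref{lemma:lowerBound:dropRecursionIntegralOfSDE:boundJumpPositios} does not apply directly, because $\hat{\widetilde X}_s$ is frozen at $X_{t_{i-1}}$ rather than at $\widetilde X_{t_{i-1}}$, so the indicator $1_{D_j}(\widetilde X_s, \hat{\widetilde X}_s)$ is not in the form of that lemma. Write $Z_s := \widetilde X_s - \hat{\widetilde X}_s = (\widetilde X_{t_{i-1}} - X_{t_{i-1}}) + \int_{t_{i-1}}^s \mu(\widetilde X_u)\,du$. Again by Lemma \ref{lemma:functionalsWelldefinedMuOfLinearGrowth}(i), the Lipschitz part of $R_2$ contributes at most $(t_i-t_{i-1})^2 \EE|Z_s|^2 \leq C\,n^{-2}(n^{-3/2} + n^{-2}) = O(n^{-7/2})$, using Lemma \ref{lemma:boundLPDistanceOfSolutionsAtInterpolation} (which requires both $(\mu 1)$ and $(\mu 2)$) to control $\EE|\widetilde X_{t_{i-1}} - X_{t_{i-1}}|^2$. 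For the indicator part I use
\[
1_{D_j}(\widetilde X_s, \hat{\widetilde X}_s) \leq 1_{\{|\widetilde X_s - \xi_j| \leq |Z_s|\}} \leq 1_{\{|\widetilde X_s - \xi_j|\leq n^{-9/16}\}} + 1_{\{|Z_s| > n^{-9/16}\}},
\]
mimicking the splitting from the proof of Lemma \ref{lemma:lowerBound:problemReductionToDistanceOfTildeProcesses}. The first term has probability $\leq C\,n^{-9/16}$ by Lemma \ref{lemma:boundProbability:SolutionIsInInterval} (applicable since $s \geq t_{i-1} > 1/2 - 2/n \geq 1/4$ for $n \geq 16$); the second, via Markov with $p=3$, Lemma \ref{lemma:boundLPDistanceOfSolutionsAtInterpolation} for $\widetilde X_{t_{i-1}} - X_{t_{i-1}}$, and linear growth of $\mu$ for the integral part, is also $\leq C\,n^{-9/16}$. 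Cauchy-Schwarz in time then gives
\[
\EE\Bigl(\int_{t_{i-1}}^{t_i} 1_{D_j}(\widetilde X_s, \hat{\widetilde X}_s)\,ds\Bigr)^2 \leq (t_i-t_{i-1})^2 \cdot O(n^{-9/16}) \leq C\,n^{-41/16} = C\,n^{-5/2-1/16},
\]
which is the tolerated error. The exponent $9/16$ is the balance point, exactly as in Lemma \ref{lemma:lowerBound:problemReductionToDistanceOfTildeProcesses}.
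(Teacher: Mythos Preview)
Your argument is correct and follows essentially the same approach as the paper. The only difference is in the decomposition of the remainder: the paper inserts the additional intermediate process $\widetilde X_{t_{i-1}}+\widetilde W_s-\widetilde W_{t_{i-1}}$, thereby splitting your $R_2$ into two pieces $D_i$ and $E_i$; the piece $D_i$ is then bounded exactly like your $R_1$ (via Lemma~\ref{lemma:lowerBound:dropRecursionIntegralOfSDE:boundJumpPositios}, giving the sharper exponent $5/2+1/6$), and only $E_i$ requires the threshold splitting at level $n^{-1/2-1/16}$ together with Lemma~\ref{lemma:boundProbability:SolutionIsInInterval} and Lemma~\ref{lemma:boundLPDistanceOfSolutionsAtInterpolation}. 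Your direct treatment of $R_2$ via the same threshold splitting applied to the combined difference $Z_s=(\widetilde X_{t_{i-1}}-X_{t_{i-1}})+\int_{t_{i-1}}^{s}\mu(\widetilde X_u)\,du$ is a mild streamlining that avoids the intermediate step and still lands exactly on the required rate $n^{-5/2-1/16}$.
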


\begin{proof}
We will use some arguments of the proof of Lemma 14 in~\cite{MGY21} and similar to this proof we set for $i \in \{1, \dots, n\}$
\begin{flalign*}
A_i &= \int_{t_{i-1}}^{t_i} (\mu(X_{t_{i-1}} + W_t - W_{t_{i-1}}) - \mu(X_{t_{i-1}} + \widetilde{W}_t - \widetilde{W}_{t_{i-1}})) \: dt,
\\
B_i &= \int_{t_{i-1}}^{t_i} (\mu(X_{t_{i-1}} + W_t - W_{t_{i-1}}) - \mu(X_t)) \: dt,
\\
C_i &= \int_{t_{i-1}}^{t_i} (\mu(X_t) - \mu(\widetilde{X}_t)) \: dt,
\\
D_i &= \int_{t_{i-1}}^{t_i} (\mu(\widetilde{X}_t) - \mu(\widetilde{X}_{t_{i-1}} + \widetilde{W}_t - \widetilde{W}_{t_{i-1}})) \: dt,
\\
E_i &= \int_{t_{i-1}}^{t_i} (\mu(\widetilde{X}_{t_{i-1}} + \widetilde{W}_t - \widetilde{W}_{t_{i-1}}) - \mu(X_{t_{i-1}} + \widetilde{W}_t - \widetilde{W}_{t_{i-1}})) \: dt.
\end{flalign*}
Because of $A_i = B_i + C_i + D_i + E_i$, we immediately obtain
\begin{flalign*}
\EE |A_i|^2 \leq 16(\EE|B_i|^2 + \EE|C_i|^2 + \EE|D_i|^2 + \EE|E_i|^2) = 16(2\EE|B_i|^2 + \EE|C_i|^2 + \EE|E_i|^2).
\end{flalign*}
So we see that it suffices to show the existence of a constant $C > 0$ such that for all $i \in \{1, \dots, n\}$ with $t_i > \frac{1}{2}$ it holds
\begin{flalign*}
\max\{\EE|B_i|^2, \EE|E_i|^2\} \leq \frac{C}{n^{5 \slash 2 + 1 \slash 16}}.
\end{flalign*}
Therefore, let $i \in \{1, \dots, n\}$ with $t_i > \frac{1}{2}$. We note that all constants which are derived in the following are independent of $i$. According to Lemma \ref{lemma:functionalsWelldefinedMuOfLinearGrowth} there exists a constant $c_1 > 0$ such that for all $t \in [t_{i-1}, t_i]$ it holds
\begin{flalign*}
|\mu&(X_{t_{i-1}} + W_t - W_{t_{i-1}}) - \mu(X_t)| 
\\&\leq c_1(|X_{t_{i-1}} + W_t - W_{t_{i-1}} - X_t| + \sum_{j=1}^k 1_{\{(X_{t_{i-1}} + W_t - W_{t_{i-1}}- \xi_j)(X_t - \xi_j) \leq 0\}})
\\& = c_1(|\int_{t_{i-1}}^t \mu(X_s) \: ds| + \sum_{j=1}^k 1_{\{(X_{t_{i-1}} + W_t - W_{t_{i-1}}- \xi_j)(X_t - \xi_j) \leq 0\}}).
\end{flalign*}
This yields the existence of a constant $c_2 > 0$ with
\begin{flalign*}
\EE|B_i|^2 &\leq c_2 \big( \EE\big[|\int_{t_{i-1}}^{t_i} |\int_{t_{i-1}}^t \mu(X_s) \: ds| \: dt|^2\big] + \sum_{j=1}^k \EE | \int_{t_{i-1}}^{t_i}1_{\{(X_{t_{i-1}} + W_t - W_{t_{i-1}}- \xi_j)(X_t - \xi_j) \leq 0\}} \: dt |^2 \big)
\\& \leq c_2 \big( \EE\big[(t_i - t_{i-1})^4 \cdot \sup_{t\in [0,1]} |\mu(X_t)|^2 \big] 
+ \sum_{j=1}^k \EE | \int_{t_{i-1}}^{t_i}1_{\{(X_{t_{i-1}} + W_t - W_{t_{i-1}}- \xi_j)(X_t - \xi_j) \leq 0\}}\: dt |^2 \big).
\end{flalign*}
Since $\mu$ satisfies the linear growth property according to Lemma \ref{lemma:functionalsWelldefinedMuOfLinearGrowth}, we obtain a constant $c_3 > 0$ with $\EE [\sup_{t\in [0,1]} |\mu(X_t)|^2] \leq c_3$. Due to $t_i - t_{i-1}  \leq \frac{2}{n}$ and Lemma \ref{lemma:lowerBound:dropRecursionIntegralOfSDE:boundJumpPositios}, there thus exists a constant $c_4 > 0$ with 
\begin{flalign*}
\EE|B_i|^2 \leq \frac{c_4}{n^{5 \slash 2+ 1 \slash  6}}.
\end{flalign*}
So, we have found the desired bound for $\EE|B_i|^2$ and we will subsequently derive the corresponding bound for $\EE|E_i|^2$. It holds
\begin{flalign*}
\EE|E_i|^2 \leq (t_i - t_{i-1}) \int_{t_{i-1}}^{t_i} \EE |\mu(\widetilde{X}_{t_{i-1}} + \widetilde{W}_t - \widetilde{W}_{t_{i-1}}) - \mu(X_{t_{i-1}} + \widetilde{W}_t - \widetilde{W}_{t_{i-1}})|^2 \: dt.
\end{flalign*}
Since the random variables $(X_{t_{i-1}}, \widetilde{X}_{t_{i-1}})$ and $(\widetilde{W}_t - \widetilde{W}_{t_{i-1}})_{t \in [t_{i-1}, t_i]}$ are independent according to Lemma \ref{lemma:representationOfXWithF}, we obtain
\begin{flalign*}
\EE|E_i|^2 \leq (t_i - t_{i-1}) \int_{t_{i-1}}^{t_i} \int_{\mathbb{R}} \EE |\mu(\widetilde{X}_{t_{i-1}} +u) - \mu(X_{t_{i-1}} + u)|^2 \: \mathbb{P}^{\widetilde{W}_t - \widetilde{W}_{t_{i-1}}} (du) \: dt.
\end{flalign*}
Applying Lemma \ref{lemma:functionalsWelldefinedMuOfLinearGrowth} yields the existence of a constant $c_5 > 0$ such that for all $u \in \mathbb{R}$ it holds
\begin{flalign*}
|\mu(X_{t_{i-1}} + u) - \mu(\widetilde{X}_{t_{i-1}} +u)| \leq c_5(|X_{t_{i-1}} - \widetilde{X}_{t_{i-1}}| + \sum_{j=1}^k 1_{\{(X_{t_{i-1}} + u - \xi_j)(\widetilde{X}_{t_{i-1}} + u - \xi_j) \leq 0\}}).
\end{flalign*}
Hence, there exists a constant $c_6 > 0$ such that
\begin{flalign}
\begin{aligned}
\EE|E_i|^2 &\leq c_6((t_i - t_{i-1})^2\EE|X_{t_{i-1}} - \widetilde{X}_{t_{i-1}}|^2 
\\& \quad+ (t_i - t_{i-1}) \sum_{j=1}^k \int_{t_{i-1}}^{t_i}\int_{\mathbb{R}}\EE|1_{\{(X_{t_{i-1}} + u - \xi_j)(\widetilde{X}_{t_{i-1}} + u - \xi_j) \leq 0\}}|\: \mathbb{P}^{\widetilde{W}_t - \widetilde{W}_{t_{i-1}}} (du)\: dt).
\end{aligned} \label{eqn:lowerBound:dropRecursionIntegralOfSDE:boundEi}
\end{flalign}
We will later show the existence of a constant $c_7 > 0$ such that
\begin{flalign}
(t_i - t_{i-1}) \sum_{j=1}^k \int_{t_{i-1}}^{t_i}\int_{\mathbb{R}}\EE|1_{\{(X_{t_{i-1}} + u - \xi_j)(\widetilde{X}_{t_{i-1}} + u - \xi_j) \leq 0\}}| \: \mathbb{P}^{\widetilde{W}_t - \widetilde{W}_{t_{i-1}}} (du) \: dt \leq \frac{c_7}{n^{5 \slash 2 + 1 \slash 16}}. \label{eqn:lowerBound:dropRecursionIntegralOfSDE:boundJumpParts}
\end{flalign}
Applying Lemma \ref{lemma:boundLPDistanceOfSolutionsAtInterpolation} yields the existence of a constant $c_8 > 0$ such that
\begin{flalign*}
\max_{i \in \{0, \dots, n\}} \EE |X_{t_{i-1}} - \widetilde{X}_{t_{i-1}}|^2 \leq \frac{c_8}{n^{3 \slash 2}}.
\end{flalign*}
Thus, we obtain in consideration of \eqref{eqn:lowerBound:dropRecursionIntegralOfSDE:boundEi} and \eqref{eqn:lowerBound:dropRecursionIntegralOfSDE:boundJumpParts} a constant $c_9 > 0$ such that it holds
\begin{flalign*}
\EE |E_i|^2 \leq \frac{c_9}{n^{5 \slash 2 + 1 \slash 16}}.
\end{flalign*}
Now the claim follows immediately.

It remains to prove \eqref{eqn:lowerBound:dropRecursionIntegralOfSDE:boundJumpParts}. Therefore, let $j \in \{1, \dots, k\}$ and $N \sim N(0,1)$. Then it holds
\begin{flalign*}
(t_i - t_{i-1})& \int_{t_{i-1}}^{t_i}\int_{\mathbb{R}}\EE|1_{\{(X_{t_{i-1}} + u - \xi_j)(\widetilde{X}_{t_{i-1}} + u - \xi_j) \leq 0\}}| \: \mathbb{P}^{\widetilde{W}_t - \widetilde{W}_{t_{i-1}}} (du) \: dt 
\\&\leq  (t_i - t_{i-1}) \int_{t_{i-1}}^{t_i}\int_{-1}^1 \EE|1_{\{(X_{t_{i-1}} + u - \xi_j)(\widetilde{X}_{t_{i-1}} + u - \xi_j) \leq 0\}}| \: \mathbb{P}^{\widetilde{W}_t - \widetilde{W}_{t_{i-1}}} (du) \: dt
\\&  \quad +(t_i - t_{i-1})^2 \cdot \mathbb{P}(\sqrt{t_i - t_{i-1}}|N| \geq 1).
\end{flalign*} 
Due to $t_i - t_{i-1} \leq \frac{2}{n}$, we thus obtain
\begin{flalign*}
\mathbb{P}(\sqrt{t_i - t_{i-1}}|N| \geq 1) \leq \mathbb{P}(|N| \geq \frac{\sqrt{n}}{\sqrt{2}}) \leq 2 e^{- \frac{n}{4}}.
\end{flalign*}
The upper bound in \eqref{eqn:lowerBound:dropRecursionIntegralOfSDE:boundJumpParts} now follows analogously to inequality (88) in the proof of Lemma 14 in~\cite{MGY21} where one has to observe that the set $\cup_{j=1}^k [\xi_j - 2, \xi_j + 2]$ is bounded and that one can thus use Lemma \ref{lemma:functionalsWelldefinedMuOfLinearGrowth} together with Lemma \ref{lemma:boundProbability:SolutionIsInInterval}.
\end{proof}

We now show a result which is originally proven in the Master's thesis~\cite{Ellinger2022}. The lemma is essential to drop the monotonicity condition on $\mu$. 

\begin{lemma}\label{lemma:lowerBoundForSquaredIntegral}
Let $(\Omega, \mathcal{F}, \mathbb{P})$ be a probability space, $t \in (0,1]$, $B,B^\prime \colon [0,t] \times \Omega \rightarrow \mathbb{R}$ be Brownian bridges on $[0,t]$ and let $U,V \colon \Omega \rightarrow \mathbb{R}$ be square-integrable random variables such that $B,B',U, V$ are independent and such that it holds $V \sim N(0, \frac{1}{t})$. Let $\mu_{disc} \colon \mathbb{R} \rightarrow \mathbb{R}$ be a step function satisfying $(\mu 1)$ and $(\mu 3)$ and let $i*\in \{1,\dots, k\}$ with $\mu_{disc}(\xi_{i*} +) \neq \mu_{disc}(\xi_{i*} -)$ as in $(\mu 3)$. Then there exist constants $c,d,D > 0$ which are independent of $U,V,B,B^\prime$ and $t$ such that
\begin{flalign}
\begin{aligned}
\EE&|\int_0^t (\mu_{disc}(U+sV+B_s) - \mu_{disc}(U+sV+ B'_s)) \: ds|^2 
\\&\geq c t^2\mathbb{P}(U \in [\xi_{i*}, \xi_{i*} + \sqrt{t}])- D e^{- d \slash t}.
\end{aligned} \label{eqn:LowerBoundDiscPart}
\end{flalign}
\end{lemma}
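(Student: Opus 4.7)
Set $X_s := U + sV + B_s$ and $Y_s := U + sV + B'_s$; conditional on $(U,V) = (u,v)$, both are Brownian motions on $[0,t]$ from $u$ to $u+tv$, and they are independent because $B$ and $B'$ are. Since $\mu_{disc}$ is a step function with jump $\alpha := \mu_{disc}(\xi_{i*}+) - \mu_{disc}(\xi_{i*}-) \neq 0$ at $\xi_{i*}$, write $\mu_{disc} = \alpha \cdot 1_{[\xi_{i*},\infty)} + r$, where $r$ is a step function continuous at $\xi_{i*}$ (its remaining jumps sit at $\xi_j$ for $j \neq i*$; the single-point corrections $1_{\{\xi_j\}}$ contribute nothing to the integrals since Brownian motion has no Lebesgue occupation time at a point). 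Put $\delta := \tfrac12 \min_{j \neq i*}|\xi_j - \xi_{i*}|$ (read as $+\infty$ when $k=1$), let $A$ be the event that both $X$ and $Y$ remain in $(\xi_{i*}-\delta, \xi_{i*}+\delta)$ throughout $[0,t]$, and set $I^X := \int_0^t 1_{X_s \geq \xi_{i*}}\,ds$, $I^Y := \int_0^t 1_{Y_s \geq \xi_{i*}}\,ds$. On $A$, $r(X_s) = r(Y_s)$ for a.e.\ $s$, hence
\begin{flalign*}
\EE\Big|\int_0^t (\mu_{disc}(X_s) - \mu_{disc}(Y_s))\,ds\Big|^2 \geq \alpha^2 \EE\big[1_A (I^X - I^Y)^2\big].
\end{flalign*}

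Introduce $E := \{U \in [\xi_{i*}, \xi_{i*}+\sqrt t]\}$ and $F := \{|V|\sqrt t \leq 1\}$; since $V\sqrt t \sim N(0,1)$, $p_0 := \mathbb{P}(F) > 0$ is a constant independent of $t$, and $\mathbb{P}(E \cap F) = p_0 \mathbb{P}(E)$ by independence. For $(u,v) \in E \times F$ both endpoints $u$ and $u+tv$ lie within $2\sqrt t$ of $\xi_{i*}$; the rescaling $\tilde X_r := (X_{rt} - \xi_{i*})/\sqrt t$ makes $\tilde X$ a Brownian motion on $[0,1]$ from $\tilde u := (u-\xi_{i*})/\sqrt t \in [0,1]$ to $\tilde u + v\sqrt t \in [-1,2]$, with $I^X/t = \int_0^1 1_{\tilde X_r \geq 0}\,dr$. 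Conditional independence of $I^X$ and $I^Y$ given $(U,V)$ gives
\begin{flalign*}
\EE[(I^X - I^Y)^2 \mid u,v] = 2 t^2 \operatorname{Var}\Big(\int_0^1 1_{\tilde X_r \geq 0}\,dr\Big),
\end{flalign*}
while the reflection principle for Brownian bridges yields $\mathbb{P}(A^c \mid u,v) \leq 8 e^{-\delta^2/(2t)}$ once $2\sqrt t \leq \delta/2$. Combined with $|I^X - I^Y| \leq t$, this forces $\EE[1_A(I^X - I^Y)^2 \mid u,v] \geq 2 c_0 t^2 - 8 t^2 e^{-\delta^2/(2t)}$, provided one has the uniform lower bound $c_0 := \inf_{(\tilde u,\tilde v) \in [0,1]\times[-1,1]} \operatorname{Var}\big(\int_0^1 1_{\tilde X_r \geq 0}\,dr\big) > 0$.

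The variance lower bound is the technical heart. Fix $M > 2$ and consider the disjoint $\tilde B$-events $G_\pm := \{\pm\tilde B_r \geq M \text{ for all } r \in [1/8,7/8]\}$; each has universal positive probability $p_M$ depending only on $M$ (and not on $(\tilde u, \tilde v)$), and since $|\tilde u + r\tilde v| \leq 2$ one has $\tilde X_r > 0$ on $[1/8,7/8]$ on $G_+$ (forcing $\int_0^1 1_{\tilde X_r \geq 0}\,dr \geq 3/4$) and $\tilde X_r < 0$ on $[1/8,7/8]$ on $G_-$ (forcing the occupation time $\leq 1/4$); a Chebyshev argument against any fixed candidate mean then yields $c_0 \geq p_M/16$. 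Integrating, for $t \leq \delta^2/16$,
\begin{flalign*}
\alpha^2 \EE[1_A (I^X - I^Y)^2] \geq 2\alpha^2 p_0 c_0 \, t^2 \mathbb{P}(E) - 8\alpha^2 p_0 \, t^2 e^{-\delta^2/(2t)},
\end{flalign*}
and the exponential error absorbs into $D e^{-d/t}$ for any $d < \delta^2/2$. For $t \in (\delta^2/16, 1]$ the claim becomes trivial after enlarging $D$ so that $D e^{-d/t} \geq c t^2$ on that range. The uniform variance lower bound is the only delicate point; everything else is reflection-principle and scaling bookkeeping.
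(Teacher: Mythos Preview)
Your argument is correct and takes a genuinely different route from the paper's.

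The paper expands the left side as a covariance integral $\varphi_{\mu_{disc}}(r,s,u,v)$ and, using the step-function structure, writes it explicitly as a double sum
\[
\varphi_{\mu_{disc}}(r,s,u,v)=\sum_{i,j} c_{i,j}^{disc}\int_0^{\rho}\frac{1}{2\pi}e^{-a_i^2/2}\,\frac{1}{\sqrt{1-x^2}}\,e^{-(b_j-a_ix)^2/(2(1-x^2))}\,dx
\]
over all pairs of jump points. The diagonal term $i=j=i^*$ is then bounded below by citing Lemma~3 of \cite{MGY21}, while the off-diagonal terms $i\ne j$ are controlled by splitting the $v$-integral at $|v|\le \sigma/(4t)$ and showing, via a short contradiction using the minimum gap $\sigma$, that $a_i^2+b_j^2\ge d_1/t$ on the bounded part.

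You instead \emph{localize}: restrict to the event $A$ that both bridges stay within $(\xi_{i^*}-\delta,\xi_{i^*}+\delta)$, so that the remainder $r$ cancels and the problem reduces to a single jump; then rescale to a unit Brownian bridge and prove a uniform lower bound for the variance of its occupation time above zero by a direct support/Chebyshev argument. The contribution of the other jumps is absorbed into $\mathbb{P}(A^c)$, handled by the reflection principle for bridges. This is more self-contained and probabilistic, avoiding both the explicit Gaussian covariance formula and the external citation; the paper's analytic approach, on the other hand, tracks all pairs $(i,j)$ simultaneously and would adapt more directly if one wanted sharper dependence on the jump sizes.

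One cosmetic slip: where you write ``Brownian motions on $[0,t]$ from $u$ to $u+tv$'' you of course mean Brownian bridges, and you use them as such throughout (scaling, reflection principle). This does not affect the argument.
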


\begin{proof}
Similar to the proof of Lemma 3 in~\cite{MGY21} it holds
\begin{flalign}
\begin{aligned}
\EE &|\int_0^t (\mu_{disc}(U+sV+B_s) - \mu_{disc}(U+sV+ B'_s)) \: ds|^2 
\\&= 2 \int_{\mathbb{R}} \int_\mathbb{R} \int_0^t \int_0^t \varphi_{\mu_{disc}}(r,s,u,v) \: ds \: dr \: \mathbb{P}^V(dv) \: \mathbb{P}^U(du)
\end{aligned} \label{eqn:RewritingExpectedValueWithVarphi}
\end{flalign}
where we set for $r,s \in [0,t]$ and $(u,v) \in \mathbb{R}^2$
\begin{flalign*}
\varphi_{\mu_{disc}}(r,s,u,v) := &\EE [\mu_{disc}(B_r + u + rv)\mu_{disc}(B_s + u + sv)] 
\\&  - \EE [\mu_{disc}(B_r + u + rv)]\EE[\mu_{disc}(B_s + u + sv)].
\end{flalign*}
Now let $u,v \in \mathbb{R}$, $r,s\in (0,t)$ with $r \neq s$ and let us define $f_{disc},g_{disc} \colon \mathbb{R} \rightarrow \mathbb{R}$ by
\begin{flalign*}
f_{disc}(x) = \mu_{disc}(\frac{\sqrt{r(t-r)}}{\sqrt{t}}x + u + rv), \: g_{disc}(x) = \mu_{disc}(\frac{\sqrt{s (t-s)}}{\sqrt{t}}x + u + sv), \quad x \in \mathbb{R}.
\end{flalign*}
We put
\\\centerline{$Z := \frac{\sqrt{t}}{\sqrt{r(t-r)}}B_r, \: Y := \frac{\sqrt{t}}{\sqrt{s(t-s)}}B_s$}
as well as
\\\centerline{$a_i := a_i(r,s,u,v) := (\xi_i - u - rv) \frac{\sqrt{t}}{\sqrt{r(t-r)}}, \: b_j := b_j(r,s,u,v) := (\xi_j - u - sv) \frac{\sqrt{t}}{\sqrt{s(t-s)}}$}
for $i,j \in \{1,\dots, k\}$ and obtain with the covariance function of the Brownian bridge
\begin{flalign*}
\rho := \rho(r,s)  &:= \EE [\frac{\sqrt{t}}{\sqrt{s(t-s)}}B_s\frac{\sqrt{t}}{\sqrt{r(t-r)}}B_r] =\frac{(t - \max(r,s))\min(r,s)}{\sqrt{s(t-s)}\sqrt{r(t-r)}} \in (0,1).
\end{flalign*}
Following the arguments of the proof of Lemma 17 in~\cite{MGY21} for step functions instead of monotone functions and observing that $f_{disc}(a_i +)= \mu_{disc}(\xi_i +)$, $f_{disc}(a_i -) =  \mu_{disc}(\xi_i -)$, $g_{disc}(b_j+) = \mu_{disc}(\xi_j+)$ and $g_{disc}(b_j -) = \mu_{disc}(\xi_j -)$ for $i,j \in \{1, \dots, k\}$, we obtain with $c_{i,j}^{disc} := (\mu_{disc}(\xi_i +) - \mu_{disc}(\xi_i -))(\mu_{disc}(\xi_j+)- \mu_{disc}(\xi_j -)) $ for $i,j \in \{1, \dots, k\}$
\begin{flalign*}
\varphi_{\mu_{disc}}(r,s,u,v) = \sum_{i=1}^k  \sum_{j = 1}^k c_{i,j}^{disc} \int_0^\rho \frac{1}{\sqrt{2\pi}}e^{- \frac{a_i^2}{2}} \cdot \frac{1}{\sqrt{2\pi(1-x^2)}} e^{- \frac{(b_j-a_ix)^2}{2(1-x^2)}}
 \: dx.
\end{flalign*} 
The non-negativity of the integrand thus yields
\begin{flalign}
\varphi_{\mu_{disc}}&(r,s,u,v) \notag
\\&\geq (\mu_{disc}(\xi_{i*} +) - \mu_{disc}(\xi_{i*} -))^2 \int_0^\rho \frac{1}{2\pi}e^{- \frac{a_{i*}^2}{2}} \cdot \frac{1}{\sqrt{(1-x^2)}} e^{- \frac{(b_{i*}-a_{i*}x)^2}{2(1-x^2)}} \: dx \label{eqn:LowerBoundDiscPoint}
\\&\quad 
+\sum_{i=1}^k  \sum_{\substack{j=1\\j \neq i}}^k c_{i,j}^{disc} \cdot \int_0^\rho \frac{1}{2\pi}e^{- \frac{a_i^2}{2}} \cdot \frac{1}{\sqrt{(1-x^2)}} e^{- \frac{(b_j-a_ix)^2}{2(1-x^2)}}\: dx.\label{eqn:LowerBoundUnequalPoints}
\end{flalign}
The remaining part of the proof will be divided into two steps. At first, we note that it can be shown in a similar fashion as in Lemma 3 of~\cite{MGY21} that there exists a constant $c_1 > 0$ such that
\begin{flalign}
\begin{aligned}
\int_{\mathbb{R}} \int_\mathbb{R}&\int_0^t \int_0^t \int_0^{\rho(r,s)} (\frac{1}{\sqrt{2\pi}}e^{- \frac{a_{i*}^2}{2}} \cdot \frac{1}{\sqrt{2\pi(1-x^2)}} e^{- \frac{(b_{i*}-a_{i*}x)^2}{2(1-x^2)}}) ((r,s,u,v))\: 
 \: dx \: ds \: dr\mathbb{P}^V(dv) \: \mathbb{P}^U(du) 
\\&\geq c_1 t^2\mathbb{P}(U \in [\xi_{i*}, \xi_{i*} + \sqrt{t}]) \mathbb{P}(V \in [0, 1 \slash \sqrt{t}]).\
\end{aligned}\label{eqn:lemmaBoundsForSquaredIntegral:LowerBound:disc}
\end{flalign}
Now we have $ (\mu_{disc}(\xi_{i*} +) - \mu_{disc}(\xi_{i*} -))^2 > 0$ and exploiting the fact that it holds $V \sim N(0, \frac{1}{t})$ shows for $N \sim N(0,1)$ that
\begin{flalign*}
\mathbb{P}(V \in [0, 1 \slash \sqrt{t}]) = \mathbb{P}(N \in [0,1]) > 0.
\end{flalign*} 
Combining these observations with \eqref{eqn:LowerBoundDiscPoint} yields the left expression in the lower bound \eqref{eqn:LowerBoundDiscPart}. Subsequently, we will derive the right expression in \eqref{eqn:LowerBoundDiscPart} by using \eqref{eqn:LowerBoundUnequalPoints}. For this, we will prove the existence of constants $D,d > 0$ such that  
\begin{flalign*}
\sum_{i=1}^k  \sum_{\substack{j=1\\j \neq i}}^k |c_{i,j}^{disc}| \cdot \int_{\mathbb{R}} \int_\mathbb{R} \int_0^t \int_0^t \int_0^{\rho(r,s)} \psi_{i,j}(r,s,u,v,x)\: dx\: ds \: dr \: \mathbb{P}^V(dv) \: \mathbb{P}^U(du) \leq  D e^{- d \slash t}
\end{flalign*}
where we set for the simplicity of notations for $i,j \in \{1, \dots, k \}$ and $x \in [0,1)$
\\\centerline{$\psi_{i,j}(r,s,u,v,x) := \frac{1}{2\pi}e^{- \frac{(a_i(r,s,u,v))^2}{2}} \cdot \frac{1}{\sqrt{(1-x^2)}} e^{- \frac{(b_j(r,s,u,v)-a_i(r,s,u,v)x)^2}{2(1-x^2)}}$.}
A combination of \eqref{eqn:RewritingExpectedValueWithVarphi} and \eqref{eqn:LowerBoundUnequalPoints} then shows the claim.
If it holds $k=1$, then the constants $D,d > 0$ trivially exist. So let us assume $k \geq 2$ and let $\sigma := \min \{|\xi_i - \xi_j| : i,j \in \{1, \dots, k\}, i \neq j\}$ be the minimal distance of two possible jump positions. Furthermore, let $i,j \in \{1, \dots, k\}$ with $i \neq j$. We split the above iterated integral into two parts and we will treat the new expressions then separately. It holds
\begin{flalign}
\int_{\mathbb{R}} &\int_\mathbb{R} \int_0^t \int_0^t \int_0^{\rho(r,s)} \psi_{i,j}(r,s,u,v,x)
 \: dx\: ds \: dr \: \mathbb{P}^V(dv) \: \mathbb{P}^U(du)\notag
\\ =& \int_{\mathbb{R}} \int_\mathcal{S} \int_0^t \int_0^t \int_0^{\rho(r,s)}  \psi_{i,j}(r,s,u,v,x)
 \: dx\: ds \: dr \: \mathbb{P}^V(dv) \: \mathbb{P}^U(du) \label{eqn:IteratedIntegralS}
\\&+\int_{\mathbb{R}} \int_{\mathbb{R}\backslash \mathcal{S}} \int_0^t \int_0^t \int_0^{\rho(r,s)}  \psi_{i,j}(r,s,u,v,x)
 \: dx\: ds \: dr \: \mathbb{P}^V(dv) \: \mathbb{P}^U(du),\label{eqn:IteratedIntegralSComplement}
\end{flalign}
where we set $\mathcal{S}:= \{ v \in \mathbb{R} : |v| \leq \frac{\sigma}{4t}\}$. As we will see, we can bound the integral for $v \in \mathcal{S}$ in a suitable way. For the case $v \notin \mathcal{S}$ we will exploit the assumption $V \sim N(0, \frac{1}{t})$ to bound the probability of the event $V \in \mathbb{R}\backslash \mathcal{S}$.
We now bound \eqref{eqn:IteratedIntegralSComplement}. Put $N := \sqrt{t} V$. Then it holds  $N \sim N(0, 1)$ and due to an elementary Gaussian tail bound we have 
\begin{flalign*}
\mathbb{P}(V \notin \mathcal{S}) = \mathbb{P}(|V| > \frac{\sigma}{4t}) = \mathbb{P}(|N| > \frac{\sigma}{4\sqrt{t}}) \leq 2e^{- \frac{\sigma^2}{32t}}.
\end{flalign*}
Therewith, we obtain 
\begin{flalign*}
&\int_{\mathbb{R}} \int_{\mathbb{R}\backslash \mathcal{S}} \int_0^t \int_0^t \int_0^{\rho(r,s)}  \psi_{i,j}(r,s,u,v,x)
 \: dx\: ds \: dr \: \mathbb{P}^V(dv) \: \mathbb{P}^U(du)
\\&\leq  \int_{\mathbb{R}} \int_{\mathbb{R}\backslash \mathcal{S}} \int_0^1 \int_0^1 \int_0^1 \frac{1}{2\pi} \cdot \frac{1}{\sqrt{(1-x^2)}}
 \: dx\: ds \: dr \: \mathbb{P}^V(dv) \: \mathbb{P}^U(du) = \frac{\arcsin(1)}{2\pi} \cdot \mathbb{P}(V \notin \mathcal{S})
 \leq e^{- \frac{\sigma^2}{32t}}.
\end{flalign*}
This upper bound for the expression in \eqref{eqn:IteratedIntegralSComplement} is already of the desired form. Finally, we show the corresponding upper bound for the expression in \eqref{eqn:IteratedIntegralS}. We recall that it holds $i \neq j$ and that we have defined for fixed $u\in \mathbb{R}, v \in \mathcal{S}$ and $r,s\in (0,t)$ with $r \neq s$ 
\\\centerline{$\rho = \rho(r,s), \:a_i = a_i(r,s,u,v) = (\xi_i - u - rv) \frac{\sqrt{t}}{\sqrt{r(t-r)}}, \: b_j = b_j(r,s,u,v) = (\xi_j - u - sv) \frac{\sqrt{t}}{\sqrt{s(t-s)}}$.}
Now, let us estimate the integrand in \eqref{eqn:IteratedIntegralS} and obtain for $x \in [0,\rho)$
\begin{flalign}
\begin{aligned}
&\frac{1}{2\pi}e^{- \frac{a_i^2}{2}} \cdot \frac{1}{\sqrt{(1-x^2)}} e^{- \frac{(b_j-a_ix)^2}{2(1-x^2)}} =  \frac{1}{2\pi} \cdot \frac{1}{\sqrt{(1-x^2)}} e^{ \frac{-b_j^2 + 2 b_j a_ix - a_i^2}{2(1-x^2)}}
\\& \leq \frac{1}{2\pi} \cdot \frac{1}{\sqrt{(1-x^2)}} e^{ \frac{-b_j^2 + (b_j^2 + a_i^2)x - a_i^2}{2(1-x^2)}}  
 = \frac{1}{2\pi} \cdot \frac{1}{\sqrt{(1-x^2)}} e^{ -\frac{(b_j^2 + a_i^2)}{2(1+x)}} \leq \frac{1}{2\pi} \cdot \frac{1}{\sqrt{(1-x^2)}} e^{ -\frac{b_j^2 + a_i^2}{4}}.
\end{aligned}\label{eqn:lemma:LowerBoundStepFct:intermediateEstimateIntegrand}
\end{flalign}
Next, we show the existence of a constant $d_1 > 0$ sucht that $b_j^2 + a_i^2 \geq \frac{d_1}{t}$ and such that $d_1$ only depends on $\sigma$. Optimizing yields immediately
\begin{flalign*}
\frac{t}{r(t-r)} \geq \frac{t}{\frac{t}{2}(t - \frac{t}{2})} = \frac{4}{t} \text{\quad and \quad} \frac{t}{s(t-s)} \geq \frac{4}{t}.
\end{flalign*}
Therefore, it holds 
\begin{flalign*}
b_j^2 + a_i^2 \geq ((\xi_i - u - rv)^2 + (\xi_j - u - sv)^2) \cdot \frac{4}{t}.
\end{flalign*}
We now show the validity of $(\xi_i - u - rv)^2 + (\xi_j - u - sv)^2 \geq \frac{\sigma^2}{16}$ by contradiction. For this, let us assume that it holds $|\xi_i - u - rv| \leq \frac{\sigma}{4}$ and $|\xi_j - u - sv| \leq \frac{\sigma}{4}$. Then it follows 
\begin{flalign*}
|(\xi_i - \xi_j) - (r-s)v| = |(\xi_i - u - rv) - (\xi_j - u -sv) | \leq |\xi_i - u - rv| + |\xi_j - u -sv| \leq \frac{\sigma}{2}.
\end{flalign*}
In consideration of the definition of $\sigma$ it thus holds
\begin{flalign*}
|r-s| |v| = |(r-s)v| \geq |\xi_i - \xi_j| - \frac{\sigma}{2} \geq \frac{\sigma}{2}
\end{flalign*}
and due to $|r-s| \leq t$ we obtain
\begin{flalign*}
t |v| \geq \frac{\sigma}{2} \text{\quad and \quad} |v| \geq \frac{\sigma}{2t}, \quad \text{respectively}.
\end{flalign*}
The last inequality implies $v \notin \mathcal{S}$.  Hence, we have shown that there exists a constant $d_1>0$ such that $b_j^2 + a_i^2 \geq \frac{d_1}{t}$. Combining this with \eqref{eqn:lemma:LowerBoundStepFct:intermediateEstimateIntegrand} yields
\begin{flalign*}
\frac{1}{2\pi}e^{- \frac{a_i^2}{2}} \cdot \frac{1}{\sqrt{(1-x^2)}} e^{- \frac{(b_j-a_ix)^2}{2(1-x^2)}}\leq \frac{1}{2\pi} \cdot \frac{1}{\sqrt{(1-x^2)}} e^{ -\frac{d_1}{4t}}.
\end{flalign*}
Plugging this into \eqref{eqn:IteratedIntegralS} finishes the proof.
\end{proof}

\begin{lemma}\label{lemma:lowerBound:squaredIntegral:incremetsSimplified}
Let $\mu \colon \mathbb{R} \rightarrow \mathbb{R}$ be a function satisfying $(\mu 1)$ and $(\mu 3)$. Let $x_0 \in \mathbb{R}$ and $X,\widetilde{X} \colon [0,1] \times \Omega \rightarrow \mathbb{R}$ be strong solutions of the SDE \eqref{eqn:basicSDE} on the time interval $[0,1]$ with initial value $x_0$ and driving Brownian motion $W$ and $\widetilde{W}$, respectively. Then there exist constants $c, C > 0$ such that for all $i \in \{1, \dots, n\}$ with $t_i > \frac{1}{2}$ it holds
\begin{flalign*}
\EE | \int_{t_{i-1}}^{t_i} (\mu(X_{t_{i-1}} + W_s - W_{t_{i-1}}) - \mu(X_{t_{i-1}} + \widetilde{W}_s - \widetilde{W}_{t_{i-1}})) \: ds |^2 \geq c(t_i - t_{i-1})^{5 \slash 2} - \frac{C}{n^3}.
\end{flalign*}
\end{lemma}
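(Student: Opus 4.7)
The strategy is to split $\mu$ via Lemma~\ref{lemma:functionalsWelldefinedMuOfLinearGrowth} as $\mu = \mu_{cont} + \mu_{disc} + \mu_{point}$ with $\mu_{disc} = \sum_{j=1}^k \alpha_j 1_{[\xi_j, \infty)}$ and $\mu_{point} = \sum_{j=1}^k \gamma_j 1_{\{\xi_j\}}$, and to handle the Lipschitz and step parts separately; the $\mu_{point}$-integral vanishes almost surely because $s \mapsto X_{t_{i-1}} + W_s - W_{t_{i-1}}$ is conditionally a Brownian motion whose level sets have zero Lebesgue measure (and the same for $\widetilde{W}$). Writing $I_{cont}$ and $I_{disc}$ for the two surviving pieces, the elementary inequality $(a + b)^2 \geq \tfrac{1}{2} b^2 - a^2$ reduces the task to a $(t_i - t_{i-1})^{5/2}$ lower bound on $\EE|I_{disc}|^2$ together with a $(t_i - t_{i-1})^3 = O(n^{-3})$ upper bound on $\EE|I_{cont}|^2$.

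For $I_{cont}$, Lipschitz-continuity of $\mu_{cont}$ together with $W_{t_{i-1}} = \widetilde{W}_{t_{i-1}}$ bounds the integrand by $L|W_s - \widetilde{W}_s|$; on $[t_{i-1}, t_i]$ the process $W - \widetilde{W}$ is the difference of two independent Brownian bridges with identical endpoints, so its variance at $s$ equals $2(s-t_{i-1})(t_i - s)/(t_i - t_{i-1})$, and Cauchy--Schwarz delivers $\EE|I_{cont}|^2 \leq \tfrac{L^2}{3}(t_i - t_{i-1})^3$, which is at most $8L^2/(3n^3)$.

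For $I_{disc}$, I would cast the two Brownian increments in canonical form: set $V := (W_{t_i} - W_{t_{i-1}})/(t_i - t_{i-1})$, which is $N(0, 1/(t_i - t_{i-1}))$, and let $B, \widetilde{B}$ denote the Brownian-bridge components of $W$ and $\widetilde{W}$ on $[t_{i-1}, t_i]$. Because $W$ and $\widetilde{W}$ coincide at the grid points, they carry the \emph{same} slope $V$ and independent bridges, so the substitution $s = t_{i-1} + r$ puts the integral into the form required by Lemma~\ref{lemma:lowerBoundForSquaredIntegral} with $t = t_i - t_{i-1}$, $U = X_{t_{i-1}}$, and $B, B' = B, \widetilde{B}$. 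The requisite joint independence of $(U, V, B, \widetilde{B})$ follows from $X_{t_{i-1}}$ being $\sigma((W_u)_{u \le t_{i-1}})$-measurable, the standard independence of the bridge and slope of $W$ on $[t_{i-1}, t_i]$ from $(W_u)_{u \le t_{i-1}}$, and the a priori independence of $\widetilde{B}$ from $W$. Lemma~\ref{lemma:lowerBoundForSquaredIntegral} then yields
$$\EE|I_{disc}|^2 \geq c_0 (t_i - t_{i-1})^2 \, \mathbb{P}\bigl(X_{t_{i-1}} \in [\xi_{i^*}, \xi_{i^*} + \sqrt{t_i - t_{i-1}}]\bigr) - D e^{-d/(t_i - t_{i-1})}.$$

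To finish, $n \geq 16$ and $t_i > 1/2$ guarantee $t_{i-1} > 3/8$, so Lemma~\ref{lemma:boundProbability:SolutionIsInInterval} applied with $\tau = 3/8$ and any $M > |\xi_{i^*}| + 1$ bounds the above probability below by $c_1 \sqrt{t_i - t_{i-1}}$, producing the desired $(t_i - t_{i-1})^{5/2}$ main term; the exponential residual is $\leq De^{-dn/2}$ and is therefore absorbed into a $C/n^3$ remainder. The main obstacle I expect is in the bookkeeping for the $I_{disc}$ step: one must carefully identify the canonical decomposition of $W$ and $\widetilde{W}$ on each subinterval and verify the independence hypotheses of Lemma~\ref{lemma:lowerBoundForSquaredIntegral}. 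This is precisely the point at which monotonicity of $\mu$ can be dispensed with, since Lemma~\ref{lemma:lowerBoundForSquaredIntegral} accepts any step function satisfying $(\mu 1)$ and $(\mu 3)$ rather than only monotone ones.
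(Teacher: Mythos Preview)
Your proposal is correct and follows essentially the same route as the paper: decompose $\mu$ via Lemma~\ref{lemma:functionalsWelldefinedMuOfLinearGrowth}, bound the Lipschitz piece by $O((t_i-t_{i-1})^3)$, and feed the step-function piece into Lemma~\ref{lemma:lowerBoundForSquaredIntegral} after identifying $U=X_{t_{i-1}}$, $V=(W_{t_i}-W_{t_{i-1}})/(t_i-t_{i-1})$ and the two bridges, then convert the resulting probability into $\sqrt{t_i-t_{i-1}}$ via Lemma~\ref{lemma:boundProbability:SolutionIsInInterval}. The only cosmetic differences are that the paper absorbs the point-mass part $\sum_j \gamma_j 1_{\{\xi_j\}}$ into $\mu_{disc}$ rather than treating it separately, and obtains the required independence by invoking Lemma~\ref{lemma:representationOfXWithF} instead of the direct measurability argument you give.
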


\begin{proof}
Let $i \in \{1, \dots, n\}$ with $t_i > \frac{1}{2}$. First of all, according to Lemma \ref{lemma:functionalsWelldefinedMuOfLinearGrowth} $\mu$ can be written as $\mu = \mu_{cont} + \mu_{disc}$ where $\mu_{cont}$ is a Lipschitz-continuous function and $\mu_{disc}$ is a step function satisfying $(\mu 1)$ and $(\mu 3)$. Using $(a+b)^2 \leq 2 (a^2 + b^2)$ for $a,b \in \mathbb{R}$ we thus obtain
\begin{flalign*}
\EE &|\int_{t_{i-1}}^{t_i} (\mu(X_{t_{i-1}} + W_s - W_{t_{i-1}}) - \mu(X_{t_{i-1}} + \widetilde{W}_s - \widetilde{W}_{t_{i-1}})) \: ds |^2
\\& \geq \frac{1}{2} \EE |\int_{t_{i-1}}^{t_i} (\mu_{disc}(X_{t_{i-1}} + W_s - W_{t_{i-1}}) - \mu_{disc}(X_{t_{i-1}} + \widetilde{W}_s - \widetilde{W}_{t_{i-1}})) \: ds |^2
\\& \quad - \EE |\int_{t_{i-1}}^{t_i} (\mu_{cont}(X_{t_{i-1}} + W_s - W_{t_{i-1}}) - \mu_{cont}(X_{t_{i-1}} + \widetilde{W}_s - \widetilde{W}_{t_{i-1}})) \: ds |^2.
\end{flalign*}
We will later show the existence of constants $c_1, c_2, C_1, C_2 > 0$ which are independent of $i$ and which satisfy for $V := (V_t = W_{t_{i-1} + t} - W_{t_{i-1}})_{t \in [0, t_i - t_{i-1}]}$ and $\widetilde{V} := (\widetilde{V}_t = \widetilde{W}_{t_{i-1} + t} - \widetilde{W}_{t_{i-1}})_{t \in [0, t_i - t_{i-1}]}$
\begin{flalign}
\EE &| \int_{0}^{t_i - t_{i-1}} (\mu_{cont}(X_{t_{i-1}} + V_s) - \mu_{cont}(X_{t_{i-1}} + \widetilde{V}_s)) \: ds |^2 \leq \frac{C_1}{n^3}, \label{eqn:lowerBound:incremets:contPrart}
\\\EE &| \int_{0}^{t_i - t_{i-1}} (\mu_{disc}(X_{t_{i-1}} + V_s) - \mu_{disc}(X_{t_{i-1}} + \widetilde{V}_s)) \: ds |^2 \geq c_1 (t_i - t_{i-1})^{5 \slash 2} - C_2 e^{-c_2n}. \label{eqn:lowerBound:incremets:discPrart}&
\end{flalign}
Therewith, the claim follows immediately. Let us start with the derivation of \eqref{eqn:lowerBound:incremets:contPrart}. Therefore, let $L_{\mu_{cont}} > 0$ be the Lipschitz-constant of $\mu_{cont}$. Then an application of the H\"older-inequality and exploiting the Lipschitz-continuity of $\mu_{cont}$ yields
\begin{flalign*}
\EE & |\int_{0}^{t_i - t_{i-1}} (\mu_{cont}(X_{t_{i-1}} + V_s) - \mu_{cont}(X_{t_{i-1}} + \widetilde{V}_s)) \: ds |^2 
\\&\leq (L_{\mu_{cont}})^2 \cdot (t_i - t_{i-1}) \cdot \int_{0}^{t_i - t_{i-1}} \EE |V_s - \widetilde{V}_s|^2 \: ds
\leq (L_{\mu_{cont}})^2 \cdot (t_i - t_{i-1})^{3}.
\end{flalign*}
Due to $t_i - t_{i-1} \leq \frac{2}{n}$, the existence of $C_1$ in \eqref{eqn:lowerBound:incremets:contPrart} is shown. We will proceed with the bound in \eqref{eqn:lowerBound:incremets:discPrart} and we will use similar arguments as in the proof of Lemma 15 in~\cite{MGY21}. Let $i*\in \{1,\dots, k\}$ with $\mu_{disc}(\xi_{i*} +) \neq \mu_{disc}(\xi_{i*} -)$ as in $(\mu 3)$. It holds
\begin{flalign*}
\int_{0}^{t_i - t_{i-1}} &(\mu_{disc}(X_{t_{i-1}} + V_s) - \mu_{disc}(X_{t_{i-1}} + \widetilde{V}_s)) \: ds 
\\&= \int_{0}^{t_i - t_{i-1}} (\mu_{disc}(X_{t_{i-1}} + \frac{s}{t_i - t_{i-1}}(W_{t_i} - W_{t_{i-1}}) + B_{t_{i-1} + s}) 
\\& \quad \quad\quad\quad\quad\quad- \mu_{disc}(X_{t_{i-1}} + \frac{s}{t_i - t_{i-1}}(W_{t_i} - W_{t_{i-1}}) + \widetilde{B}_{t_{i-1} + s})) \: ds.
\end{flalign*}
Since $(X_{t_{i-1}} , \widetilde{X}_{t_{i-1}})$ and $((W_s - W_{t_{i-1}})_{s \in [t_{i-1}, t_i]}, (\widetilde{W}_s - \widetilde{W}_{t_{i-1}})_{s \in [t_{i-1}, t_i]})$ are independent due to Lemma \ref{lemma:representationOfXWithF}, we obtain the independence of
\\\centerline{$X_{t_{i-1}}, W_{t_i} - W_{t_{i-1}}, (B_{t_{i-1} + s})_{s \in [0, t_i - t_{i-1}]}, (\widetilde{B}_{t_{i-1} + s})_{s \in [0, t_i - t_{i-1}]}$.}
Hence, due to Lemma \ref{lemma:lowerBoundForSquaredIntegral} there exist constants $c_3, c_4, C_3 > 0$ such that 
\begin{flalign*}
\EE & |\int_{t_{i-1}}^{t_i} (\mu_{disc}(X_{t_{i-1}} + W_s - W_{t_{i-1}}) - \mu_{disc}(X_{t_{i-1}} + \widetilde{W}_s - \widetilde{W}_{t_{i-1}})) \: ds |^2 
\\&\geq c_3 (t_i - t_{i-1})^2\mathbb{P}(X_{t_{i-1}} \in [\xi_{i*}, \xi_{i*} + \sqrt{t_i - t_{i-1}}])- C_3 e^{- c_4 \slash (t_i - t_{i-1})}.
\end{flalign*}
Because of $t_i - t_{i-1} \leq \frac{2}{n}, t_i > \frac{1}{2}$ and with Lemma \ref{lemma:functionalsWelldefinedMuOfLinearGrowth} as well as Lemma \ref{lemma:boundProbability:SolutionIsInInterval}, the existence of the constants $c_1, c_2, C_2$ in \eqref{eqn:lowerBound:incremets:discPrart} follows.
\end{proof}

\begin{proof}[Proof of Theorem \ref{theorem:lowerBound:SDEs:discDrift}]
Let $L_{G_{\mu}} > 0$ be the Lipschitz-constant of $G_{\mu}$ which exists due to \mbox{Lemma \ref{lemma:basicPropertiesOfG}}. First of all, we note that it holds
\begin{flalign}
\EE|X_1 - \widetilde{X}_1|^2 \geq (L_{G_{\mu}})^{-2} \EE|G_{\mu}(X_1) - G_{\mu}(\widetilde{X}_1)|^2. \label{eqn:lowerBoundFinalTimepointWithTransform}
\end{flalign}
Now let us define for $i \in \{1, \dots, n\}$
\begin{flalign*}
\Delta_i := \EE|G_{\mu}(X_{t_i}) - G_{\mu}(\widetilde{X}_{t_i})|^2.
\end{flalign*}
Let $i \in \{1, \dots, n\}$ with $t_i > \frac{1}{2}$. Then it holds 
\begin{flalign}
\begin{aligned}
\Delta_i &= \EE \big[|G_{\mu}(X_{t_{i-1}}) - G_{\mu}(\widetilde{X}_{t_{i-1}}) + ((G_{\mu}(X_{t_i}) - G_{\mu}(X_{t_{i-1}})) - (G_{\mu}(\widetilde{X}_{t_i}) - G_{\mu}(\widetilde{X}_{t_{i-1}})))|^2 \big] 
\\&= \Delta_{i-1} + 2m_i + d_i 
\end{aligned}\label{eqn:rewriteDeltaWithMandD}
\end{flalign}
with
\begin{flalign*}
m_i := \EE\big[(G_{\mu}(X_{t_{i-1}}) - G_{\mu}(\widetilde{X}_{t_{i-1}})) \cdot ((G_{\mu}(X_{t_i}) - G_{\mu}(X_{t_{i-1}})) - (G_{\mu}(\widetilde{X}_{t_i}) - G_{\mu}(\widetilde{X}_{t_{i-1}})))\big]
\end{flalign*}
and
\begin{flalign*} 
d_i := \EE|(G_{\mu}(X_{t_i}) - G_{\mu}(X_{t_{i-1}})) - (G_{\mu}(\widetilde{X}_{t_i}) - G_{\mu}(\widetilde{X}_{t_{i-1}}))|^2.
\end{flalign*}
Due to Lemma \ref{lemma:boundForMixedTerms}, there exists a constant $C_1 > 0$ with
\begin{flalign}
|m_i| \leq \frac{C_1}{n} \Delta_{i-1}. \label{eqn:lowerBound:mi}
\end{flalign}
A combination of Lemma \ref{lemma:lowerBound:increments}, Lemma \ref{lemma:lowerBound:problemReductionToDistanceOfTildeProcesses}, Lemma \ref{lemma:lowerBound:dropRecursionIntegralOfSDE} and Lemma \ref{lemma:lowerBound:squaredIntegral:incremetsSimplified} yields the existence of constants $c_1, C_2, C_3 > 0$ with 
\begin{flalign}
\begin{aligned}
d_i &\geq c_1 (t_i - t_{i-1})^{5 \slash 2} - \frac{C_2}{n} \Delta_{i-1} - \frac{C_3}{n^{5 \slash 2 + 1 \slash 16}}.
\end{aligned}\label{eqn:lowerBound:di}
\end{flalign}
With \eqref{eqn:rewriteDeltaWithMandD}, \eqref{eqn:lowerBound:mi} and \eqref{eqn:lowerBound:di} we therefore obtain that there exists a constant $C_4> 0$ with
\begin{flalign*}
\Delta_i \geq (1- \frac{C_4}{n}) \Delta_{i-1} +  c_1 (t_i - t_{i-1})^{5 \slash 2} - \frac{C_3}{n^{5 \slash 2 + 1 \slash 16}}.
\end{flalign*}
Assume now $0 \leq \frac{C_4}{n} < 1$. Using the above estimate for $\Delta_i$ with $\Delta_{i-1}$ it follows inductively that for $j \in \{1, \dots, n\}$ with $t_j \geq \frac{1}{2}$ it holds
\begin{flalign*}
\Delta_n \geq (1- \frac{C_4}{n})^{n-j} \Delta_j + c_1 \sum_{i=j+1}^n(1- \frac{C_4}{n})^{n-i} (t_i - t_{i-1})^{5 \slash 2} - (n-j)\cdot \frac{C_3}{n^{5 \slash 2 + 1 \slash 16}}.
\end{flalign*}
Now take $r \in \{1, \dots, n\}$ with $t_r = \frac{1}{2}$. Then it holds
\begin{flalign*}
\EE|G_{\mu}(X_1) - G_{\mu}(\widetilde{X}_1)|^2 = \Delta_n \geq c_1(1- \frac{C_4}{n})^{n} \sum_{i=r+1}^n (t_i - t_{i-1})^{5 \slash 2} - n\cdot \frac{C_3}{n^{5 \slash 2 + 1 \slash 16}}.
\end{flalign*}
Since we know that $\lim_{n \rightarrow \infty} (1- \frac{C_4}{n})^{n} = e^{-C_4}$, we obtain for sufficiently large $n \in \mathbb{N}$
\begin{flalign}
\EE|G_{\mu}(X_1) - G_{\mu}(\widetilde{X}_1)|^2 = \Delta_n \geq \frac{c_1 e^{-C_4}}{2} \sum_{i=r+1}^n (t_i - t_{i-1})^{5 \slash 2} -  \frac{C_3}{n^{3 \slash 2 + 1 \slash 16}}. \label{eqn:lowerBoundWithSumOfIncremtensTi}
\end{flalign}
In the remaining part of the proof we will use ideas from the derivation of Lemma 16 in~\cite{MGY21}. An application of the H\"older-inequality with $p = \frac{5}{3}$ and $q= \frac{5}{2}$ yields 
\begin{flalign*}
\frac{1}{2} = \sum_{i=r+1}^{n}(t_i - t_{i-1}) \leq n^{3 \slash 5} \cdot \big( \sum_{i=r+1}^n (t_i - t_{i-1})^{5 \slash 2}\big)^{2 \slash 5}
\end{flalign*}
which can be rewritten as
\begin{flalign*}
\sum_{i=r+1}^n(t_i - t_{i-1})^{5 \slash 2} \geq \frac{1}{2^{5 \slash 2}n^{3 \slash 2}}.
\end{flalign*}
Plugging this into \eqref{eqn:lowerBoundWithSumOfIncremtensTi} shows that for sufficiently large $n \in \mathbb{N}$ it holds 
\begin{flalign*}
\EE|G_{\mu}(X_1) - G_{\mu}(\widetilde{X}_1)|^2 = \Delta_n \geq \frac{c_1 e^{-C_4}}{2^{7 \slash 2}} n^{- 3 \slash 2}   -  \frac{C_3}{n^{3 \slash 2 + 1 \slash 16}}.
\end{flalign*}
In view of \eqref{eqn:lowerBoundFinalTimepointWithTransform} we see that there exists a constant $c_2 > 0$ such that for all sufficiently large $n \in \mathbb{N}$ it holds
\begin{flalign*}
\EE |X_1 - \widetilde{X}_1|^2 \geq \frac{c_2}{n^{3 \slash 2}}.
\end{flalign*}
Now for $Z:= X_1 - \widetilde{X}_1$ an application of the H\"older-inequality shows that for all sufficiently large $n \in \mathbb{N}$ it holds
\begin{flalign*}
\frac{c_2}{n^{3 \slash 2}} \leq \EE |X_1 - \widetilde{X}_1|^2 \leq \big( \EE |X_1 - \widetilde{X}_1|\big)^{2 \slash 3} \cdot \big( \EE |X_1 - \widetilde{X}_1|^4\big)^{1 \slash 3}.
\end{flalign*}
Applying Lemma \ref{lemma:boundLPDistanceOfSolutionsAtInterpolation} with $p=4$ consequently yields the existence of a constant $C_5 > 0$ such that for all sufficiently large $n \in \mathbb{N}$ it holds
\begin{flalign*}
 \frac{c_2}{n^{3 \slash 2}} \leq \EE |X_1 - \widetilde{X}_1|^2 \leq \big( \EE |X_1 - \widetilde{X}_1|\big)^{2 \slash 3} \cdot C_5n^{-1}.
\end{flalign*}
Because of  
\begin{flalign*}
\inf_{\substack{t_1, \dots, t_m \in [0,1] \\g \colon \mathbb{R}^m \rightarrow \mathbb{R} \: measurable}} \EE |X_1 - g(W_{t_1}, \dots, W_{t_m})|  \geq \inf_{\substack{t_1, \dots, t_{m+1} \in [0,1] \\g \colon \mathbb{R}^{m+1} \rightarrow \mathbb{R} \: measurable}} \EE |X_1 - g(W_{t_1}, \dots, W_{t_{m+1}})| 
\end{flalign*}
for $m \in \mathbb{N}$ and Lemma \ref{lemma:lowerBound:infBoundedByTildeDistance} the claim follows.
\end{proof}

\subsection{Proof of Corollary \ref{cor:GeneralLowerBound}}\label{subsection:proofOfCorollary}
In this subsection we prove Corollary \ref{cor:GeneralLowerBound} which is basically an application of Lamperti's theorem.

\begin{proof}[Proof of Corollary \ref{cor:GeneralLowerBound}]
	We use the transformation 
	\begin{flalign*}
		\varphi: \mathbb{R} \rightarrow \mathbb{R}, \quad x \mapsto \int_0^x \frac{1}{\sigma(u)} \: du
	\end{flalign*}
	to apply Theorem \ref{theorem:lowerBound:SDEs:discDrift} later. Note that $\varphi$ is well-defined, bijective and Lipschitz-continuous because of $\inf_{x \in \mathbb{R}} |\sigma(x)| > 0$. Denoting the Lipschitz-constant of $\varphi$ by $L_\varphi$ we get
	\begin{flalign*}
		\inf_{\substack{t_1, \dots, t_n \in [0,1] \\g \colon \mathbb{R}^n \rightarrow \mathbb{R} \: measurable}} &\EE |X_1 - g(W_{t_1}, \dots, W_{t_n})|
		\geq L_\varphi^{-1} \inf_{\substack{t_1, \dots, t_n \in [0,1] \\g \colon \mathbb{R}^n \rightarrow \mathbb{R} \: measurable}} \EE |\varphi(X_1) - g(W_{t_1}, \dots, W_{t_n})|.
	\end{flalign*}
	Applying now the It\^{o}-formula yields that $Z= \varphi(X)$ is a strong solution of the SDE 
	\begin{flalign*}
		dZ_t &= \mu^\varphi(Z_t) \: dt + dW_t, \quad t\in [0,1],
		\\ Z_0 &= \varphi(x_0)
	\end{flalign*}
	with $\mu^\varphi = (\frac{\mu}{\sigma} - \frac{\sigma^\prime}{2}) \circ \varphi^{-1}$. 
	Using the assumptions of this corollary and the fact that the product of two bounded Lipschitz-continuous functions is again Lipschitz-continuous, one can show that $\mu^\varphi$ also satisfies $(\mu 1)$ and $(\mu 2)$. Moreover, $\mu^\varphi$ has the property $(\mu 3)$ because $(\mu 3)$ holds for $\mu$ and because of $\sigma \in C^3_b(\mathbb{R})$. An application of \mbox{Theorem \ref{theorem:lowerBound:SDEs:discDrift}} yields the result.
\end{proof}

\section*{Acknowledgement}
I want to express my gratitude to Thomas M\"uller-Gronbach and Larisa Yaroslavtseva for their encouragement and useful critiques of this article. I would also like to thank the referees for their careful reading and their suggestions.

\bibliographystyle{acm}
\bibliography{bibfile}

\end{document}